\Crefname{paragraph}{Section}{Sections}
\definecolor{by}{rgb}{0.74, 0.2, 0.64}
\crefname{theo}{theorem}{theorems}
\newcommand{\ensemblenombre}[1]{\mathbb{#1}}
\newcommand{\N}{\ensemblenombre{N}}
\newcommand{\R}{} 
\renewcommand{\R}{\ensemblenombre{R}}
\newcommand{\norme}[1]{\left\lVert#1\right\rVert}
\newcommand{\intdoble}[3]{\int_{#1}^{#2}\!\!\!\int_{#3}}
\newcommand{\dive}[1]{\mathrm{div}}
\newcommand{\ov}[1]{\overline{#1}}
\providecommand{\keywords}[1]{\noindent {\textit{Keywords:}} #1}
\theoremstyle{plain} 
\newtheorem{prop}{Proposition}[section] 
\newtheorem{theo}[prop]{Theorem}
\newtheorem{lem}[prop]{Lemma}
\newtheorem{cor}[prop]{Corollary}
\theoremstyle{definition}
\newtheorem{defi}[prop]{Definition}
\newtheorem{rmk}[prop]{Remark}
\def\dx{\,\textnormal{d}x}
\def\dt{\textnormal{d}t}
\def\d{\textnormal{d}}
\def\esp{{\mathbb{E}}}
\def\dom{\mathcal{D}}
\def\fil{\mathcal{F}}
\let\original@addcontentsline\addcontentsline
\newcommand{\dummy@addcontentsline}[3]{}
\newcommand{\DeactivateToc}{\let\addcontentsline\dummy@addcontentsline}
\newcommand{\ActivateToc}{\let\addcontentsline\original@addcontentsline}
\begin{document}

\title{Controllability results for stochastic coupled  systems of fourth- and second-order parabolic equations}
\author{V\'ictor Hern\'andez-Santamar\'ia \and Liliana Peralta}

\maketitle

\begin{abstract}
In this paper, we study some controllability and observability problems for stochastic systems coupling fourth- and second-order parabolic equations. The main goal is to control both equations with only one controller localized on the drift of the fourth-order equation. We analyze two cases: on one hand, we study the controllability of a linear backward system where the couplings are made only through first-order terms. The key point is to use suitable Carleman estimates for the heat equation and the fourth-order operator with the same weight to deduce an observability inequality for the adjoint system.  On the other hand, we study the controllability of a simplified nonlinear coupled model of forward equations. This case, which is well-known to be harder to solve, follows a methodology that has been introduced recently and relies on an adaptation of the well-known source term method in the stochastic setting together with a truncation procedure. This approach gives a new concept of controllability for stochastic systems. 
\end{abstract}
\keywords{Null controllability, observability, coupled systems, forward and backward linear stochastic parabolic equations, Carleman estimates.}
\small
\tableofcontents
\normalsize

\section{Introduction}\label{sec:intro}

The stabilized Kuramoto-Sivashinsky system was proposed in \cite{malomed} 
 as a model of front propagation in reaction-diffusion phenomena and combines dissipative features with dispersive ones. This system consists of a Kuramoto-Sivashinsky-KdV (KS-KdV) equation linearly coupled to an extra dissipative equation. More precisely, the model has the form
\begin{equation}\label{KS-H}
\begin{cases}
u_t  + \gamma u_{xxxx} + u_{xxx} + \lambda u_{xx} + uu_x =v_x, \\
v_t  - \Gamma v_{xx} + cv_x =u_x,
\end{cases}
\end{equation}
where  $\gamma,\lambda$ are positive coefficients accounting for the long-wave instability and the short-wave dissipation, respectively, $\Gamma>0$ is the dissipative parameter and $c\in\mathbb R\setminus\{0\}$ is the group-velocity mismatch between  wave modes.  

This model applies to the description of surface waves on multilayered liquid films and serves as a one-dimensional model for turbulence and wave propagation, see  \cite{malomed}  for a more detailed discussion.

The controllability of coupled systems like \eqref{KS-H} has attracted a lot of attention in the recent past (see for instance the survey \cite{AKBGBdT11} and the references within). One of the common features among such works is the goal of controlling as many equations with the fewest number of controls. Roughly speaking, it is more difficult to deduce control properties for coupled systems than for single equations. 

System \eqref{KS-H} has been studied from the controllability point of view in the deterministic setting in various papers. In \cite{cmp}, the authors address the boundary controllability when the control action is applied on both equations. Later, in \cite{cmp2}, it is proved that controllability with  a single control supported in an interior open subset of the domain and acting on the fourth-order equation can be achieved, while  \cite{cc16} studies  the analogous  problem but with an interior  control acting only on the heat equation. Finally, in \cite{CCM19}, the problem is addressed with a single boundary control but only for a linearization of the system. 

In this context, a natural question that arises is to what extent the controllability properties for the stochastic counterpart of \eqref{KS-H} hold. Seen individually, the fourth- and second-order equations that compose this system have been studied from the control point of view in several works. We refer to \cite{BRT03,TZ09,Liu14} for some of the most representative works about the controllability of stochastic heat and parabolic-type equations, while we refer to \cite{Gao15,Gao18} for some results about the linear stochastic fourth-order equation. Nevertheless, as a coupled system, to the best of the authors' knowledge, this kind of problem has not been studied yet in the literature.

\subsection{Statement of the main results}
In what follows, we fix $T>0$, $\dom=(0,1)$, and denote $\dom_0$ as any given nonempty open subset of $\dom$. We will denote $Q=\dom\times(0,T)$ and $\Sigma=\{0,1\}\times(0,T)$.

Let $(\Omega,\mathcal F, \{\mathcal F_t\}_{t\geq0},\mathbb P)$ be a complete filtered probability space on which a one-dimensional standard Brownian motion $\{W(t)\}_{t\geq 0}$ is defined such that $\{\mathcal F_t\}_{t\geq 0}$ is the natural filtration generated by $W(\cdot)$, augmented by all the $\mathbb P$-null sets in $\mathcal F$.  Let $X$ be a Banach space. For $p\in[1,\infty]$, we define the space
\begin{align*}
L^p_{\mathcal F}(0,T;X):=&\left\{\phi:\phi \text{ is an $X$-valued $\mathcal F_t$-adapted process}\right. \\
&\; \left.\text{on $[0,T]$, and $\phi\in L^p([0,T]\times\Omega;X)$}\right\},
\end{align*}
endowed with the canonical norm and we denote by $ L^2_{\mathcal F}(\Omega;C([0,T];X))$ the Banach space consisting of all $X$-valued $\mathcal F_t$-adapted continuous processes $\phi(\cdot)$ such that $E\left(\|\phi(\cdot)\|^2_{C([0,T];X)}\right)<\infty
$, also equipped with the canonical norm.

\subsubsection{Controllability of the backward system}

In the first part of the paper, we are interested in studying the null controllability for the following linear backward stochastic system
\begin{equation}\label{backward_KS-H}
\begin{cases}
\d y-(\gamma y_{xxxx}-y_{xxx}+y_{xx})\dt=(z_x-d_1 Y-d_2 Z+\chi_{\dom_0}h)\dt+Y\d W(t) &\text{in }Q, \\
\d z+\Gamma z_{xx}\dt=(z_x+y_x-d_3Z)\dt+Z\d W(t) &\text{in }Q, \\
y=y_x=0 &\text{on }\Sigma, \\
z=0 &\text{on }\Sigma, \\
y(T)=y_T, \quad z(T)=z_T &\text{in } \dom. 
\end{cases}
\end{equation}

In  \eqref{backward_KS-H}, $(y_T,z_T)$ is the terminal state, $(y,z)$ is the state variable, $h$ is the control variable, and $d_i$, $i=1,2,3$, are suitable coefficients. As it is common in the theory of BSDE, the additional processes $(Z,Y)$ are needed for the well-posedness of the system (see \Cref{prop:app_well_1}).

The controllability problem we are interested can be formulated as follows.  

\begin{defi}
System \eqref{backward_KS-H} is said to be null-controllable if for any given initial data $(y_T,z_T) \in L^2(\Omega,\mathcal F_T;L^2(\dom)^2)$, there exists a control $h\in L^2_\mathcal F(0,T;L^2(\dom_0))$ such that the solution $(y,z)$ to system \eqref{backward_KS-H} satisfies $(y(0),z(0))=0$ in $\mathcal D$, $P$-a.s.
\end{defi}

Observe that the control $h$ is applied only on the first equation of the system and acts indirectly through the coupling $y_x$ in the drift of the second equation. As we have mentioned before, this situation is more complicated than for a single equation and in the stochastic setting even more difficulties appear. Indeed, there are only a handful of works studying controllability problems for coupled stochastic systems with less controls than equations, see, \cite{LL12,Liu14b,LL18}. In particular, in \cite{LL18}, for controlling several parabolic equations with few controls, well-known facts such as Kalman-type conditions that are true in the deterministic setting (see e.g. \cite[Theorem 5.1]{AKBGBdT11}) are not longer valid for the stochastic setting. 

Moreover, looking at system \eqref{backward_KS-H}, we see that the equation is coupled by first-order terms only. This is of course more difficult than the case where only zero-order terms are used and classical methodologies for dealing with coupled systems are not longer valid (see e.g.  \cite[Theorem 1.2]{LL12} and \cite[Proposition 4]{LL18}). 

Here, inspired in the proof for the deterministic case  of \cite{cmp2}, we are able to prove that under suitable conditions for the coupling coefficients $d_i$, system \eqref{backward_KS-H} is null controllable. More precisely, we have the following.

\begin{theo}\label{main_teo}
Assume that $d_i\in L_{\mathcal F}^\infty(0,T;W^{2,\infty}(\dom))$ for $i=1,2$ and $d_3\in L^\infty_{\mathcal F}(0,T;\mathbb R)$. Then, system \eqref{backward_KS-H} is null-controllable.
\end{theo}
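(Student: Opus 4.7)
The plan is to reduce the null-controllability assertion to an observability inequality for the forward adjoint system by the standard duality argument for backward stochastic controllability, and then to establish that inequality by combining two stochastic Carleman estimates derived with a common weight.

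First, I would derive the forward adjoint system. Pairing \eqref{backward_KS-H} with a forward pair $(\phi,\psi)$ on $(0,T)$ having martingale components $(\Phi,\Psi)$ and applying Itô's formula to $\langle y,\phi\rangle_{L^2(\dom)}+\langle z,\psi\rangle_{L^2(\dom)}$, the integration by parts in space (using $y=y_x=0$ and $z=0$ on $\Sigma$, together with $\phi=\phi_x=0$ and $\psi=0$ on $\Sigma$) identifies the adjoint as a forward fourth-order stochastic equation for $\phi$ coupled to a forward stochastic heat equation for $\psi$; the coupling appears through first-order terms, and the coefficients $d_1,d_2,d_3$ enter through the Itô cross-variations that tie $\Phi,\Psi$ to $Y,Z$. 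The null-controllability of \eqref{backward_KS-H} is then equivalent to the observability inequality
\[
\esp \|\phi(T)\|_{L^2(\dom)}^2 + \esp\|\psi(T)\|_{L^2(\dom)}^2 \le C\,\esp\iint_{\dom_0\times(0,T)}|\phi|^2\,\dx\,\dt,
\]
reflecting the fact that the control in \eqref{backward_KS-H} acts only on the fourth-order equation.

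Second, I would prove a global Carleman estimate for each of the two forward adjoint operators with a common singular weight $\theta = e^{s\alpha(t,x)}$ of Fursikov--Imanuvilov type, chosen so that $\alpha$ blows up at $t=0$ and $t=T$ and carries its spatial maximum strictly outside $\dom_0$, making the localised right-hand side lie in $\dom_0\times(0,T)$ for both operators. For the stochastic fourth-order component I would adapt the Carleman inequality of Gao \cite{Gao15,Gao18}, and for the stochastic heat component the one of Tang--Zhang \cite{TZ09}. The technical point emphasised in the abstract is that a single weight works for both, which forces $\alpha$ to be chosen so that the pointwise identities associated with both operators produce absorbable error terms simultaneously. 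Summing the two estimates I would then absorb the couplings: the fourth-order estimate delivers weighted control of $\phi$ and its derivatives up to order three, while the heat estimate delivers weighted control of $\psi$ and $\psi_x$, and the first-order coupling terms can be absorbed by appropriate powers of the large Carleman parameter $s$. The lower-order stochastic remainders coming from $\Phi,\Psi$ together with the potentials $d_i$ are absorbable thanks to the $L^\infty_{\mathcal F}$ and spatial $W^{2,\infty}$ bounds of the hypothesis, which in particular permit the weight-derivatives to act safely on these coefficients.

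The main obstacle, which is also the heart of the argument, is to eliminate the local integral in $\psi$ that necessarily appears on the right-hand side of the heat Carleman estimate, since the observation is only on $\phi$ inside $\dom_0\times(0,T)$. To handle this I would exploit the first-order coupling $\phi_x$ in the $\psi$-equation of the adjoint: introducing a smooth cutoff strictly supported in $\dom_0$ and integrating by parts in $x$, the local $\psi$-integral can be exchanged for a local integral involving $\phi$ and its first derivatives, modulo lower-order terms that the large $s$ absorbs back into the Carleman left-hand side. This is the stochastic counterpart of the elimination used in the deterministic proof of \cite{cmp2}, and it is precisely where the first-order nature of the coupling, rather than zero-order as in \cite{LL12,LL18}, forces the present approach. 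Once this elimination is done, the observability inequality follows and \Cref{main_teo} is obtained by the classical penalization/HUM duality for backward stochastic controllability.
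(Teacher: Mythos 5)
Your overall skeleton is the same as the paper's (duality reduces \Cref{main_teo} to the observability inequality \eqref{eq:obs_forward} for the forward adjoint system \eqref{eq:adjoint_KSS}, then two stochastic Carleman estimates with one common weight, then elimination of the local term of the unobserved component), but the elimination step -- which you yourself call the heart of the argument -- is set up with the wrong coupling, and as described it would fail. In the adjoint system the observed component satisfies $\d u+(\gamma u_{xxxx}+u_{xxx}+u_{xx})\dt=v_x\dt+d_1u\,\d W(t)$ and the unobserved one satisfies $\d v-\Gamma v_{xx}\dt=(v_x+u_x)\dt+(d_2u+d_3v)\d W(t)$. To trade a local integral of $\psi$ (in fact of $\psi_x$) for local integrals of $\phi$, you must use the coupling $\psi_x$ sitting in the drift of the $\phi$-equation: this is the dual of the term $y_x$ in the $z$-equation of \eqref{backward_KS-H}, i.e.\ exactly the channel through which the control reaches $z$. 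The coupling you invoke, $\phi_x$ in the $\psi$-equation, goes the other way: it expresses $\phi_x$ in terms of $\psi$-data, and no cutoff-plus-integration-by-parts manipulation of the $\psi$-equation produces a bound of a local $|\psi|^2$ (or $|\psi_x|^2$) integral by local quantities in $\phi$ alone -- it only regenerates local terms in $\psi$. Moreover, in the stochastic setting one cannot integrate by parts in time; the paper's Step~3 performs the correct elimination by applying It\^o's formula to $\d(\zeta u v_x)$, with $\zeta$ a cutoff times Carleman weights, using that $v_x$ is a drift source of the $u$-equation to exchange $\esp\int\zeta|v_x|^2\dx\dt$ for local integrals of $u,u_x,u_{xx},u_{xxx}$.

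Two further points of substance are missing. First, precisely because the quantity that couples into the $u$-equation is $v_x$, the paper applies the heat Carleman estimate to $v_x$, not to $v$; since $v_x$ has no prescribed boundary conditions, one needs the estimate of \cite{yan18} for non-homogeneous Neumann data (not the Dirichlet-type estimate of \cite{TZ09} you cite), and the resulting boundary terms in $v_{xx}$ on $\Sigma$ must then be removed via trace and interpolation inequalities together with a weighted $H^3$ energy estimate for $\widetilde v=g(t)v$ -- this is where the restriction $m>3$ on the weight exponent enters. Second, after the elimination the right-hand side contains a local term in $u_{xxx}$ which, unlike the deterministic case of \cite{cmp2}, cannot be handled by a direct integration by parts because the Carleman left-hand side carries no global $u_{xxxx}$ term; the paper needs the separate argument of \Cref{lem:local_u3x} and two more cutoff stages to descend to a local term in $u$ only, with the large weight $\lambda^{23}\phi_m^{23}$ appearing in \eqref{eq:car_final}. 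Your plan does not address these steps, and the directional error in the coupling is a genuine gap rather than a matter of presentation.
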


Using the classical equivalence between null-controllability and observability, the main ingredient of the proof consists in obtaining a suitable observability inequality for the corresponding adjoint system. Using stochastic versions of well-known Carleman estimates for the fourth-order operator and the heat equation with non-homogeneous Neumann boundary conditions, we adapt the methodology in \cite{cmp2} to the stochastic framework.

\subsubsection{Controllability of the forward system}

In the second part of the work, we are interested in studying the controllability of a simplified model of a stochastic version of \eqref{KS-H}. In more detail, let us consider the system 
\begin{equation}\label{forward_KS-H_nonlin}
\begin{cases}
\d y+(y_{xxxx}+ayy_x)\,\dt=\chi_{\dom_0}h\,\dt+(b_1y+ b_2 z)\, \d W(t) &\text{in }Q, \\
\d z- z_{xx}\,\dt=y\,\dt+b_3z\, \d W(t) &\text{in }Q, \\
y=y_{xx}=0 &\text{on }\Sigma, \\
z=0 &\text{on }\Sigma, \\
y(0)=y_0, \quad z(0)=z_0 &\text{in } \dom. 
\end{cases}
\end{equation}

In \eqref{forward_KS-H_nonlin}, $(y,z)$ is the state, $h$ is the control variable and $(y_0,z_0)$ is the initial datum. In this case, we assume that $b_i=b_i(t)\in L^\infty_{\mathcal{F}}(0,T;\mathbb R)$, $i=1,2,3$. Here, $a$ is either 0 or 1.

In the case where $a=0$, system \eqref{forward_KS-H_nonlin} is linear and its controllability is consequence of a general result shown in \cite{Liu14b}. In that work, the controllability of coupled fractional parabolic systems with one control force is addressed and under the assumptions on the coefficients $b_i$ and the boundary conditions above, the controllability of \eqref{forward_KS-H_nonlin} can be obtained just by setting $\alpha=2$ and $\beta=1$ in the notation of Theorem 1.1 of \cite{Liu14b}. The proof is achieved by means of the Lebeau-Robbiano technique and the obtention of a suitable observability inequality for the adjoint system is the main ingredient. 


In the case where $a=1$, system \eqref{forward_KS-H_nonlin} is clearly nonlinear and, as far as we know, its controllability has not been addressed in the literature. In fact, the controllability of nonlinear stochastic parabolic equations has been rarely studied in the literature. The main difficulty comes from the fact that, as pointed out in \cite{TZ09}, there is lack of compactness in the functional spaces where the equations are posed and well-known techniques in the deterministic setting cannot be readily translated to the stochastic framework.

In the recent works \cite{HSLBP20b,HSLBP20a}, two different approaches have been introduced to circumvent this lack of compactness and controllability results for semilinear stochastic parabolic equations have been established. The first approach, introduced in \cite{HSLBP20b}, uses refined Carleman estimates and a suitable fixed point argument to prove a global null-controllability for the heat equation with globally Lipschitz nonlinearities.

The second approach, introduced in \cite{HSLBP20a}, is inspired in the source term method of the seminal work \cite{LLT13} and the truncation approach used in \cite{Gao17} (see also \cite{Lia14}) and has been used to prove a new concept of controllability for the heat equation with polynomial-type nonlinearities. 
%
%
To make precise this new formulation, we begin by defining the weight function
\begin{equation*}
\hat{\rho}(t)=e^{-C/(T-t)}, \quad t\in[0,T)
\end{equation*}
where $C>0$ is a constant only depending on $\dom$, $\dom_0$ and the coefficients $b_i$, $i=1,2,3$. This constant is defined precisely at equation \eqref{eq:def_rho_hat_exact} below.

For every $t\in[0,T]$, we introduce the functional space
\begin{align*}
X_t :=\notag \Bigg\{ & (y,z) \in C([0,t];H^2(\dom)\times H_0^1(\dom)) \cap L^2(0,t;H^4(\dom)\times H^2(\dom))\ :\\ \notag
&   \Bigg( \sup_{0\leq s \leq t}\left\|\frac{y(s)}{\hat{\rho}(s)}\right\|^2_{H^2(\dom)} + \sup_{0\leq s \leq t}\left\|\frac{z(s)}{\hat{\rho}(s)}\right\|^2_{H_0^1(\dom)} \\
&\quad  + \int_{0}^{t}\left\|\frac{y(s)}{\hat{\rho}(s)}\right\|^2_{H^4(\dom)}\d{s}+\int_{0}^{t}\left\|\frac{z(s)}{\hat{\rho}(s)}\right\|^2_{H^2(\dom)}\d{s}\Bigg)^{1/2} < + \infty \Bigg\}, 
\end{align*}
endowed with its corresponding norm. Lastly, for each $R>0$, we define $\varphi_R\in C_0^\infty(\R^+)$ as
\begin{equation}\label{c_tr}
\varphi_R(s)=
\begin{cases}
1, &\mbox{if } s\leq R \\
0, &\mbox{if } s\geq 2R,
\end{cases}
\end{equation}
Notice that defined in this way
\begin{equation}\label{deriv_varphi_R}
\|\varphi_R^\prime\|_\infty\leq C/R.
\end{equation}

The main interest of introducing the above functions and spaces is that for establishing a controllability result for \eqref{forward_KS-H_nonlin}, first we will study the controllability of a simplified version of system  \eqref{forward_KS-H_nonlin} where the full nonlinearity is replaced by a truncated one. 

More precisely, for $f\in C^1(\R^2;\R)$, defined as $f(s,u)=s  u$ for all $(s,u)\in \R^2$, we will study the following system
\begin{equation}\label{forward_KS-H_nonlin_R}
\begin{cases}
\d y+(y_{xxxx}+f_R(y,y_x))\,\dt=\chi_{\dom_0}h\,\dt+(b_1y+ b_2 z)\, \d W(t) &\text{in }Q, \\
\d z- z_{xx}\,\dt=y\,\dt+b_3z\, \d W(t) &\text{in }Q, \\
y=y_{xx}=0 &\text{on }\Sigma, \\
z=0 &\text{on }\Sigma, \\
y(0)=y_0, \quad z(0)=z_0 &\text{in } \dom. 
\end{cases}
\end{equation}
where 
\begin{equation}\label{eq:def_fR}
f_R(y,y_x)=\varphi(\|{(y,z)}\|_{X_t}) f(y,y_x).
\end{equation}

Our first result is the following.

\begin{theo}
\label{th:mainresult1}
Let $T>0$. There exists a positive constant $C=C(\dom, \dom_0, b_1,b_2,b_3)$ such that for every $R \leq e^{-C/T}$, for every initial data $(y_0,z_0) \in L^2(\Omega,\fil_0;H^2(\dom)\cap H_0^1(\dom))\times L^2(\Omega,\fil_0; H_0^1(\dom)) $, there exists $h \in L^2_{\fil}(0,T;L^2(\dom))$, such that the solution $(y,z) \in L_{\mathcal{F}}^2(\Omega; C([0,T];H^2(\dom)))\times  L_{\mathcal{F}}^2(\Omega; C([0,T];H_0^1(\dom))) $ of \eqref{forward_KS-H_nonlin_R} satisfies 
\begin{equation}
\label{eq:estimatenonlinearityPropfR}
 \esp \left(\norme{(y,z)}_{X_T}^2 \right) 
\leq e^{2C/T}  \esp\left(\|y_0\|^2_{H^2(\dom)\cap H_0^1(\dom)}+\|z_0\|^2_{H_0^1(\dom)}\right) ,
\end{equation}
and
\begin{equation}
\label{eq:ynulth1}
y(T, \cdot) = 0\ \text{in}\ \dom,\ \text{a.s.}
\end{equation}
\end{theo}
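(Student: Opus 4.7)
The plan is to follow the source term method introduced in \cite{LLT13} combined with a truncation argument, as adapted to the stochastic setting in \cite{HSLBP20a} and \cite{Gao17}. The overall scheme decouples the construction into two main steps: a linear controllability result in the presence of a source term, and a fixed-point argument that exploits the truncation $\varphi_R$.

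First I would establish the following linear controllability with source term: for the system obtained from \eqref{forward_KS-H_nonlin_R} by replacing $f_R(y,y_x)$ with a generic adapted source $F$ in the drift of the $y$-equation, and for $F$ belonging to a weighted $L^2_{\fil}(Q)$ space with weight $1/\rho_0$ where $\rho_0$ decays strictly faster than $\hat{\rho}$ as $t\to T^-$, one can produce a control $h$ so that the associated controlled solution $(y,z)$ lies in $X_T$, satisfies $y(T)=0$ almost surely, and obeys
\begin{equation*}
\esp\,\norme{(y,z)}_{X_T}^2 \leq e^{2C/T}\,\esp\Big(\|y_0\|^2_{H^2(\dom)\cap H_0^1(\dom)} + \|z_0\|^2_{H_0^1(\dom)} + \|F/\rho_0\|^2_{L^2(Q)}\Big).
\end{equation*}
This is obtained by subdividing $[0,T)$ into a sequence of intervals $[T_k,T_{k+1})$ accumulating at $T$, applying on each piece the linear null-controllability of \cite{Liu14b} (with cost behaving like $e^{C/(T_{k+1}-T_k)}$) to cancel both the current state and the integrated contribution of the source, and then summing and choosing the time-splitting so that the resulting weighted norms match the target decay $\hat{\rho}$.

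Second, define the nonlinear map $\mathcal{N}\colon X_T \to X_T$ sending $(\bar y,\bar z)$ to the controlled solution $(y,z)$ of the linear system of Step~1 with source $F = -\varphi_R(\norme{(\bar y,\bar z)}_{X_\cdot})\,\bar y\,\bar y_x$. The truncation \eqref{c_tr} guarantees $F\equiv 0$ as soon as $\norme{(\bar y,\bar z)}_{X_t}\geq 2R$, while the Sobolev embeddings $H^2(\dom)\hookrightarrow L^\infty(\dom)$ and $H^4(\dom)\hookrightarrow W^{1,\infty}(\dom)$, applied pointwise in $t$, together with \eqref{deriv_varphi_R} yield $\norme{F/\rho_0}_{L^2(Q)} \leq CR\,\norme{(\bar y,\bar z)}_{X_T}$ and an analogous Lipschitz bound for the difference of two arguments. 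Combining this with the estimate of Step~1, the map $\mathcal{N}$ is a contraction on $X_T$ provided $CR\,e^{2C/T}<1$, which gives exactly the smallness condition $R\leq e^{-C'/T}$ of the statement. The Banach fixed-point theorem then produces $(y,z)\in X_T$ that solves \eqref{forward_KS-H_nonlin_R} and satisfies both \eqref{eq:estimatenonlinearityPropfR} and \eqref{eq:ynulth1}.

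The main obstacle I expect is Step~1. Constructing the source-term method in a stochastic adapted framework requires concatenating controls over the shrinking subintervals while preserving $\fil_t$-adaptedness (each piece must be supported only on its own subinterval) and tracking the cost of control sharply enough that the accumulated weighted estimates reproduce the precise exponential $\hat{\rho}=e^{-C/(T-t)}$ prescribed by $X_T$. Verifying that $\bar y\,\bar y_x$ sits in the weighted source space also relies crucially on the nonlinear Sobolev embeddings above, which explains why the $H^2\times H_0^1$ regularity built into $X_T$ (rather than mere $L^2\times L^2$) and the small-data regime $R\leq e^{-C/T}$ are both essential to close the fixed-point argument.
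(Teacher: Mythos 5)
Your overall architecture is the one the paper actually uses (explicit-cost linear controllability, stochastic source term method with a dyadic splitting of $[0,T)$, truncation, Banach fixed point), but there is a genuine gap in your Step 1: you invoke \cite{Liu14b} as if it supplied the control cost $e^{C/(T_{k+1}-T_k)}$ on each subinterval. That reference proves null controllability of the linear coupled system with one control, but it does \emph{not} provide an explicit dependence of the cost on the control time, and the whole theorem hinges on this quantitative input: it is what produces the constants $e^{C/T}$ and $e^{2C/T}$ in \eqref{eq:estimatenonlinearityPropfR} and the smallness threshold $R\leq e^{-C/T}$ that makes the map a contraction. The paper has to prove this cost estimate itself (\Cref{prop:with_cost_control}) by a Lebeau--Robbiano iteration: a spectral inequality for the hinged bilaplacian (\Cref{lem:spec_4}), a low-frequency observability inequality with a single observation and explicit constant $Ce^{Cr^{1/4}}/\tau$ (\Cref{prop:obs_ineq_Xk}, which is the only piece genuinely borrowed from \cite{Liu14b}), partial controls killing the first modes (\Cref{prop:size_control}), a dissipation estimate for the high modes (\Cref{prop:dissipation}), and a dyadic-in-time concatenation with frequencies $r_j=\beta^2 2^{4j}$ tuned so that the series of control costs converges and yields exactly $C_T=e^{C/T}$. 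Without carrying out this step (or an equivalent quantitative argument), your contraction condition ``$CR\,e^{2C/T}<1$'' has no proved constant behind it, so the proof does not close as written.

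A second, smaller point: the control cost and the source term iteration naturally give weighted estimates at the $L^2(\dom)$ level (this is the paper's \Cref{prop:source_stochastic}, with weights $\rho_0$ and $\rho$), whereas your Step 1 asserts directly that the controlled trajectory lies in $X_T$, i.e.\ weighted $H^2\times H^1_0$ and $H^4\times H^2$ bounds with the weight $\hat\rho$. That upgrade is not automatic: the paper obtains it in \Cref{prop:SourceTermReg} by the change of variables $w=y/\hat\rho$, $r=z/\hat\rho$, It\^{o}'s formula, and the maximal regularity estimate of \Cref{prop:app_well_2}, using the weight hierarchy $\rho_0\leq C\hat\rho$, $|\hat\rho'|\rho_0\leq C\hat\rho^2$, $\hat\rho^2\leq C\rho$. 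You should state and prove this bootstrap explicitly, since the Lipschitz estimate for the truncated nonlinearity (which otherwise you handle correctly, and your fixed point on $X_T$ rather than on the source space $\mathcal S$ is only a cosmetic difference from the paper) requires precisely these $\hat\rho$-weighted $H^1$ and $H^2$ norms of $y/\hat\rho$.
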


This result is a small-time global controllability result for \eqref{forward_KS-H_nonlin_R}, that is, a \emph{truncated} version of the original system  \eqref{forward_KS-H_nonlin}. Indeed, notice that at any time $t\in[0,T]$, if the controlled solution to \eqref{forward_KS-H_nonlin_R} is such that $\|(y,z)\|_{X_t}>2R$, the corresponding nonlinearity $f_R$ vanishes. We emphasize here that this result is independent of the size of the initial data $(y_0,z_0)$.

Once this result is established, we can refine the theorem and provide the following small-time \emph{statistical} local null-controllability result. 

\begin{theo}
\label{th:mainresult2}
Let $\epsilon >0$ and $T>0$. Let $C=C(\dom, \dom_0, b_1,b_2,b_3) >0$ as in \Cref{th:mainresult1}, $R =  e^{-C/T}$ and $\delta = e^{-2C/T} \sqrt{\epsilon}$. Then, for every initial data $(y_0,z_0) \in L^2(\Omega,\fil_0;H^2(\dom)\cap H_0^1(\dom))\times L^2(\Omega,\fil_0; H_0^1(\dom)) $ verifying 
\begin{equation}\label{eq:small_data}
\norme{(y_0,z_0)}_{L^2(\Omega,\fil_0;H^2(\dom)\cap H_0^1(\dom)\times H_0^1(\dom))} \leq \delta,
\end{equation}
there exists $h \in L^2_\fil(0,T;L^2(\dom_0))$ such that the solution $(y,z) \in L_{\mathcal{F}}^2(\Omega; C([0,T];H^2(\dom)))\times  L_{\mathcal{F}}^2(\Omega; C([0,T];H_0^1(\dom))) $ of \eqref{forward_KS-H_nonlin_R} satisfies 
\begin{equation}
\label{eq:ynulth2}
y(T, \cdot) = 0\ \text{in}\ \dom,\ \text{a.s.}
\end{equation}
and
\begin{equation}
\label{eq:probafRf}
\mathbb{P} \bigg( \sup_{t \in [0,T]} \norme{(y,z)}_{X_t} \leq R \bigg) \geq 1 - \epsilon.
\end{equation}
\end{theo}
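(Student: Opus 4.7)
The plan is to apply Theorem \ref{th:mainresult1} directly with the choices $R=e^{-C/T}$ and $\delta = e^{-2C/T}\sqrt{\epsilon}$, and then combine the estimate \eqref{eq:estimatenonlinearityPropfR} with Markov's inequality to show that the bad event on which the truncation is active has probability at most $\epsilon$. Concretely, I would first invoke Theorem \ref{th:mainresult1} on the initial data $(y_0,z_0)$ (which is permitted without any size constraint) to obtain a control $h\in L^2_{\fil}(0,T;L^2(\dom))$ and a solution $(y,z)$ of \eqref{forward_KS-H_nonlin_R} satisfying $y(T,\cdot)=0$ a.s., and obeying
\begin{equation*}
\esp\left(\norme{(y,z)}_{X_T}^2\right) \leq e^{2C/T}\,\esp\left(\|y_0\|^2_{H^2(\dom)\cap H_0^1(\dom)}+\|z_0\|^2_{H_0^1(\dom)}\right).
\end{equation*}
This immediately gives \eqref{eq:ynulth2}, so the whole content of the theorem reduces to proving the probability bound \eqref{eq:probafRf}.

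Next, I would plug the smallness assumption \eqref{eq:small_data} into the estimate above. Since $\delta^2 = e^{-4C/T}\epsilon$, this yields
\begin{equation*}
\esp\left(\norme{(y,z)}_{X_T}^2\right) \leq e^{2C/T}\cdot e^{-4C/T}\epsilon = e^{-2C/T}\epsilon = R^2\,\epsilon.
\end{equation*}
A key observation is that, by the very definition of the norm on $X_t$, the map $t\mapsto \norme{(y,z)}_{X_t}$ is non-decreasing (the suprema are taken over $[0,t]$ and the time integrals are extended as $t$ grows), hence $\sup_{t\in[0,T]}\norme{(y,z)}_{X_t} = \norme{(y,z)}_{X_T}$. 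Markov's inequality then gives
\begin{equation*}
\mathbb{P}\bigg(\sup_{t\in[0,T]}\norme{(y,z)}_{X_t} > R\bigg) = \mathbb{P}\left(\norme{(y,z)}_{X_T}^2 > R^2\right) \leq \frac{\esp(\norme{(y,z)}_{X_T}^2)}{R^2} \leq \epsilon,
\end{equation*}
which is precisely \eqref{eq:probafRf}.

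I would close by pointing out the underlying meaning of the result: on the event $\{\sup_{t\in[0,T]}\norme{(y,z)}_{X_t}\leq R\}$, the cut-off $\varphi_R(\norme{(y,z)}_{X_t})$ equals $1$ for every $t\in[0,T]$, so the truncated nonlinearity $f_R(y,y_x)$ coincides with the true nonlinearity $f(y,y_x)=yy_x$, and hence $(y,z)$ is in fact a solution of the original untruncated system \eqref{forward_KS-H_nonlin}. In this sense, the theorem is genuinely a \emph{statistical} local null-controllability statement for the nonlinear problem, even though it is phrased in terms of \eqref{forward_KS-H_nonlin_R}. There is essentially no obstacle of an analytical nature here: the deep work has already been done in Theorem \ref{th:mainresult1}, and what remains is a clean concentration argument. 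The only point requiring some care is verifying the monotonicity of $t\mapsto \norme{(y,z)}_{X_t}$ so that the sup-in-time event can be controlled by a single Markov inequality applied at $t=T$, and checking that the constant chain $\delta^2 \cdot e^{2C/T} = R^2\epsilon$ indeed lines up with the choice of $R$ and $\delta$ prescribed in the statement.
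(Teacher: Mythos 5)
Your proposal is correct and follows essentially the same route as the paper: invoke \Cref{th:mainresult1} (legitimate since $R=e^{-C/T}$ meets its hypothesis), feed the smallness assumption \eqref{eq:small_data} into \eqref{eq:estimatenonlinearityPropfR}, and conclude \eqref{eq:probafRf} by Markov's inequality, with \eqref{eq:ynulth2} coming for free from \eqref{eq:ynulth1}. If anything, your write-up is slightly more careful than the paper's, which states the Markov bound for the event $\{\norme{(y,z)}_{X_T}^2\leq R^2\}$ rather than its complement and does not comment on the monotonicity of $t\mapsto\norme{(y,z)}_{X_t}$ that identifies the supremum over $[0,T]$ with the norm at $t=T$.
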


As in \cite{HSLBP20a}, we can justify the new terminology of \emph{statistical} null-controllability as follows. \Cref{th:mainresult2} states that given $T>0$ and a small parameter $\epsilon>0$, one is able to find a ball of size $\delta>0$ (depending on $T$ and $\epsilon$) such that, with a confidence level $1-\epsilon$, one can steer any initial data $(y_0,z_0)$ smaller than $\delta$ to 0 at time $T$ without crossing the threshold $R$. Actually, from \eqref{eq:probafRf} and \eqref{eq:def_fR}, we can deduce that the controlled trajectory constructed in \Cref{th:mainresult2} is a solution to the original system \eqref{forward_KS-H_nonlin} with probability greater than $1-\epsilon$. Also, as in \cite{HSLBP20a}, we emphasize that the existence and uniqueness of the nonlinear system \eqref{forward_KS-H_nonlin} is not needed a priori to study the control problem. In turn, our method yields the existence of a solution in the weighted space $X_t$ which has a controllability constraint built-in. 

\subsection{Outline of the paper}

The rest of the paper is organized as follows. In Section \ref{sec:proof_main}, we prove in a first part the observability inequality for a suitable forward adjoint system and then, following well-known arguments, we obtain the proof of Theorem \ref{main_teo}. In \Cref{sec:forward_cont}, we prove \Cref{th:mainresult1,th:mainresult2} by adapting the novel method introduced in \cite{HSLBP20a}. Finally, in Section \ref{sec:further}, we make some final comments about our work.

\section{Null controllability for the backward system}\label{sec:proof_main}

The goal of this section is to prove the null controllability of system \eqref{backward_KS-H}. As already mentioned, this problem will be reformulated in terms of the observability of the adjoint system, which in this case is given by
\begin{equation}\label{eq:adjoint_KSS}
\begin{cases}
\d u+(\gamma u_{xxxx}+u_{xxx}+u_{xx})\dt=v_xdt+d_1 u\, \d W(t) &\text{in }Q, \\
\d v-\Gamma v_{xx}\dt=(v_x+u_x)\dt+(d_2u+d_3 v) \d W(t) &\text{in }Q, \\
u=u_x=0 &\text{on }\Sigma, \\
v=0 &\text{on }\Sigma, \\
u(0)=u_0, \quad v(0)=v_0 &\text{in } \dom. 
\end{cases}
\end{equation}

The main task is reduced to prove the following result.
\begin{prop}\label{prop:observability_forward}
Under the assumptions of Theorem \ref{main_teo}, there exists a positive constant $C$ such that for every $(u_0,v_0)\in L^2(\Omega, \mathcal F_0;L^2(\dom)^2)$, the solution $(u,v)$ of \eqref{eq:adjoint_KSS} verifies 
\begin{equation}\label{eq:obs_forward}
\esp \left(\int_{\dom}\left(|u(T)|^2+|v(T)|^2\right)\dx\right)\leq C \esp \left(\int_{Q_{\dom_0}}|u|^2\dx\dt\right).
\end{equation}
\end{prop}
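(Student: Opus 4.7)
The plan is to establish a global stochastic Carleman estimate for the adjoint system \eqref{eq:adjoint_KSS} using a single weight function well adapted to both the fourth-order operator in $u$ and the second-order operator in $v$, and then to combine it with an It\^o-based forward energy estimate in order to reach the final time $T$. This strategy adapts the deterministic argument of \cite{cmp2} to the stochastic setting, and relies crucially on the stochastic Carleman estimates for the fourth-order operator and for the heat equation with non-homogeneous Neumann data that have been recalled earlier in the paper.

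First, I would fix nested open sets $\dom_2\Subset\dom_1\Subset\dom_0$ and a standard Carleman weight $\theta=e^{s\alpha}$ built from a function $\psi\in C^\infty(\overline{\dom})$ whose critical points all lie in $\dom\setminus\dom_2$. Applying the stochastic Carleman estimate for the fourth-order operator to the $u$-equation, with drift source $v_x$ and diffusion $d_1 u$, gives schematically
\[
\mathcal{I}_4(s,\lambda;u)\le C\,\esp\iint_Q \theta^2\bigl(|v_x|^2+|u|^2\bigr)dx\,dt+C\,\esp\iint_{(0,T)\times\dom_1}\theta^2|u|^2\,dx\,dt,
\]
while the stochastic Carleman estimate for the heat operator applied to the $v$-equation, with the same weight $\theta$, source $u_x+v_x$, and diffusion $d_2 u+d_3 v$, yields
\[
\mathcal{I}_2(s,\lambda;v)\le C\,\esp\iint_Q \theta^2\bigl(|u_x|^2+|u|^2+|v|^2\bigr)dx\,dt+C\,\esp\iint_{(0,T)\times\dom_1}\theta^2|v|^2\,dx\,dt.
\]
Here $\mathcal{I}_4$ and $\mathcal{I}_2$ denote the usual weighted energies containing $u,u_x,\ldots,u_{xxxx}$ and $v,v_x,v_{xx}$ respectively, with the appropriate powers of $s\lambda$. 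Adding the two estimates and choosing $s,\lambda$ large, the first-order cross-couplings $\theta^2|v_x|^2$ and $\theta^2|u_x|^2$ on the right are absorbed on the left (since $\mathcal{I}_4$ dominates an $s\lambda^2\theta^2|u_x|^2$ contribution via the $u_{xxx}$ term and $\mathcal{I}_2$ controls $s\lambda^2\theta^2|v_x|^2$); the lower-order terms involving the bounded coefficients $d_i$ are absorbed in the same way.

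The main technical difficulty, and the step I expect to be the genuine obstacle, is removing the local term $\esp\iint\theta^2\chi_{\dom_1}|v|^2\,dx\,dt$ using only observations of $u$ on $\dom_0$. Following \cite{cmp2}, I would exploit the first equation of \eqref{eq:adjoint_KSS} to write, in the It\^o sense,
\[
v_x\,dt=du+(\gamma u_{xxxx}+u_{xxx}+u_{xx})\,dt-d_1 u\,dW(t),
\]
pick a cutoff $\eta\in C_c^\infty(\dom_0)$ with $\eta\equiv 1$ on $\dom_1$, and apply It\^o's formula to $\int\theta^2\eta\,u\,v\,dx$. Integration by parts in $x$ converts the coupling contribution $\int\theta^2\eta\,v\,v_x\,dx=-\tfrac12\int(\theta^2\eta)_x v^2\,dx$ into the desired local $v^2$ integral (choosing $\eta$ so that $(\theta^2\eta)_x$ has a definite sign on $\dom_1$), at the cost of volume integrals of $u$ multiplied by derivatives of $\theta^2\eta$. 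These remaining terms are supported in $\dom_0$ and can be reduced to a local $L^2$ observation of $u$ by Caccioppoli-type estimates for the fourth-order operator. The delicate points here are (i) tracking accurately the It\^o correction arising when $u$ is rough in time, and (ii) that the coupling is first-order rather than zero-order, which prevents a direct application of the classical approaches for coupled stochastic parabolic systems (cf.\ \cite{LL12,LL18}).

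Combining the previous steps yields a global Carleman estimate of the form $\esp\iint_Q\theta^2(|u|^2+|v|^2)\,dx\,dt\le C\,\esp\iint_{(0,T)\times\dom_0}\theta^2|u|^2\,dx\,dt$. To conclude \eqref{eq:obs_forward} I would restrict the left-hand side to $t\in[T/4,3T/4]$, where $\theta$ is uniformly bounded below by a positive constant, and derive, via It\^o's formula applied to $|u|^2+|v|^2$, the forward energy estimate
\[
\esp\bigl(\|u(T)\|^2_{L^2(\dom)}+\|v(T)\|^2_{L^2(\dom)}\bigr)\le e^{CT}\,\esp\bigl(\|u(t)\|^2_{L^2(\dom)}+\|v(t)\|^2_{L^2(\dom)}\bigr),
\]
valid for every $t\in[0,T]$ thanks to the dissipativity of the fourth- and second-order operators under the prescribed boundary conditions and the boundedness of the $d_i$. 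Averaging this inequality in $t\in[T/4,3T/4]$ and plugging the result into the Carleman bound yields \eqref{eq:obs_forward}.
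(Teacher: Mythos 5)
Your overall skeleton (two stochastic Carleman estimates with a common weight, elimination of the local term in the second variable by using the coupling, then an It\^o/Gronwall energy estimate on $[T/4,3T/4]$ to reach time $T$) is the same as the paper's, but the step you yourself identify as the crux --- removing the local term in $v$ --- does not work as you describe, and this is a genuine gap. Applying the heat Carleman estimate to $v$ leaves you with a local term in $|v|^2$, and your device for controlling it is It\^o's formula on $\int\theta^2\eta\,u\,v\,dx$: the coupling then produces $\int\theta^2\eta\,v\,v_x\,dx=-\tfrac12\int(\theta^2\eta)_x\,v^2\,dx$, and you propose to choose $\eta$ so that $(\theta^2\eta)_x$ has a sign on $\dom_1$. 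This cannot be arranged in any useful way: since $\eta\in C_c^\infty(\dom_0)$, the factor $(\theta^2\eta)_x$ necessarily changes sign on $\operatorname{supp}\eta$, so the identity only yields an integral of $v^2$ against a sign-indefinite weight over $\dom_0$; the contribution from $\operatorname{supp}\eta\setminus\dom_1$ is a local $v^2$ term of exactly the same strength as the one you are trying to estimate and cannot be absorbed by the left-hand side (nor do the weight powers match the Carleman local term $\lambda^3\phi^3\theta^2|v|^2$). In short, the product $u\,v$ does not see the first-order coupling correctly, so the main obstruction of the problem is not overcome.

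The structural fix, which is what the paper does, is to work with $v_x$ rather than $v$: differentiate the second equation and apply the heat Carleman estimate with non-homogeneous boundary data (Lemma \ref{lem:carleman_H}) to $v_x$, handling the resulting boundary traces by trace/interpolation inequalities and an auxiliary weighted $H^3$ energy estimate for $\widetilde v=g(t)v$. The local term is then $\lambda^3\phi_m^3\theta^2|v_x|^2$, and It\^o's formula applied to $\zeta u v_x$ with $\zeta=\lambda^3\phi_m^3\theta^2\eta$ recovers exactly $\esp\int\zeta|v_x|^2$ (with the good sign) from the coupling $v_x$ in the drift of the $u$-equation; the price is a collection of local terms in $u,u_x,u_{xx},u_{xxx}$, which are then reduced to a single local $|u|^2$ observation by further It\^o/integration-by-parts arguments (Lemma \ref{lem:local_u3x} for $u_{xxx}$, since no global $u_{xxxx}$ term is available, then cutoff arguments for $u_{xx}$ and $u_x$), at the cost of the large power $\lambda^{23}\phi_m^{23}$. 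Your appeal to unspecified ``Caccioppoli-type estimates'' for the stochastic fourth-order equation glosses over precisely this part as well. The final energy/Gronwall step and the use of the time interval $(T/4,3T/4)$ in your proposal are fine and agree with the paper (which additionally uses Poincar\'e to pass from $|v_x|^2$ to $|v|^2$).
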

To prove \eqref{eq:obs_forward}, we use Carleman estimates in the spirit of \cite{cmp2} and adapt the procedure to the stochastic framework. As mentioned there, the key point is to have suitable estimates with the same weight functions allowing to obtain the observability inequality with only one observation term.

\subsection{Preliminaries on stochastic Carleman estimates}
We begin by recalling below two global Carleman estimates for the stochastic fourth-order equation and the stochastic heat equation with non-homogeneous boundary conditions. These will serve as the basis for obtaining our observability estimate. 

Let us consider some $\dom_1\subset\subset \dom_0$. Similar to \cite{fursi}, we show the following known result.	
\begin{lem}\label{lem:weight_fursi}
There is a $\psi\in C^\infty(\overline{\dom})$ such that $\psi>0$ in $\dom$, $\psi(0)=\psi(1)=0$ and $|\psi_x|>0$ in $\overline{\dom}\setminus \dom_1$. 
\end{lem}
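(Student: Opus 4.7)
The plan is to construct $\psi$ by an explicit piecewise definition, smoothed via a transition function in the region that is allowed to contain the critical point, and then verify the three required properties directly. Since $\dom_1\subset\subset \dom = (0,1)$, I can first fix $0<a<b<1$ with $[a,b]\subset \dom_1$, so that the interior region $[a,b]$ is the only place where $\psi_x$ is permitted to vanish, while I need $|\psi_x|>0$ on $[0,a]\cup [b,1]$.

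On the two outer intervals I would take linear pieces: set $\psi(x)=x$ on $[0,a]$ (so $\psi(0)=0$, $\psi>0$ on $(0,a]$ and $\psi_x=1$) and $\psi(x)=\tfrac{b(1-x)}{1-b}$ on $[b,1]$ (so $\psi(1)=0$, $\psi>0$ on $[b,1)$ and $\psi_x=-\tfrac{b}{1-b}<0$). To interpolate smoothly between these two linear pieces on $[a,b]$, I would pick a smooth transition $\chi\in C^\infty([a,b];[0,1])$ with $\chi(a)=0$, $\chi(b)=1$, and $\chi^{(k)}(a)=\chi^{(k)}(b)=0$ for every $k\geq 1$ (such a function is standard, built from $e^{-1/t}$-type bumps), and define
\[
\psi(x)= \bigl(1-\chi(x)\bigr)\,x \;+\; \chi(x)\,\frac{b(1-x)}{1-b}\qquad\text{for }x\in[a,b].
\]
Because $\chi$ is flat to all orders at the endpoints $a$ and $b$, the piecewise-defined function coincides with a $C^\infty$ germ at those points, so $\psi\in C^\infty(\overline{\dom})$.

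To finish, I would verify the three conditions. Positivity on $(0,1)$: on $[0,a]\cup[b,1]$ it is immediate from the explicit formulas; on $(a,b)$ we have $\psi(x)=(1-\chi)x+\chi\cdot\tfrac{b(1-x)}{1-b}$, a convex combination of the two positive numbers $x>0$ and $\tfrac{b(1-x)}{1-b}>0$, hence positive. The boundary values $\psi(0)=\psi(1)=0$ are immediate. For $|\psi_x|>0$ on $\overline{\dom}\setminus \dom_1$: since $[a,b]\subset \dom_1$, the complement $\overline{\dom}\setminus \dom_1$ is contained in $[0,a]\cup [b,1]$, where $\psi_x$ equals $1$ or $-\tfrac{b}{1-b}$, both nonzero.

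The only subtle point, and the main step to justify carefully, is the $C^\infty$ matching at $x=a$ and $x=b$. This is where the flatness of $\chi$ to all orders at the endpoints is essential: without it, the derivatives from the linear sides would not match those of the interpolation formula, and one would only obtain a $C^1$ or $C^k$ function. With the chosen $\chi$, every derivative of the correction term $\chi(x)\bigl(\tfrac{b(1-x)}{1-b}-x\bigr)$ vanishes at $a$ and $b$, so $\psi$ inherits the smoothness of each linear piece at the gluing points and hence lies in $C^\infty(\overline{\dom})$, completing the construction.
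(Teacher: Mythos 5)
Your construction is correct, and all three properties check out: the existence of $[a,b]\subset \dom_1$ with $0<a<b<1$ is guaranteed because $\dom_1$ is a nonempty open subset of $(0,1)$; the flatness of $\chi$ to all orders at $a$ and $b$ makes the correction term $\chi(x)\bigl(\tfrac{b(1-x)}{1-b}-x\bigr)$ vanish to infinite order there (Leibniz rule), so the glued function is genuinely $C^\infty(\overline{\dom})$ and moreover $\psi_x(a)=1$, $\psi_x(b)=-\tfrac{b}{1-b}$; positivity on $(a,b)$ follows from the convex-combination argument since both $x$ and $\tfrac{b(1-x)}{1-b}$ are positive there; and $\overline{\dom}\setminus\dom_1\subset[0,a]\cup[b,1]$, where $|\psi_x|$ is bounded below by $\min\{1,\tfrac{b}{1-b}\}>0$. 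The paper itself gives no argument: it simply invokes the known Fursikov--Imanuvilov lemma, whose standard proof is designed for arbitrary dimension (where one cannot write the weight explicitly and instead perturbs a function with nondegenerate critical points, pushing them into the observation region). Your approach exploits the one-dimensional setting $\dom=(0,1)$ to give a completely explicit, self-contained weight out of two linear pieces and a smooth transition; this is more elementary and makes the nonvanishing of $\psi_x$ outside $\dom_1$ visible by inspection, at the price of not generalizing to higher dimensions --- which is irrelevant here and is consistent with the remarks in \Cref{sec:further} that the multi-dimensional case requires different tools.
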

Following \cite{Gue07}, for some positive constants $k, m, \mu$, where $k>m$ and $m>3$, we define
\begin{equation}\label{eq:weights}
\alpha_m(x,t)=\frac{e^{\mu(\psi(x)+c_2)}-e^{\mu c_1}}{t^m(T-t)^m}, \quad \phi_m(x,t)=\frac{e^{\mu(c_2+\psi(x))}}{t^m(T-t)^m}
\end{equation}
where
\begin{equation*}
c_1=k\left(\tfrac{m+1}{m}\right)\|\psi\|_{\infty} \quad\text{and}\quad c_2=k\|\psi\|_\infty.
\end{equation*}

We define the weights in this way to fulfill the requirement that both estimates must have the same weight (cf. \cite{cmp2}).

In the remainder of this section, we set $\theta=e^{\lambda \alpha_m}$ and in order to abridge the estimates, we use the following notation
\begin{gather}\label{eq:ab_rhs_KS}
I_{KS}(p):=\esp\int_{Q}\theta^2\lambda\phi_m\left(|p_{xxx}|^2+\lambda^2\phi_m^2|p_{xx}|^2+\lambda^4\phi_m^4|p_x|^2+\lambda^6\phi^6_m|p|^2\right)\dx\dt, \\
I_{H}(q):=\esp\int_{Q}\theta^2\lambda\phi_m\left[|q_x|^2+\lambda^2\phi_m^2|q|^2\right]\dx\dt.
\end{gather}

The Carleman inequality for the forward stochastic KS system we shall use reads as follows.
\begin{lem}\label{lem:carleman_KS}
Let  $f\in L^2_{\mathcal F}(0,T;L^2(\dom))$ and $F\in L^2_{\mathcal F}(0,T;H^2(\dom))$ be given. There exist positive constants $C$, $\mu_1$ and $\lambda_0$, such that for any $\mu\geq \mu_0$, any $\lambda\geq\lambda_0(\mu)$ and any $p_0\in L^2(\Omega;\mathcal F_0; L^2(\dom))$, the solution $p$ to
\begin{equation*}
\begin{cases}
\d p +p_{xxxx}\dt=f\dt +F \d W(t) &\text{in }Q, \\
p=p_x=0 &\text{in }\Sigma, \\
p(x,0)=p_0 &\text{in } \dom,
\end{cases}
\end{equation*}
satisfies
\begin{equation}\label{eq:car_KS}
\begin{split}
I_{KS}(p)\leq C& \esp \left( \int_{Q}\theta^2 |f|^2\dx\dt+\int_{Q}\theta^2\lambda^4\phi_m^4|F|^2\dx\dt+\int_{Q}\theta^2\lambda^2\phi_m^2|F_x|^2\dx\dt \right. \\
&\quad \left. +\int_{Q}\theta^2 |F_{xx}|^2 \dx\dt +  \int_{Q_{\dom_0}} \theta^2\lambda^7\phi_m^7|p|^2 \dx\dt \right).
\end{split}
\end{equation}
\end{lem}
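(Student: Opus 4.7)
The plan is to adapt the standard conjugation method for Carleman estimates to the stochastic fourth-order operator, following the strategy developed in \cite{Gao15,Gao18} but with the specific weights $\theta = e^{\lambda\alpha_m}$ prescribed by \eqref{eq:weights}. First, I would set $w := \theta p$ and apply It\^o's formula. Since $\theta$ is deterministic, no quadratic variation correction appears in $dw$, and one obtains
\begin{equation*}
dw = \left(\frac{\theta_t}{\theta}w - \theta p_{xxxx} + \theta f\right)\dt + \theta F\,\d W(t).
\end{equation*}
Expressing $\theta p_{xxxx}$ in terms of $w, w_x, w_{xx}, w_{xxx}, w_{xxxx}$ via $p = \theta^{-1}w$ produces a polynomial expression with coefficients that are polynomials in $\lambda$ with factors involving $\alpha_{m,x}$, $\phi_m$, and derivatives of $\psi$. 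One then decomposes the drift into a formally self-adjoint operator $L_1 w$ and a formally skew-adjoint operator $L_2 w$, plus a remainder treated as a lower-order perturbation.

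Next, I would establish a pointwise identity of the form $2 L_1 w \cdot L_2 w = (\text{spatial divergence}) + \partial_t(\ldots) + \mathcal{Q}(w)$, in which $\mathcal{Q}(w)$ is a quadratic form in $w, w_x, w_{xx}, w_{xxx}$ dominating, for $\mu$ and $\lambda$ large enough, the integrand of $I_{KS}(p)$ once one undoes the change of variable $w = \theta p$. Integrating in $(x,t)$, taking expectation, and using the boundary conditions $w = w_x = 0$ on $\Sigma$ (inherited from $p=p_x=0$) together with the vanishing of $\theta$ as $t\to 0^+$ and $t\to T^-$ discards all boundary and initial/terminal contributions. The martingale part coming from $\theta F\,\d W(t)$ has zero expectation, but its quadratic variation, which enters through the It\^o formula applied to the squared quantities appearing in the identity, generates precisely the RHS terms $\esp\int_Q \theta^2\big(\lambda^4\phi_m^4|F|^2 + \lambda^2\phi_m^2|F_x|^2 + |F_{xx}|^2\big)\dx\dt$; the three different weights correspond to the three different orders of $x$-derivatives of $F$ produced by the successive integrations by parts.

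At this stage, the local RHS contribution is an integral over $\dom_1 \subset\subset \dom_0$ involving all derivatives of $p$ up to order three. To reduce it to the single term $\esp\int_{Q_{\dom_0}}\theta^2\lambda^7\phi_m^7|p|^2\dx\dt$ announced in \eqref{eq:car_KS}, I would use a standard cutoff/integration-by-parts argument: pick $\eta \in C_c^\infty(\dom_0)$ with $\eta\equiv 1$ on $\dom_1$, and multiply the equation by $\eta\theta^2\lambda^{a_k}\phi_m^{a_k}\partial_x^{2k}p$ for appropriate exponents $a_k$ and $k=1,2$, then integrate by parts to transfer derivatives onto the weight and onto $\eta$. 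Choosing $\lambda$ large enough then allows Young's inequality to absorb the unwanted high-order local terms into the LHS of the Carleman estimate, leaving only the $\lambda^7\phi_m^7|p|^2$ contribution supported on $\dom_0$.

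The main obstacle is the precise bookkeeping in the second step: every time one closes a perfect differential in $t$ (needed to exploit the decay of $\theta$ at the endpoints) for a quantity containing derivatives of $w$, the It\^o formula reinjects a quadratic-variation term involving $\theta F$ and its $x$-derivatives, which, after the integrations by parts in $x$ used to build $\mathcal{Q}(w)$, must come out with weights no larger than $\lambda^4\phi_m^4$, $\lambda^2\phi_m^2$, and $1$ for $|F|^2$, $|F_x|^2$, and $|F_{xx}|^2$ respectively. Matching these weights consistently with the positive quadratic form generated by $2L_1 w\cdot L_2 w$ is the technical core of the argument, and is ultimately what dictates the structure of the Carleman functional $I_{KS}(p)$.
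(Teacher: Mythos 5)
Your overall strategy coincides with the one the paper relies on: the paper does not reprove \Cref{lem:carleman_KS} from scratch but invokes the weighted-identity (conjugation) proof of \cite{Gao15}, which is precisely the $w=\theta p$ change of variable, the decomposition into self-adjoint and skew-adjoint parts, and the It\^{o} corrections that generate the $|F|^2$, $|F_x|^2$, $|F_{xx}|^2$ terms with the weights $\lambda^4\phi_m^4$, $\lambda^2\phi_m^2$, $1$ that you describe; so in spirit the two arguments are the same.

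Two points in your sketch are nevertheless too quick. First, the assertion that the conditions $w=w_x=0$ on $\Sigma$, together with the vanishing of $\theta$ as $t\to 0^+,T^-$, ``discard all boundary contributions'' is not correct as stated: after the integrations by parts for the fourth-order operator, boundary terms involving the traces of $w_{xx}$ (and products such as $w_{xx}w_{xxx}$) survive, and one must verify that they carry a favorable sign; this is exactly where the construction of $\psi$ in \Cref{lem:weight_fursi} ($\psi(0)=\psi(1)=0$, $\psi>0$ in $\dom$, hence $\psi_x$ of definite sign at each endpoint) enters, and it is a genuine step of the proof, not a consequence of the clamped conditions alone. Second, the actual content of the paper's argument for this lemma is the adaptation of \cite{Gao15} from the weights with $m=1$, $c_1=5$, $c_2=3$ to the weights \eqref{eq:weights} with general $m>3$ (chosen so that the same weight can be shared with \Cref{lem:carleman_H}); this requires re-checking the time-derivative estimates $|\partial_t\alpha_m|\leq CT\phi_m^{1+1/m}$, $|\partial_{tt}\alpha_m|\leq CT^2\phi_m^{1+2/m}$ and the estimates on the mixed derivatives $\alpha_{xt}$, $\alpha_{xxt}$, $\alpha_{xxxt}$, as well as the remark that the $|p_{xxx}|^2$ term can be retained on the left-hand side of \eqref{eq:car_KS}. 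Your sketch uses the prescribed weights but never addresses why the identity still closes for general $m$, which is the one point the paper actually argues; the remainder of your outline (in particular the cutoff/integration-by-parts reduction of the local terms to $\lambda^7\phi_m^7|p|^2$ on $\dom_0$) is standard and consistent with the cited proof.
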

The proof of \Cref{lem:carleman_KS} is essentially given in \cite{Gao15}. Actually, the authors prove inequality \eqref{eq:car_KS} for slightly different weight functions, that is, they take $m=1$, $c_2=3$ and $c_1=5$ in \eqref{eq:weights}. However, a closer inspection shows that their proof can be adapted to our case just by considering that $|\partial_t\alpha_{m}|\leq CT\phi^{1+\frac{1}{m}}$ and $|\partial_{tt}\alpha_{m}|\leq CT^2\phi^{1+\frac2m}$ and changing a little bit the estimates regarding $\alpha_{xt}, \alpha_{xxt}$, $\alpha_{xxxt}$ and so on in \cite[pp. 487]{Gao15}. It is also important to mention that we have kept the term containing $p_{xxx}$ in the left-hand side of \eqref{eq:car_KS} (see eq. \eqref{eq:ab_rhs_KS}) as it will be useful later. 

On the other hand, we have the following inequality for the heat equation with non-homogeneous boundary conditions. 

\begin{lem}\label{lem:carleman_H}
Let $g_1, G\in L^2_{\mathcal F}(0,T;L^2(\dom))$ and $g_2\in L^2_{\mathcal F}(0,T;L^2(\partial \dom))$ be given. There exist positive constants $\mu_1$, $\lambda_1$ and $C$, such that for any $\mu\geq \mu_1$, any $\lambda\geq \lambda_1(\mu)$ and any $q_0\in L^2(\Omega,\mathcal F_0;L^2(\dom))$, the solution $q$ to 
\begin{equation*}
\begin{cases}
\d q-q_{xx}\dt=g_1\dt+G\, \d W(t) &\textnormal{in }Q, \\
q_x=g_2 &\textnormal{on } \Sigma, \\
q(x,0)=q_0 &\textnormal{in } \dom,
\end{cases}
\end{equation*}
satisfies
\begin{equation}\label{eq:car_H}
\begin{split}
I_{H}(q)\leq C&\esp\left( \int_{\Sigma}\theta^2\lambda\phi_m|g_2|^2\d \sigma\dt+ \int_{Q_{\dom_0}}\theta^2\lambda^3\phi_m^3|q|^2\dx\dt \right. \\ 
&\quad \left. +  \int_{Q}\theta^2\left(|g_1|^2+\lambda^2\phi_m^2|G|^2\right)\dx\dt \right).
\end{split}
\end{equation}
\end{lem}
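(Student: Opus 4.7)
The plan is to adapt the classical Fursikov--Imanuvilov argument for the deterministic heat equation with non-homogeneous Neumann boundary data to the stochastic setting, using Itô's formula in place of the deterministic chain rule, and reusing the exact same weights $(\alpha_m,\phi_m)$ as in \Cref{lem:carleman_KS} so that both inequalities can eventually be added up with compatible coefficients.

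The first step is conjugation: set $w := \theta q$ with $\theta = e^{\lambda\alpha_m}$, and compute $\d w$ via Itô's formula. This yields a stochastic identity of the form $\mathcal L_1 w + \mathcal L_2 w = (\text{drift source})\,\dt + (\text{noise source})\,\d W(t)$, where $\mathcal L_1$ is the self-adjoint-in-$x$ part ($-w_{xx}$ plus the dominant weighted potential of order $\lambda^2\mu^2\phi_m^2\psi_x^2\, w$) and $\mathcal L_2$ is the skew-adjoint-in-$x$ part plus $\partial_t$. The second step is to take the $L^2(Q)$-norm of the drift identity, take expectation, and expand the bracket $2\esp\int_Q (\mathcal L_1 w)(\mathcal L_2 w)\,\dx\,\dt$ via integrations by parts in $x$ and $t$. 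For $\mu$ and $\lambda$ large enough this produces the positive left-hand side $\esp\int_Q \theta^2\lambda^3\phi_m^3|q|^2 + \theta^2\lambda\phi_m|q_x|^2\,\dx\,\dt$, while the stochastic term, handled through Itô's formula applied to $|w|^2$ and the Itô isometry, generates exactly the $\esp\int_Q\theta^2\lambda^2\phi_m^2|G|^2\,\dx\,\dt$ contribution on the right-hand side.

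The third step is the treatment of boundary terms. Each integration by parts in $x$ produces traces at $\{0,1\}$. Using that $\psi(0)=\psi(1)=0$ and $|\psi_x|>0$ on $\partial\dom$ (\Cref{lem:weight_fursi}), these boundary contributions split into a piece proportional to $\theta^2\lambda\phi_m|q_x|^2=\theta^2\lambda\phi_m|g_2|^2$, which is kept on the right-hand side, and extraneous $|q|^2$ traces, which are controlled via a trace/interpolation inequality against interior norms and then reabsorbed into the left-hand side by choosing $\lambda$ large. The fourth step is the standard cutoff localization: pick $\eta\in C^\infty_c(\dom_0)$ with $\eta\equiv 1$ on $\dom_1$, multiply the equation by $\eta^2\theta^2\lambda^3\phi_m^3 q$, and apply Itô's formula to $|\eta\theta\lambda^{3/2}\phi_m^{3/2}q|^2$ to replace the remaining global term $\int_Q\theta^2\lambda^3\phi_m^3|q|^2$ by the local observation $\int_{Q_{\dom_0}}\theta^2\lambda^3\phi_m^3|q|^2$, giving \eqref{eq:car_H} after translating back from $w$ to $q$.

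The main obstacle I anticipate is the careful bookkeeping of the Neumann boundary integrals: unlike the Dirichlet case of the stochastic Carleman estimates in \cite{TZ09}, the trace of $q$ itself on $\Sigma$ is not known, so one must track the exact powers of $\lambda$ and $\phi_m$ generated by every integration by parts to ensure that only the $|g_2|^2$ trace truly survives on the right-hand side while all other traces of $q$ are absorbed for $\lambda$ large. A secondary technical constraint is that the weights $\alpha_m,\phi_m$ here are tuned through the particular choices of $c_1$, $c_2$ and $m$ in order to be usable simultaneously with \Cref{lem:carleman_KS}, so it has to be rechecked that the standard Fursikov pointwise bounds on derivatives of $\alpha_m$ and $\phi_m$ still deliver the correct signs and magnitudes for the absorption arguments above.
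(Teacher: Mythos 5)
Your plan diverges from the paper at the decisive point, and the divergence introduces a genuine gap. The paper does not prove \eqref{eq:car_H} by conjugation and a pointwise weighted identity; it invokes \cite{yan18}, where the estimate is obtained by a \emph{duality} argument, and the remark immediately after the lemma explains why this matters: the duality proof is precisely what allows the diffusion term $G$ to enter the right-hand side with no spatial derivatives and to be assumed only in $L^2_{\mathcal F}(0,T;L^2(\dom))$. Your route is the pointwise technique of \cite{TZ09}, and for a \emph{forward} stochastic equation that technique unavoidably produces derivatives of $G$: when you expand the cross term $2\esp\int_Q(\mathcal L_1 w)(\mathcal L_2 w)\dx\dt$, the product of the "$\d w$" part of $\mathcal L_2 w$ with the $-w_{xx}$ part of $\mathcal L_1 w$ must be treated with Itô's formula for $w_x$ (equivalently for $\int_\dom w\,w_{xx}\dx$ after integrating by parts), and this generates the quadratic variation $\esp\int_Q\left|(\theta G)_x\right|^2\dx\dt$, hence a term in $G_x$ — not merely the $\esp\int_Q\theta^2\lambda^2\phi_m^2|G|^2\dx\dt$ contribution you claim comes from Itô applied to $|w|^2$ and the isometry. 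Under the lemma's hypothesis $G\in L^2_{\mathcal F}(0,T;L^2(\dom))$ this term is not even defined, and even for smooth $G$ your computation would only yield a strictly weaker inequality carrying an extra $\esp\int_Q\theta^2|G_x|^2$-type term; this is exactly the phenomenon visible in \Cref{lem:carleman_KS}, whose right-hand side (proved by the pointwise method in \cite{Gao15}) contains $F_x$ and $F_{xx}$. So your step two does not close, and the statement as written cannot be reached along your path.

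To repair it you would have to either (i) weaken the lemma (assume $G\in L^2_{\mathcal F}(0,T;H^1(\dom))$ and keep the $|G_x|^2$ term) — but then its use in Step~1 of the proof of \Cref{thm:car_estimate_KSS}, where it is applied with $G=(d_2u)_x+d_3v_x$, would bring a global $v_{xx}$ term whose weight must be checked carefully against the $\lambda\phi_m\theta^2|v_{xx}|^2$ term controlled by $I_H(v_x)$ — or (ii) follow the paper's actual route: adapt the duality proof of \cite{yan18} (in the spirit of \cite{Liu14}), in which the weighted estimate is transferred from an auxiliary adjoint problem by transposition, so that $G$ is only paired against the adjoint state and no $G_x$ ever appears; the only adaptation then needed is the change of weights \eqref{eq:weights} (general $m>3$, the choice of $c_1,c_2$), as the paper indicates. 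Your handling of the Neumann boundary term $g_2$ and the final localization are secondary issues compared with this structural point about the noise term.
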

The proof of \Cref{lem:carleman_H} can be found in \cite{yan18}. As for the case of the KS equation, the proof can be adapted by making the corresponding changes. Notice that in this case, there are not spatial derivatives of the diffusion term $G$ on the right-hand side of estimate \eqref{eq:car_H}. This comes from the fact that the proof is done by means of a duality argument, instead of a pointwise estimate technique as used, for instance, in \cite{TZ09}.

\subsection{Carleman estimate for the adjoint system}

Now, we are in position to prove the main result of this section, that is to say, a Carleman estimate for system \eqref{eq:adjoint_KSS} with only one observation on the right-hand side. In more detail, we have the following result.
\begin{theo}\label{thm:car_estimate_KSS}
Assume that $d_i\in L^2_{\mathcal F}(0,T;W^{2,\infty}(\dom))$ for $i=1,2$, $d_3\in L^\infty_{\mathcal F}(0,T;\mathbb R)$ and let $m>3$ be given. Then, there exists $C>0$ and two constants $\mu_2,\lambda_2>0$ such that for any $\mu\geq \mu_2$, $\lambda\geq \lambda_2$ and any $(u_0,v_0)\in L^2(\Omega,\mathcal F_0;L^2(\Omega)^2)$, the solution $(u,v)$ to \eqref{eq:adjoint_KSS} verifies
\begin{equation}\label{eq:car_final}
\esp\left(\int_{Q}\theta^2\lambda^3\phi_m^3|v_x|^2\dx\dt+\int_{Q}\theta^2\lambda^7\phi^7_m|u|^2\dx\dt\right) \leq {C\esp\left(\int_{Q_{\dom_0}}\theta^2\lambda^{23}\phi_m^{23}|u|^2\dx\dt\right)}.
\end{equation}
\end{theo}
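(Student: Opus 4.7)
The plan is to combine the two Carleman estimates \Cref{lem:carleman_KS} and \Cref{lem:carleman_H} applied to $u$ and $v$ respectively, taking advantage of the fact that they share the same weight $\theta=e^{\lambda\alpha_m}$ built from $\psi$ in \Cref{lem:weight_fursi}, so that the two right-hand sides can simply be added. The coupling terms $v_x$ in the $u$-equation and $u_x$ in the $v$-equation, together with the stochastic coefficients $d_i u$, $d_i v$, will be treated as forcing terms and absorbed by taking $\lambda,\mu$ sufficiently large. The two remaining obstacles, each of which must be converted into a local observation on $u$ alone, are the boundary trace $v_x|_\Sigma$ produced by the Neumann-type heat Carleman and the local observation on $v$ that it leaves behind.

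The first step is to apply \Cref{lem:carleman_KS} to the first equation of \eqref{eq:adjoint_KSS} rewritten as $\d u+\gamma u_{xxxx}\dt = \gamma^{-1}(v_x - u_{xxx} - u_{xx})\dt + \gamma^{-1}d_1 u\,\d W$. The $|f|^2$ term on the right-hand side of \eqref{eq:car_KS} produces $\theta^2|v_x|^2$, which we keep, together with $\theta^2(|u_{xx}|^2 + |u_{xxx}|^2)$, which is absorbed into $I_{KS}(u)$ because $\phi_m$ is bounded below in $Q$. The $F$-terms of \eqref{eq:car_KS} with $F = \gamma^{-1} d_1 u$ are polynomial in $u,u_x,u_{xx}$ with coefficients depending on $d_1,\partial_x d_1,\partial_x^2 d_1$, all bounded by the assumption $d_1\in L^\infty_{\mathcal F}(0,T;W^{2,\infty}(\dom))$, and are likewise absorbed into $I_{KS}(u)$ for $\lambda$ large. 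The outcome is
\begin{equation*}
I_{KS}(u) \le C\,\esp\int_Q \theta^2|v_x|^2\dx\dt + C\,\esp\int_{Q_{\dom_0}}\theta^2\lambda^7\phi_m^7|u|^2\dx\dt.
\end{equation*}
Similarly, applying \Cref{lem:carleman_H} to $v$ with $g_1 = v_x + u_x$, $G = d_2 u + d_3 v$, and boundary data $g_2 = v_x|_\Sigma$ (well-defined because $v|_\Sigma=0$), then absorbing the $v$-contributions into $I_H(v)$ and the $u$-contributions into a small fraction of $I_{KS}(u)$, yields
\begin{equation*}
I_H(v) \le C\,\esp\int_\Sigma \theta^2\lambda\phi_m|v_x|^2\,\d\sigma\dt + C\,\esp\int_{Q_{\dom_0}}\theta^2\lambda^3\phi_m^3|v|^2\dx\dt + \tfrac{1}{4} I_{KS}(u).
\end{equation*}
Summing the two inequalities and choosing $\lambda$ large enough so that $\esp\int_Q \theta^2|v_x|^2$ is absorbed into $I_H(v)$ reduces the problem to controlling the boundary integral on $v_x$ and the local observation on $v$ by a local observation on $u$.

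The heart of the argument is the treatment of these two obstacles. For the local $v$-observation, adapting the strategy of \cite{cmp2}, I would multiply the $u$-equation of \eqref{eq:adjoint_KSS} by $\chi\theta^2\phi_m^{\alpha}v$, with $\chi\in C_c^\infty(\dom_0)$ identically $1$ on some $\dom_1\subset\subset\dom_0$, apply Itô's formula to the product $\chi\theta^2\phi_m^{\alpha}uv$ so that the stochastic integral vanishes in expectation, and integrate by parts in $x$ four times in order to transfer every spatial derivative of $u$ onto the cutoff weight. The resulting identity expresses the weighted local $v$-integral as a sum of weighted cross-products of $u$ and $v$ (or $v_x$) supported in $\dom_0$, each of which Young's inequality splits into a small multiple of $I_{KS}(u)+I_H(v)$ plus a purely local observation on $u$ with a higher power of $\phi_m$. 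For the boundary trace, the identity $v_x|_\Sigma = (\gamma u_{xxxx}+u_{xxx}+u_{xx})|_\Sigma$ obtained by restricting the $u$-equation to $\Sigma$ (using $u=u_x=0$ on $\Sigma$, so that the $\d u$ and $d_1 u\,\d W$ terms vanish there) rewrites the boundary integral in terms of boundary traces of derivatives of $u$; multiplying by an appropriate cutoff and integrating over a boundary strip then circumvents the pointwise evaluation of $u_{xxxx}$ and controls the whole boundary term by a small fraction of $I_{KS}(u)$.

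Finally, $I_H(v)$ only directly controls $|v_x|^2$ with weight $\theta^2\lambda\phi_m$, whereas \eqref{eq:car_final} requires the stronger $\theta^2\lambda^3\phi_m^3$; to bridge this gap, I would use $v|_\Sigma = 0$ and integrate by parts in $x$,
\begin{equation*}
\int_0^1 \theta^2\lambda^3\phi_m^3|v_x|^2\dx = -\int_0^1 (\theta^2\lambda^3\phi_m^3)_x\, v\, v_x\,\dx - \int_0^1 \theta^2\lambda^3\phi_m^3\, v\, v_{xx}\,\dx,
\end{equation*}
then express $v_{xx}$ from the $v$-equation and close against $I_H(v)$, $I_{KS}(u)$, and the local observation on $u$ already obtained. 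Keeping track of the powers of $\lambda$ produced by all the Young's inequalities in the absorption steps yields the final exponent $23$ on the right-hand side of \eqref{eq:car_final}. The main obstacle I expect is the handling of the boundary trace: the identity $v_x|_\Sigma = (\gamma u_{xxxx}+u_{xxx}+u_{xx})|_\Sigma$ must be combined with the stochastic structure in such a way that the integrations by parts eliminating the trace of $u_{xxxx}$ do not introduce traces of stochastic integrals that cannot be controlled by $I_{KS}(u)$.
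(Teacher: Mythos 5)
Your overall architecture (sum the two Carleman estimates with the common weight, absorb the couplings for large $\lambda$, estimate the remaining local $v$-term through an It\^{o} computation on a weighted product, handle a boundary term) resembles the paper's, but the crucial divergence — applying the heat Carleman to $v$ rather than to $v_x$ — creates gaps that your proposal does not close. First, $I_H(v)$ only yields the weight $\theta^2\lambda\phi_m|v_x|^2$, and your final bridging step does not upgrade it to $\theta^2\lambda^3\phi_m^3|v_x|^2$: after the integration by parts, the term $\int(\theta^2\lambda^3\phi_m^3)_x v\,v_x$ is of size $\lambda^4\phi_m^4\theta^2|v||v_x|$, and any Young splitting leaves a global term $\lambda^5\phi_m^5\theta^2|v|^2$ (or worse), which exceeds the $\lambda^3\phi_m^3\theta^2|v|^2$ that $I_H(v)$ controls; moreover ``expressing $v_{xx}$ from the $v$-equation'' is not a pointwise algebraic substitution for a stochastic equation — the $\d v$ and diffusion terms must be handled through It\^{o}'s formula. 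The paper avoids all of this by differentiating the $v$-equation and applying the Neumann-type estimate \eqref{eq:car_H} directly to $v_x$, which places $\lambda^3\phi_m^3\theta^2|v_x|^2$ and, importantly, $\lambda\phi_m\theta^2|v_{xx}|^2$ on the left-hand side; the latter is needed later to absorb the $v_{xx}$ cross-terms arising in the local estimates, and your scheme has no global control of $v_{xx}$ at all.

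Second, your boundary treatment is a genuine gap: the identity $v_x|_\Sigma=(\gamma u_{xxxx}+u_{xxx}+u_{xx})|_\Sigma$ forces you to control boundary traces of $u_{xxxx}$, which are far beyond what $I_{KS}(u)$ (containing at most $u_{xxx}$ in weighted interior $L^2$, see \eqref{eq:ab_rhs_KS}) can absorb — indeed the paper explicitly notes in its Step 4 that even a \emph{local interior} term in $u_{xxx}$ cannot be handled by integration by parts for lack of a global $u_{xxxx}$ term, and needs the dedicated It\^{o}-based \Cref{lem:local_u3x}; a ``cutoff on a boundary strip'' would regenerate interior $u_{xxxx}$ terms with the same problem. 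The paper's boundary term is instead $|v_{xx}|$ on $\Sigma$ (coming from the Carleman for $v_x$), and it is removed by trace and interpolation inequalities combined with a maximal-regularity estimate for the auxiliary weighted variable $\widetilde v=g(t)v$ — the step where the hypothesis $m>3$ and the precise structure of the weights \eqref{eq:weights} enter, and which is entirely absent from your plan. Finally, your local estimate via It\^{o} on $\chi\theta^2\phi_m^{\alpha}uv$ cannot transfer four derivatives off $u$ onto the product, since this produces $v_{xxx}$ and $v_{xxxx}$, which do not exist for a solution of a second-order stochastic parabolic equation; the paper works with the product $\zeta u v_x$ and stops at $v_{xx}$, absorbing it by the global $v_{xx}$-term that only the $v_x$-Carleman provides.
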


The outline of the proof follows the strategy of \cite{cmp2}, but since we are dealing with stochastic PDEs, some extra arguments are needed to conclude. For clarity, we have divided the proof in four steps which can be summarized as follows:
\begin{itemize}
\item First part: we look for the equation satisfied by $v_x$. As we will see, this equation has not prescribed boundary conditions, but we can apply estimate \eqref{eq:car_H} to deduce an inequality with some boundary terms and a local estimate of $v_x$. Using trace and interpolation estimates, we can get rid of the boundary terms. 
\item Second part: we apply Carleman inequality \eqref{eq:car_KS} to the first equation of system \eqref{eq:adjoint_KSS} and add it to the estimate in the previous step. By using the variable $\lambda$ we will absorb the lower order terms.
\item Third part: here, we will estimate the local term of $v_x$ by using the first equation \eqref{eq:adjoint_KSS} and leading to several local terms depending on $u$ and their spatial derivatives.
\item Fourth part: this step is divided in two: first, using the equation verified by $u$, we estimate the local term corresponding to the the highest-order derivative coming from the previous stage. Then, integrating by parts several times, we deduce the desired result.
\end{itemize} 

\begin{proof}[Proof of Theorem \ref{thm:car_estimate_KSS}]
Consider sets $\dom_i\subset\dom$, $i=4,5$, such that $\dom_{5}\subset\subset\dom_4\subset\subset \dom_0$. In the following, $C$ stands for a generic positive constant that may vary from line to line.  {We also consider that the initial datum $(u_0,v_0)$ is smooth enough (as usual, the general case follows from a density argument)}.

\subsubsection*{Step 1. Carleman estimate for $v_x$}
A direct computation shows that $v_x$ verifies the equation
\begin{equation*}
\d v_x-\Gamma (v_x)_{xx}\dt=[u_{xx}+(v_x)_x]\dt+[(d_2u)_x+d_3 v_x]\d W(t) \quad\text{in }Q,
\end{equation*}
with no prescribed boundary conditions. Hence, applying estimate \eqref{eq:car_H} (for the set $\dom_4$) to this equation yields
\begin{align}\notag
I_H(v_x)\leq C&\esp \left(\int_{\Sigma}\theta^2\lambda\phi_m|v_{xx}|^2\d\sigma\dt+\int_{Q_{\dom_4}}\theta^2\lambda^3\phi^3_m|v_x|^2\dx\dt\right) \\ \label{eq:car_init}
&\quad +C\esp\left(\int_{Q} \theta^2|u_{xx}+v_{xx}|^2\dx\dt+\int_{Q} \theta^2\lambda^2\phi^2_m|(d_2u)_x+d_3 v_x|^2\dx\dt\right).
\end{align}
Observe that 
\begin{align*}
\esp \left(\int_{\Sigma}\theta^2\lambda \phi_m|v_{xx}|^2\d\sigma\dt\right)&= \esp\left(\int_{0}^{T}\theta^2(1,t)\lambda\phi_m(1,t)|v_{xx}(1,t)|^2\dt\right) \\
&\quad -\esp\left( \int_{0}^{T}\theta^2(0,t)\lambda\phi_m(0,t)|v_{xx}(0,t)|^2\dt\right).
\end{align*}
For compactness, we will maintain the notation in \eqref{eq:car_init}.

Using that $d_3\in L^\infty_{\mathcal F}(0,T;\mathbb R)$ and taking $\lambda_1$ large enough, we can simplify the above expression as follows
\begin{equation}\label{eq:est_inter_1}
\begin{split}
I_H(v_x)\leq &C\esp\left(\int_{\Sigma}\theta^2\lambda\phi_m|v_{xx}|^2\d\sigma\dt+\int_{Q_{\dom_4}}\theta^2\lambda^3\phi^3_m|v_x|^2\dx\dt\right) \\
&+ C\esp\left(\int_{Q} \theta^2|u_{xx}|^2\dx\dt+\int_{Q} \theta^2\lambda^2\phi^2_m|(d_2u)_x|^2\dx\dt\right).
\end{split}
\end{equation}
for any $\lambda\geq \lambda_1$. 

Let us focus now on the first term in the right-hand side of the previous inequality. Observe that thanks to the properties of the weight function $\psi$ (see \Cref{lem:weight_fursi}), the functions $\theta$ and $\phi_m$ achieve its minimum at the boundary points, that is
\begin{align} \label{def_phi_star}
&\phi_m(0,t)=\phi_m(1,t)=\min_{x\in\overline\dom}\phi_m=:\phi_m^\star(t), \\ \label{def_theta_star}
&\theta(0,t)=\theta(1,t)=\min_{x\in\overline\dom}\theta=:\theta^\star(t).
\end{align}
Using this notation and employing classical trace and Sobolev embedding theorems, we can obtain that
\begin{equation*}
\esp\left(\int_{\Sigma}(\theta^\star)^2\lambda\phi_m^\star|v_{xx}|^2\d\sigma\dt\right) \leq C\esp\left(\int_{0}^{T}(\theta^\star)^2\lambda\phi_m^\star\|v\|_{H^{5/2+\epsilon}(\dom)}^2\dt\right), \quad \forall \epsilon>0.
\end{equation*}
Let us fix $0<\epsilon<1/2$. Applying the classical interpolation inequality in Sobolev spaces
\begin{equation*}
\|v\|_{H^{t_\sigma}}\leq \|v\|_{H^{t_0}}^{1-\sigma}\, \|v\|^{\sigma}_{H^{t_1}},
\end{equation*}
where $t_0,t_1\in\mathbb R$ and $t_\sigma=(1-\sigma)t_0+\sigma t_1$, $\sigma\in[0,1]$, with $t_0=3$ and $t_1=1$ we get
\begin{equation*}
\esp\left(\int_{\Sigma}(\theta^\star)^2\lambda\phi_m^\star|v_{xx}|^2\d\sigma\dt\right) \leq C \esp \left(\int_{0}^{T}(\theta^\star)^2\lambda\phi_m^\star\left(\|v\|_{H^3(\dom)}^{1-\sigma}\|v\|^\sigma_{H^{1}(\dom)}\right)^2\dt\right)
\end{equation*}
for some $\sigma=\sigma(\epsilon)\in (0,\frac{1}{4})$. 

We can conveniently rewrite the right-hand side of the above expression as
\begin{equation*}
\esp\left(\int_{0}^{T}\left[(\theta^\star)^{2\sigma}(\lambda\phi_m^\star)^{3\sigma}\|v\|^{2\sigma}_{H^1(\dom)}\right]\left[(\theta^\star)^{2-2\sigma}(\lambda\phi_m^\star)^{1-3\sigma}\|v\|_{H^3(\dom)}^{2(1-\sigma)}\right]\dt\right)
\end{equation*}
and using Young inequality with $p=1/\sigma$ and $q=1/(1-\sigma)$, we get for any $\delta>0$ 
\begin{align}\notag
\esp&\left(\int_{\Sigma}(\theta^\star)^2\lambda\phi_m^\star|v_{xx}|^2\d\sigma\dt\right) \\ \notag
&\leq \delta \esp\left(\int_{0}^{T}(\theta^\star)^2(\lambda\phi_m^\star)^3\|v\|^2_{H^1(\dom)}\dt\right)+C_\delta\esp\left(\int_{0}^{T}(\theta^\star)^2(\lambda\phi_m^\star)^{\frac{1-3\sigma}{1-\sigma}}\|v\|^2_{H^3(\dom)}\dt\right) \\ \label{eq:est_boundary}
&\leq \delta \esp\left(\int_{Q}\theta^2\lambda^3\phi_m^3|v_x|^2\dx\dt\right)+C_\delta\esp\left(\int_{0}^{T}(\theta^\star)^2(\lambda\phi_m^\star)^{\frac{1-3\sigma}{1-\sigma}}\|v\|^2_{H^3(\dom)}\dt\right),
\end{align}
where we have used the fact that $v=0$ on $\Sigma$ and definitions \eqref{def_phi_star}--\eqref{def_theta_star}.

Using estimate \eqref{eq:est_boundary} in \eqref{eq:est_inter_1} and taking $\delta>0$ small enough yields
\begin{equation}\label{eq:est_inter_1bis}
\begin{split}
I_H(v_x)\leq C&\esp \left ( \int_{0}^{T}(\theta^\star)^2(\lambda\phi_m^\star)^{\frac{1-3\sigma}{1-\sigma}}\|v\|^2_{H^3(\dom)}\dt+\int_{Q_{\dom_4}}\theta^2\lambda^3\phi^3_m|v_x|^2\dx\dt\right) \\
&\quad+ C\esp\left(\int_{Q} \theta^2|u_{xx}|^2\dx\dt+\esp\int_{Q} \theta^2\lambda^2\phi^2_m|(d_2u)_x|^2\dx\dt\right).
\end{split}
\end{equation}
Now, the task is to absorb the global term containing the $H^3$-norm. To this end, consider the function
\begin{equation*}
g(t)=\lambda^{\frac{1}{2}-\frac{1}{m}}\theta^\star(\phi_m^\star)^{\frac{1}{2}-\frac{1}{m}}
\end{equation*}
and define the change of variables $\widetilde v:=g(t)v$. Then, using It\^{o}'s formula, we see that $\widetilde v$ satisfies the equation
\begin{equation*}
\begin{cases}
\d \widetilde{v}=(\Gamma \widetilde{v}_{xx}+\widetilde{v}_x+g_t v+gu_x)\dt+(d_2g u+ d_3 \widetilde{v})\d W(t) &\text{in }Q, \\
\widetilde{v}=0 &\text{on } \Sigma, \\
\widetilde{v}(0)=0 &\text{in }\dom,
\end{cases} 
\end{equation*}
where we have used that $\lim_{t\to 0} g(t)=0$. Using classical energy estimates for stochastic parabolic equations (see, for instance, \cite[Proposition 2.1]{zhou92}), we have that $\widetilde v$ satisfies 
\begin{align}\notag
\sup_{0\leq t\leq T}&\esp\left(\|\widetilde{v}(t)\|^2_{H^2(\dom)}\right)+\esp\left(\int_{0}^{T}\|\widetilde{v}(t)\|^2_{H^3(\dom)}\dt\right) \\ \label{eq:est_H3}
&\leq C\esp \left(\int_{0}^{T}\|g_t v+g u_x\|_{H^1(\dom)}^2\dt+ \int_{0}^{T}\|d_2 g u\|_{H^2(\dom)}^2\dt\right).
\end{align}
We observe that if we choose $m>\frac{1-\sigma}{\sigma}$ then $\frac{1-3\sigma}{1-\sigma}<1-\frac{2}{m}$, thus, from the definition of $\widetilde{v}$, we obtain
\begin{equation*}
\esp\left(\int_{0}^{T}(\lambda\phi^\star_m)^{\frac{1-3\sigma}{1-\sigma}}(\theta^\star)^2\|v\|^2_{H^3(\dom)}\dt\right)\leq \esp \left(\int_{0}^{T}\|\widetilde v\|^2_{H^3(\dom)}\dt\right)
\end{equation*}
whence we have from \eqref{eq:est_H3}
\begin{align}\label{eq:est_H3_2}
\esp\left(\int_{0}^{T}(\lambda\phi_m^\star)^{\frac{1-3\sigma}{1-\sigma}}(\theta^\star)^2\|v\|^2_{H^3(\dom)}\dt\right)
 \leq C\esp \left(\int_{0}^{T}\|g_t v+g u_x\|_{H^1(\dom)}^2\dt+ \int_{0}^{T}\|d_2 g u\|_{H^2(\dom)}^2\dt\right).
\end{align}
We remark that choosing $m>\frac{1-\sigma}{\sigma}$ with $\sigma\in(0,\frac{1}{4})$ implies that $m>3$ which is consistent with the construction of the weights \eqref{eq:weights}.

Using once again that $v$ has homogeneous Dirichlet boundary conditions and since
\begin{equation}\label{eq:est_deriv_weight}
\left|\partial_t\left(\theta^\star[\phi_m^\star]^q\right)\right|\leq C\lambda(\phi_m^\star)^{1+\frac{1}{m}}(\theta^\star[\phi_m^\star]^q),
\end{equation}
for any integer $q>0$, we can bound in the right-hand side of \eqref{eq:est_H3_2} as follows
\begin{align*}\notag
\esp&\left(\int_{0}^{T}(\lambda\phi_m^\star)^{\frac{1-3\sigma}{1-\sigma}}(\theta^\star)^2\|v\|^2_{H^3(\dom)}\dt\right) \\ \notag
 &\leq C\esp \left(\int_{0}^{T}\lambda^{3-\frac{2}{m}}(\phi_m^\star)^{3}(\theta^\star)^2\|v_x(t)\|_{L^2(\dom)}^2 \dt +\int_{0}^{T}\|g u_x\|_{H^1(\dom)}^2\dt+ \int_{0}^{T}\|d_2g u\|_{H^2(\dom)}^2\dt\right) \\
&\leq C\esp \left(\int_Q \lambda^{3-\frac{2}{m}}\phi_m^{3}\theta^2|v_x|^2\dx\dt +\int_{0}^{T}\|g u_x\|_{H^1(\dom)}^2\dt+ \int_{0}^{T}\|d_2g u\|_{H^2(\dom)}^2\dt\right),
\end{align*}
where we have recalled \eqref{def_phi_star}--\eqref{def_theta_star}. 

For the last two terms in the above expression, it can be readily seen that since $u\in H^2_0(\dom)$ and $d_2\in L^\infty_{\mathcal F}(0,T;W^{2,\infty}(\dom))$, we have
\begin{align}\notag 
\esp&\left(\int_{0}^{T}(\lambda\phi_m^\star)^{\frac{1-3\sigma}{1-\sigma}}(\theta^\star)^2\|v\|^2_{H^3(\dom)}\dt\right)\leq C\esp \left(\int_Q \lambda^{3-2/m}\phi_m^{3}\theta^2|v_x|^2\dx\dt\right) \\ \label{eq:est_defi_H3} 
&+ C\|d_2\|^2_{L^\infty_{\mathcal{F}}(0,T;W^{2,\infty}(\dom))}\esp \left( \int_Q \lambda^{1-2/m}\theta^2\phi_m^{1-2/m}|u_{xx}|^2\dx\dt \right).
\end{align}
Observe that the power of $\lambda$ in the first term of the right-hand side is lower than its counterpart in the left-hand side of \eqref{eq:est_inter_1bis}. Hence, combining estimates \eqref{eq:est_defi_H3} and \eqref{eq:est_inter_1bis} and taking $\lambda$ large enough, we get
\begin{equation}\label{eq:est_heat_fin}
\begin{split}
I_H(v_x)\leq C\esp&\left(\int_{Q_{\dom_4}}\theta^2\lambda^3\phi^3_m|v_x|^2\dx\dt + \int_{Q} \lambda^{1-2/m}\theta^2\phi_m^{1-2/m}|u_{xx}|^2\dx\dt \right. \\
&\quad \left.+\int_{Q}\theta^2|u_{xx}|^2\dx\dt+\int_{Q} \theta^2\lambda^2\phi^2_m|(d_2u)_x|^2\dx\dt\right),
\end{split}
\end{equation}
for any $\lambda\geq \lambda_1$.

\subsubsection*{Step 2. Carleman estimate for $u$}
Fix $m>3$ coming from the previous step. We apply inequality \eqref{eq:car_KS} to the first equation of system \eqref{eq:adjoint_KSS}, note that both estimates have the same weight. We readily see that
\begin{align*}\notag
I_{KS}(u)\leq C &\esp\left(\int_{Q}\theta^2|v_x-u_{xx}-u_{xxx}|^2\dx\dt+\int_{Q}\theta^2\lambda^4\phi_m^4|d_1u|^2\dx\dt \right. \\
&\quad \left.+\int_{Q}\theta^2\lambda^2\phi_m^2\left|(d_1 u)_x\right|^2 +\int_{Q}\theta^2\left|(d_1 u)_{xx}\right|^2\dx\dt+\int_{Q_{\dom_4}} \theta^2\lambda^7\phi_m^7|u|^2\dx\dt\right).
\end{align*}
Using that $d_1\in L^\infty_{\mathcal F}(0,T;W^{2,\infty}(\dom))$ and taking $\lambda\geq \lambda_0$ large enough, we can absorb the lower order terms corresponding to the variable $u$, more precisely, 
\begin{align}\label{eq:est_ks_fin}
I_{KS}(u)\leq C &\esp\left(\int_{Q}\theta^2|v_x|^2\dx\dt+\int_{Q_{\dom_4}} \theta^2\lambda^7\phi_m^7|u|^2\dx\dt\right).
\end{align}

Adding up inequalities \eqref{eq:est_heat_fin} and \eqref{eq:est_ks_fin}, we take $\mu\geq \mu_2=\max\{\mu_0,\mu_1\}$ and $\lambda\geq \lambda_2=\max\{\lambda_0,\lambda_1\}$, we can absorb the remaining lower order terms to finally obtain
\begin{equation}\label{eq:car_locales_u_v}
I_H(v_x)+I_{KS}(u)\leq C\esp \left(\int_{Q_{\dom_4}}\theta^2\lambda^3\phi_m^3|v_x|^2\dx\dt+\int_{Q_{\dom_4}}\theta^2\lambda^7\phi_m^7|u|^2\dx\dt\right)
\end{equation}
for all $\lambda$ sufficiently large.

\subsubsection*{Step 3. Local energy estimate for $v$}
The main goal of this step is to estimate the local term corresponding to $v_x$. We follow the classical methodology introduced in \cite{deT00} but adapted to the stochastic setting.

Let us consider an open set $\dom_3$ such that $\dom_4\subset\subset\dom_3\subset\subset\dom_0$ and take a function $\eta\in C_0^\infty(\dom_3)$ such that $\eta\equiv 1$ in $\dom_4$. We define $\zeta:=\lambda^3\phi_m^3\theta^2\eta$ and apply It\^{o}'s formula to compute $\d(\zeta u v_x)$, 
%
%
from which we deduce
\begin{align}\notag 
\esp\left(\int_{Q}\zeta |v_x|^2\dx\dt\right)&=-\esp\left(\int_{Q}\zeta_t u_{v_x}\dx\dt\right)+\esp\left(\int_Q \zeta(\gamma v_xu_{xxxx}+v_xu_{xxx}+v_{x}u_{xx})\dx\dt\right) \\ \notag
&\quad - \esp\left(\int_Q\zeta(\Gamma uv_{xxx}+uu_{xx}+uv_{xx})\dx\dt\right) \\ \notag
&\quad - \esp\left(\int_Q \zeta[d_1u (d_2 u)_x+d_{1}u\,d_3v_x]\dx\dt\right) \\ \label{eq:iden_zeta_v}
&=: \sum_{i=1}^{9} K_i. 
\end{align}

Let us estimate each $K_i$, $1\leq i\leq 9$. For $i=1$, we can use \eqref{eq:est_deriv_weight} (which is also valid for $\theta$ and $\phi_m$) and Cauchy-Schwarz and Young inequalities to obtain
\begin{equation}\label{eq:est_K1}
|K_1|\leq \epsilon \esp\left(\int_{Q}\eta\lambda^3\phi_m^3\theta^2|v_x|^2\dx\dt\right) + C_\epsilon \esp\left(\int_{Q}\eta\lambda^5\phi_m^{5+\frac{2}{m}}\theta^2|u|^2\dx\dt\right)
\end{equation}
for any $\epsilon>0$.

Integrating by parts in the space variable, we have
\begin{align}\notag
K_2&=-\esp\left(\int_{Q}\gamma \eta \lambda^3\phi_m^3\theta^2v_{xx}u_{xxx}\dx\dt\right)-\esp\left(\int_{Q}\gamma \lambda^3\phi_m^3\theta^2v_x\eta_xu_{xxx}\dx\dt\right) \\ \label{eq:est_inter_K2}
&\quad -\esp\left(\int_{Q}\gamma \lambda^3\eta v_x(\phi_m^3\theta^2)_{x}u_{xxx}\dx\dt\right).
\end{align}
Noting that
\begin{equation}\label{eq:deriv_x_weight}
|\partial_x(\theta^2\phi_m^q)|\leq C\lambda\phi_m\theta^2\phi_m^q, \quad \forall q\in \mathbb Z+,
\end{equation}
it is not difficult to see that
\begin{align}\notag
|K_2| &\leq \delta  \esp\left(\int_{Q}\theta^2\lambda\phi_m |v_{xx}|^2\dx\dt\right)+2\epsilon \esp\left(\int_{Q}\theta^2\lambda^3\phi_m^3|v_x|^2\dx\dt\right) \\ \label{eq:est_K2}
&\quad +C_{\epsilon,\delta}\left(\esp\int_{Q_{\dom_3}}\theta^2\lambda^5\phi_m^5|u_{xxx}|^2\dx\dt\right)
\end{align}
for any $\delta,\epsilon>0$. In this part, we have used that $\textnormal{supp } \eta_x\subset \dom_3$ for estimating the second term of \eqref{eq:est_inter_K2}.

For the third term, we have from Cauchy-Schwarz and Young inequalities
\begin{equation}\label{eq:est_K3}
|K_3|\leq \epsilon \esp\left( \int_{Q}\eta \lambda^3\phi_m^3\theta^2|v_x|^2\dx\dt\right)+C_{\epsilon}\esp\left(\int_{Q}\lambda^3\phi_m^3\theta^2\eta|u_{xxx}|^2\dx\dt\right).
\end{equation}
In the same fashion, we easily have
\begin{equation}\label{eq:est_K4}
|K_4|\leq \epsilon \esp\left( \int_{Q}\eta\lambda^3\phi_m^3\theta^2|v_x|^2\dx\dt\right)+C_{\epsilon}\esp\left(\int_{Q}\eta\lambda^3\phi_m^3\theta^2|u_{xx}|^2\dx\dt\right).
\end{equation}

For the term $K_5$, we integrate by parts in the space variable to get
\begin{align*}
K_5&=\esp\left(\int_{Q}\Gamma \lambda^3\phi_m^3\theta^2\eta u_{x}v_{xx}\dx\dt\right)+\esp\left(\int_{Q}\Gamma \lambda^3\phi_m^3\theta^2\eta_xu v_{xx}\dx\dt\right) \\
&\quad + \esp\left(\int_{Q}\Gamma \lambda^3\eta(\phi_m^3\theta^2)_xu v_{xx}\dx\dt\right).
\end{align*}
Using \eqref{eq:deriv_x_weight} and the properties of the function $\eta$, we get after succesive application of Cauchy-Schwarz and Young inequalities
\begin{align}\notag
|K_5|&\leq 3\esp\left(\delta \int_{Q}\lambda\phi_m\theta^2|v_{xx}|^2\dx\dt\right)+C_{\delta}\esp\left(\int_{Q_{\dom_3}}\lambda^7\phi_m^7\theta^2|u|^2\dx\dt\right) \\ \label{eq:est_K5}
&\quad + C_{\delta}\esp \left(\int_{Q_{\dom_3}}\lambda^5\phi_m^5\theta^2|u_x|^2\dx\dt\right)
\end{align}
for any $\delta>0$. 

For the sixth and seventh terms, we have
\begin{equation}\label{eq:est_K6}
|K_6|\leq \frac{1}{2}\esp\left(\int_{Q}\eta\lambda^3\phi_m^3\theta^2|u|^2\dx\dt\right)+\frac{1}{2}\esp\left(\int_{Q}\eta\lambda^3\phi_m^3\theta^2|u_{xx}|^2\dx\dt\right),
\end{equation}
and
\begin{equation}\label{eq:est_K7}
|K_7|\leq \delta \esp\left(\int_{Q}\eta\lambda\phi_m\theta^2|v_{xx}|^2\dx\dt\right)+C_{\delta}\esp\left(\int_{Q}\eta\lambda^5\phi_m^5\theta^2|u|^2\dx\dt\right).
\end{equation}

The last two terms can be estimated as 
\begin{equation}\label{eq:est_K8K9}
|K_8|+|K_9|\leq \epsilon\esp\left(\int_{Q}\eta\lambda^3\phi_m^3\theta^2|v_x|^2\dx\dt\right)+C\esp\left(\int_{Q}\eta\lambda^3\phi_m^3\theta^2(|u|^2+|u_x|^2)\dx\dt\right),
\end{equation}
where the constant $C$ depends on $\|d_i\|_{L^\infty_{\mathcal F}(0,T;W^{2,\infty}(\dom))}$, $i=1,2$ and $\|d_3\|_{L^\infty_{\mathcal F}(0,T;\mathbb R)}$.

Summarizing, we collect estimates \eqref{eq:est_K1} and \eqref{eq:est_K2}--\eqref{eq:est_K8K9}, use the properties for $\eta$ and employ them on identity \eqref{eq:iden_zeta_v} to obtain
\begin{align*}\notag
\esp\left(\int_{Q_{\dom_4}}\lambda^3\phi_m^3\theta^2|v_x|^2\dx\dt\right) &\leq 5\delta\esp\left(\int_{Q}\lambda\phi_m\theta^2|v_{xx}|^2\dx\dt\right)+6\epsilon\esp\left(\int_{Q}\lambda^3\phi_m^3\theta^2|v_{x}|^2\dx\dt\right) \\ \notag
&\quad +C_{\delta,\epsilon}\esp\left(\int_{Q_{\dom_3}}\theta^2\lambda^7\phi_m^7|u|^2\dx\dt+\int_{Q_{\dom_3}}\theta^2\lambda^5\phi_m^5|u_x|^2\dx\dt\right) \\
&\quad +C_{\delta,\epsilon}\esp\left(\int_{Q_{\dom_3}}\theta^2\lambda^3\phi_m^3|u_{xx}|^2\dx\dt+\int_{Q_{\dom_3}}\theta^2\lambda^5\phi_m^5|u_{xxx}|^2\dx\dt\right).
\end{align*}
Then, using the above inequality in \eqref{eq:car_locales_u_v} and taking $\epsilon$ and $\delta$ small enough, we get
\begin{equation}\label{eq:est_car_locals_u}
\begin{split}
I_H(v_x)+I_{KS}(u)&\leq C\esp\left(\int_{Q_{\dom_3}}\theta^2\lambda^7\phi_m^7|u|^2\dx\dt+\int_{Q_{\dom_3}}\theta^2\lambda^5\phi_m^5|u_x|^2\dx\dt\right) \\
&\quad +C\esp\left(\int_{Q_{\dom_3}}\theta^2\lambda^3\phi_m^3|u_{xx}|^2\dx\dt+\int_{Q_{\dom_3}}\theta^2\lambda^5\phi_m^5|u_{xxx}|^2\dx\dt\right).
\end{split}
\end{equation}
\subsubsection*{Step 4: Local energy estimates for $u$ and their derivatives}

In the previous step, we have estimated $v_x$  in terms of local integrals of $u$ and its derivatives, so the variable $v$ does not longer appear on the right-hand side. 

At this point, estimate \eqref{eq:est_car_locals_u} looks quite similar to its deterministic counterpart (cf. \cite[Proof of Theorem 3.1]{cmp2}). However, unlike that case, we cannot estimate the local term of $u_{xxx}$ just by integrating by parts since we do not have a global term of $u_{xxxx}$ on the left-hand side of \eqref{eq:est_car_locals_u}. 

Instead, we have the following result.

\begin{lem}\label{lem:local_u3x}
Consider an open set $\dom_{2}$ such that $\dom_3\subset\subset\dom_{2}\subset\subset\dom_0$. Then, there exists $C>0$ such that
\begin{align}\notag 
\esp&\left(\int_{Q_{\dom_3}}\lambda^5\phi_m^5\theta^2|u_{xxx}|^2\dx\dt\right)\\ \notag
&\leq 4\epsilon\esp\left(\int_{Q}\lambda\phi_m\theta^2|u_{xxx}|^2\dx\dt\right)+2\delta\esp\left(\int_{Q}\lambda\phi_m\theta^2|v_{xx}|^2\dx\dt\right) \\ \notag
&+\rho\esp\left(\int_{Q}\lambda^3\phi_m^3\theta^2|v_{x}|^2\dx\dt\right)+C\esp\left(\int_{Q_{\dom_{2}}}\lambda^{13}\phi_m^{13}\theta^2|u_x|^2\dx\dt\right) \\ \label{eq:lemma_uxxx}
&+C\esp\left(\int_{Q_{\dom_{2}}}\lambda^{15}\phi_m^{15}\theta^2|u|^2\dx\dt\right) +C\esp\left(\int_{Q_{\dom_{2}}}\lambda^{7}\phi_m^{7}\theta^2|u_{xx}|^2\dx\dt\right)
\end{align}
for any $\epsilon,\delta,\rho>0$. 
\end{lem}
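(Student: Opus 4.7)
The plan is to produce the $u_{xxx}^2$ term by applying It\^o's formula to $\int_\dom \xi u_x^2\,\dx$, where $\xi := \lambda^5 \phi_m^5 \theta^2 \eta^2$ with $\eta \in C_0^\infty(\dom_2)$ satisfying $\eta\equiv 1$ on $\dom_3$; note that $\int_{Q_{\dom_3}} \lambda^5 \phi_m^5 \theta^2 u_{xxx}^2 \leq \int_Q \xi u_{xxx}^2$, so it suffices to control the right-hand side. Differentiating the first equation of \eqref{eq:adjoint_KSS} in $x$ gives
\[
\d u_x = \bigl(-\gamma u_{xxxxx} - u_{xxxx} - u_{xxx} + v_{xx}\bigr)\dt + (d_1 u)_x\,\d W(t).
\]
Applying It\^o's formula to $\xi u_x^2$, integrating over $\dom \times [0,T]$ and taking expectation, the fact that $\theta^2$ vanishes at $t=0$ and $t=T$ forces the total time variation to be zero, yielding the identity
\begin{align*}
0 &= \esp\!\int_Q \xi_t u_x^2 \,\dx\dt + \esp\!\int_Q \xi\bigl((d_1 u)_x\bigr)^2\dx\dt \\
&\quad + 2\esp\!\int_Q \xi u_x\bigl(-\gamma u_{xxxxx} - u_{xxxx} - u_{xxx} + v_{xx}\bigr)\dx\dt.
\end{align*}

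The next step is to integrate by parts in $x$ on each deterministic drift term, using that $\eta$ has compact support in $\dom_2$ so that spatial boundary terms vanish. The crucial computation, based on three successive integrations by parts, is
\[
-2\gamma\!\int_\dom \xi u_x u_{xxxxx}\,\dx = -2\gamma\!\int_\dom \xi u_{xxx}^2\,\dx - 2\gamma\!\int_\dom \xi_{xx} u_x u_{xxx}\,\dx - 4\gamma\!\int_\dom \xi_x u_{xx} u_{xxx}\,\dx,
\]
which isolates the desired $\int \xi u_{xxx}^2$ on the left after rearrangement. The remaining drift terms $-2\!\int\!\xi u_x u_{xxxx}$, $-2\!\int\!\xi u_x u_{xxx}$ and $2\!\int\!\xi u_x v_{xx}$ reduce, after one or two integrations by parts, to combinations of $\xi, \xi_x, \xi_{xx}$ multiplying products like $u_{xx} u_{xxx}$, $u_{xx}^2$, $u_{xx} v_x$, $\xi_x u_x v_x$ and so on. I would then bound each of the four products containing a $u_{xxx}$ factor by Young's inequality with weight $\lambda\phi_m\theta^2$, each contributing one copy of $\epsilon\!\int_Q \lambda\phi_m\theta^2 u_{xxx}^2$ to the right-hand side and summing to the stated factor $4\epsilon$. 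Using $|\xi_x|\leq C\lambda^6\phi_m^6\theta^2$ and $|\xi_{xx}|\leq C\lambda^7\phi_m^7\theta^2$ on $\dom_2$, the resulting remainders take the form $C\lambda^{13}\phi_m^{13}\theta^2 u_x^2$ and $C\lambda^7\phi_m^7\theta^2 u_{xx}^2$; the coupling term $-2\!\int\!\xi u_{xx} v_x$ (arising after one integration by parts of $2\!\int\!\xi u_x v_{xx}$) is split as $\rho\!\int\!\lambda^3\phi_m^3\theta^2 v_x^2 + C/\rho\!\int\!\xi^2/(\lambda^3\phi_m^3\theta^2) u_{xx}^2$, the latter factor being bounded by $C\lambda^7\phi_m^7\theta^2$ on $\dom_2$, while an analogous Young step retaining the $v_{xx}$ piece yields $\delta\!\int\!\lambda\phi_m\theta^2 v_{xx}^2$. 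Finally, the It\^o correction is controlled via $\xi((d_1 u)_x)^2\leq C\xi (u^2+u_x^2)$, using $d_1\in L^\infty_\fil(0,T;W^{2,\infty}(\dom))$, and is absorbed together with the $\xi_t u_x^2$ term into the local contributions $C\lambda^{15}\phi_m^{15}\theta^2 u^2$ and $C\lambda^{13}\phi_m^{13}\theta^2 u_x^2$.

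The main obstacle will be the delicate bookkeeping of the successive integrations by parts on $\int \xi u_x u_{xxxxx}$ combined with the simultaneous calibration of the four Young inequalities so that their sum produces the precise factor $4\epsilon$ in front of the global $u_{xxx}^2$ term. A secondary subtlety, which justifies taking the cutoff squared as $\eta^2$ rather than simply $\eta$, is to keep the ratios $|\xi_x|^2/\xi$ and $|\xi_{xx}|^2/\xi$ uniformly bounded on all of $\dom_2$ (not only on $\dom_3$ where $\eta\equiv 1$) by $C\lambda^7\phi_m^7\theta^2$ and $C\lambda^9\phi_m^9\theta^2$ respectively, so that the Young bounds do not degenerate near $\partial\dom_3$.
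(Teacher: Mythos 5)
Your overall scheme (It\^{o}'s formula on a localized quadratic quantity, spatial integrations by parts, then Young's inequality) is the same type of argument as the paper's, but your choice of functional creates a concrete exponent problem. The paper computes $\d(\hat{\zeta}\,u\,u_{xx})$ with $\hat{\zeta}=\hat{\eta}\lambda^5\phi_m^5\theta^2$, so in its identity \eqref{eq:est_uxxx} the factor $u_{xxx}$ only ever multiplies $u$ or $u_x$ (times $\hat{\zeta}_x,\hat{\zeta}_{xx},\hat{\zeta}_{xxx}$), and Young's inequality leaves remainders weighted by $\lambda^{13}\phi_m^{13}$ on $|u_x|^2$ and $\lambda^{15}\phi_m^{15}$ on $|u|^2$, while every local $|u_{xx}|^2$ contribution carries a weight bounded by $\lambda^{7}\phi_m^{7}$. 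In your expansion with $\xi=\lambda^5\phi_m^5\theta^2\eta^2$, two of the four $u_{xxx}$-products pair $u_{xxx}$ with $u_{xx}$, namely $\xi_x u_{xx}u_{xxx}$ (from the $u_{xxxxx}$ term) and $\xi u_{xx}u_{xxx}$ (from the $u_{xxxx}$ term). Treating these by Young against $\epsilon\lambda\phi_m\theta^2|u_{xxx}|^2$, as you propose, produces remainders $C_\epsilon|\xi_x|^2(\lambda\phi_m\theta^2)^{-1}|u_{xx}|^2\le C\lambda^{11}\phi_m^{11}\theta^2|u_{xx}|^2$ and $C_\epsilon\,\xi^2(\lambda\phi_m\theta^2)^{-1}|u_{xx}|^2\le C\lambda^{9}\phi_m^{9}\theta^2|u_{xx}|^2$ on $\dom_2$, not $C\lambda^{7}\phi_m^{7}\theta^2|u_{xx}|^2$ as you claim; there is no choice of the Young parameter that improves this, since it would require $|\xi_x|\lesssim\lambda^4\phi_m^4\theta^2$, which is false. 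So the steps you describe prove a strictly weaker inequality than \eqref{eq:lemma_uxxx} (wrong power on the local $|u_{xx}|^2$ term).

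The gap is fixable inside your setup: since $u_{xx}u_{xxx}=\tfrac12\partial_x\left(|u_{xx}|^2\right)$, integrate those two terms by parts once more instead of applying Young; they become $-\tfrac12\int\xi_{xx}|u_{xx}|^2$ and $-\tfrac12\int\xi_x|u_{xx}|^2$, which are purely local and bounded by $C\lambda^{7}\phi_m^{7}\theta^2|u_{xx}|^2$ on $Q_{\dom_2}$ because $|\xi_x|\le C\lambda^6\phi_m^6\theta^2$ and $|\xi_{xx}|\le C\lambda^7\phi_m^7\theta^2$ there (for $\lambda\phi_m\ge1$). With that correction only the two products pairing $u_{xxx}$ with $u_x$ remain, and the rest of your bookkeeping (the splitting of the $v$-coupling giving the $\delta$ and $\rho$ terms, the $\xi_t$ term, and the It\^{o} correction $\xi|(d_1u)_x|^2$) goes through and yields exactly \eqref{eq:lemma_uxxx}, even with $2\epsilon$ in place of $4\epsilon$, which is harmless since the precise constants $4\epsilon,2\delta,\rho$ are immaterial. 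Two minor remarks: as in the paper, the computation should be carried out for smooth data (the reduction made at the start of the proof of Theorem \ref{thm:car_estimate_KSS}), since your It\^{o} step for $u_x$ involves $u_{xxxxx}$; and your justification $\theta(\cdot,0)=\theta(\cdot,T)=0$ for dropping the endpoint terms is correct, since $\alpha_m<0$ blows up to $-\infty$ as $t\to0^+,T^-$.
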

The idea of the proof is to obtain the differential of a suitable product and argue as in the previous step. To avoid too much word repetition, we present a brief proof on \Cref{proof_lemma_u3x}.

Using \eqref{eq:lemma_uxxx} in \eqref{eq:est_car_locals_u} and taking $\epsilon, \delta$ and $\rho$ small enough we get 
\begin{equation}\label{eq:car_locals_um3x}
\begin{split}
I_H(v_x)+I_{KS}(u)&\leq C\esp\left(\int_{Q_{\dom_{2}}}\theta^2\lambda^{15}\phi_m^{15}|u|^2\dx\dt+\int_{Q_{\dom_{2}}}\theta^2\lambda^{13}\phi_m^{13}|u_x|^2\dx\dt\right) \\
&\quad +C\esp\left(\int_{Q_{\dom_2{}}}\theta^2\lambda^7\phi_m^7|u_{xx}|^2\dx\dt\right).
\end{split}
\end{equation}

Now, taking $\dom_{1}$ with $\dom_{2}\subset\subset\dom_{1}\subset\subset\dom_0$ and constructing a cut-off function $\eta_2\in C_0^\infty(\dom_{1})$ such that $\eta_2\equiv 1$ in $\dom_{2}$, we estimate
\begin{align}\notag 
\esp\left(\int_{Q_{\dom_2}}\theta^2\lambda^7\phi_m^7|u_{xx}|^2\dx\dt\right) &\leq \esp\left(\int_{Q}\eta_2\theta^2\lambda^7\phi_m^7|u_{xx}|^2\dx\dt\right) \\ \notag
&=-\esp\left(\int_{Q}\eta_2\theta^2\lambda^7\phi_m^7u_{xxx}u_x\dx\dt\right)\\ \notag 
&\quad +\frac{1}{2}\esp\left(\int_{Q}(\eta_2\lambda^7\phi_m^7\theta^2)_{xx} |u_x|^2 \dx\dt \right) \\ \notag
&\leq \epsilon \esp\left(\int_{Q}\theta^2\lambda\phi_m|u_{xxx}|^2\dx\dt\right)\\ \label{eq:est_u2x_local}
&\quad +C_\epsilon \esp \left(\iint_{Q_{\dom_{1}}}\theta^2\lambda^{13}\phi_m^{13}|u_x|^2\dx\dt\right).
\end{align}
Moreover, taking $\eta_3\in C_0^\infty(\dom_0)$ such that $\eta_3\equiv 1$ in $\dom_{1}$, we can argue in the same way to obtain
\begin{align}\notag 
\esp\left(\int_{Q_{\dom_{1}}}\theta^2\lambda^{13}\phi_m^{13}|u_x|^2\dx\dt\right) &\leq \esp\left(\int_{Q}\eta_3\lambda^{13}\phi_m^{13}|u_x|^2\dx\dt\right) \\ \notag
&\leq \epsilon \esp\left(\int_{Q}\theta^2\lambda^3\phi_m^3|u_{xx}|^2\dx\dt\right)\\ \label{eq:est_local_final}
&\quad +C_\epsilon \esp\left(\int_{Q_{\dom_0}}\theta^2\lambda^{23}\phi_m^{23}|u|^2\dx\dt\right).
\end{align}
Putting together \eqref{eq:car_locals_um3x}, \eqref{eq:est_u2x_local} and \eqref{eq:est_local_final} and taking $\epsilon>0$ sufficiently small, we obtain the desired result. This ends the proof. 
\end{proof}

\subsection{The observability inequality}
Once we have obtained the Carleman estimate \eqref{eq:car_final}, the observability inequality \eqref{eq:obs_forward} follows immediately.

\begin{proof}[Proof of \Cref{prop:observability_forward}]
The proof is classical and follows well-known arguments (see, e.g., \cite{FCG06} in the deterministic setting). For completeness, we sketch it briefly. 

Using the properties of the weight functions $\theta$ and $\phi_m$, it is not difficult to see that 
\begin{align*}
(\lambda\phi_m)^{23}\theta^2\leq C, \quad &\forall (x,t)\in Q, \\
(\lambda\phi_m)^j \theta^2 \geq C_j, \quad &\forall (x,t)\in \dom\times\left(\tfrac{T}{4},\tfrac{3T}{4}\right), \;\; j=3,7,
\end{align*}
where the constant $C>0$ only depends on $\dom$, $\dom_0$, $m$ and $T$. Therefore, we get from \eqref{eq:car_final}
\begin{equation}\label{eq:clean_car}
\esp\left(\intdoble{\frac{T}{4}}{\frac{3T}{4}}{\dom}\left(|v_x|^2+|u|^2\right)\dx\dt\right)\leq C\esp\left(\iint_{Q_{\dom_0}}|u|^2\dx\dt\right).
\end{equation}
Using It\^{o}'s formula, we compute $\d v^2=2v\d v+(\d v)^2$ and using the equation verified by $v$, we deduce
\begin{align*}
\esp\left(\int_{\dom}|v(t_2)|^2\dx\right)-\esp\left(\int_{\dom}|v(t_1)|^2\dx\right)=&\ 2\esp\left(\intdoble{t_1}{t_2}{\dom}\left(\Gamma v_{xx}+u_x+v_x\right)v\dx\dt\right) \\
&+\esp\left(\intdoble{t_1}{t_2}{\dom}|d_2 u+d_3v|^2\dx\dt\right)
\end{align*}
for all $0\leq t_1<t_2\leq T$. Integrating by parts in the space variable, we get
\begin{align}\notag 
\esp&\left(\int_{\dom}|v(t_2)|^2\dx\right)+2\Gamma\esp\left(\intdoble{t_1}{t_2}{\dom}|v_x|^2\dx\dt\right) \\ \notag
&\quad =\esp\left(\int_{\dom}|v(t_1)|^2\dx\right)-2\esp\left(\intdoble{t_1}{t_2}{\dom}\left (u -v \right)v_x\dx\dt\right) \\ \label{eq:ener_v}
&\quad\quad  +\esp\left(\intdoble{t_1}{t_2}{\dom}|d_2 u+d_3v|^2\dx\dt\right).
\end{align}
Arguing in the same way for the variable $u$, we may obtain
\begin{align}\notag
\esp&\left(\int_{\dom}|u(t_2)|^2\dx\right)+2\gamma\esp\left(\intdoble{t_1}{t_2}{\dom}|u_{xx}|^2\dx\dt\right) \\ \notag 
&\quad = \esp\left(\int_{\dom}|u(t_1)|^2\dx\right)+2\esp\left(\intdoble{t_1}{t_2}{\dom}u_{x}u_{xx}\dx\dt \right)+2\esp\left(\intdoble{t_1}{t_2}{\dom}|u_x|^2\dx\dt\right) \\ \label{eq:ener_u}
&\quad \quad  + 2\esp\left(\intdoble{t_1}{t_2}{\dom}uv_{x}\dx\dt \right)+\esp\left(\intdoble{t_1}{t_2}{\dom}|d_1 u|^2\dx\dt\right).
\end{align}
Combining estimates \eqref{eq:ener_v}--\eqref{eq:ener_u} and using Cauchy-Schwarz and Young inequalities we get
\begin{align}\notag 
&\esp\left(\int_{\dom}\left(|u(t_2)|^2+|v(t_2)|^2\right)\dx\right)+\frac{\gamma}{2}\esp\left(\intdoble{t_1}{t_2}{\dom}|u_{xx}|^2\dx\dt\right)+\frac{\Gamma}{2}\esp\left(\intdoble{t_1}{t_2}{\dom}|v_x|^2\dx\dt\right) \\ \label{eq:est_energy}
&\leq C\esp\left(\intdoble{t_1}{t_2}{\dom}\left(|u|^2+|v|^2\right)\dx\dt\right)+C\esp\left(\int_{\dom}\left(|u(t_1)|^2+|v(t_1)|^2\right)\dx\right),
\end{align}
where $C>0$ depends on $\gamma$, $\Gamma$ and the norms of $d_i$, $i=1,2,3$. Here, we also used the inequality 
\begin{equation*}
\int_{\dom}|u_x|^2\dx\leq \epsilon \int_{\dom}|u_{xx}|^2\dx+C_\epsilon \int_{\dom} |u|^2\dx, \quad\text{for all }\epsilon>0.
\end{equation*}
Using Gronwall inequality and then integrating from $(\frac{T}{4},\frac{3T}{4})$, we get from \eqref{eq:est_energy}
\begin{equation}\label{eq:est_ener_final}
\frac{T}{2}\esp \left(\int_{\dom}\left(|v(T)|^2+|u(T)|^2\right)\dx\right)\leq C\esp\left(\intdoble{\frac{T}{4}}{\frac{3T}{4}}{\dom}\left(|v|^2+|u|^2\right)\dx\dt\right).
\end{equation}
The result follows by combining \eqref{eq:clean_car} with \eqref{eq:est_ener_final} and employing Poincar\'e inequality. 
\end{proof}

\subsection{Null controllability result}\label{sec:control_proof}
The proof is standard and follows well-known arguments, see, for instance, \cite{TZ09, LL12}.
\begin{proof}[Proof of Theorem \ref{main_teo}]
We introduce the linear subspace of $L^2_{\mathcal F}(0,T;L^2(\dom_0))$
\begin{equation*}
\mathcal X=\left\{u|_{Q_{\dom_0} \times \Omega}\; | \; (u,v) \textnormal{ solve } \eqref{eq:adjoint_KSS} \textnormal{ with some } (u_0,v_0)\in L^2(\Omega,\mathcal F_0;L^2(\dom)^2)\right\}
\end{equation*}
and define the linear functional on $\mathcal X$ as 
\begin{equation*}
\mathcal L(u|_{Q_{\dom_0}\times \Omega}):=\esp\left(\int_{\dom} (u(T)y_T+v(T)z_T)\dx\right).
\end{equation*}
Note that $\mathcal L$ is a bounded linear functional on $\mathcal X$. Indeed, by means of Cauchy-Schwarz inequality and Proposition \ref{prop:observability_forward}, we have
\begin{equation*}
|\mathcal L(u|_{Q_{\dom_0}\times \Omega})| \leq \sqrt{C}\left(\esp \int_{Q_{\dom_0}}|u|^2\dx\dt\right)^{1/2}\left(\esp\int_{\dom}(|y_T|^2+|z_T|^2)\dx\right)^{1/2}.
\end{equation*}
where $C$ is the constant appearing in \eqref{eq:obs_forward}. Using Hahn-Banach theorem, $\mathcal L$ can be extended to a bounded linear function of $L^2_{\mathcal F}(0,T;L^2(\dom_0))$ and, for the sake of simplicity, we use the same notation for the extension. From Riesz representation theorem we can find a random field $h\in L^2_{\mathcal F}(0,T;L^2(\dom_0))$ such that 
\begin{equation}\label{eq:riesz}
\esp\left(\int_{\dom} \left(u(T)y_T+v(T)z_T\right)\dx\right)=\esp\left(\int_{Q_{\dom_0}}h u\dx\dt \right).
\end{equation}

We claim that this $h$ is exactly the control that drives the solution of $(y,z)$ to zero. Using It\^{o}'s formula, we compute both $\d(y u)$ and $\d(z v)$ and after integration by parts, we get
\begin{equation*}
\esp\left(\int_{\dom}y_T u(T)\dx\right)-\esp\left(\int_{\dom}y(0) u_0\dx\right)=\esp\left(\int_{Q} (z_x -d_2 Z + h\chi_{\dom_0})u\dx\dt\right)-\esp\left(\int_{Q}v y_x\dx\dt\right)
\end{equation*}
and 
\begin{equation*}
\esp\left(\int_{\dom}z_T v(T)\dx\right)-\esp\left(\int_{\dom}z(0) v_0\dx\right)=\esp\left(\int_{Q} y_xv\dx\dt\right)+\esp\left(\int_{Q}(d_2Z-z_x)u\dx\dt\right).
\end{equation*}

Adding the previous expressions, we obtain
\begin{equation}\label{eq:duality}
\esp\left(\int_{\dom}(y_T u(T)+z_Tv(T))\dx\right)-\esp\left(\int_{\dom}(y(0) u_0+z(0)v_0)\dx\right)=\esp\left(\int_{Q_{\dom_0}}h u\dx\dt\right)
\end{equation}
and comparing \eqref{eq:duality} and \eqref{eq:riesz} yields
\begin{equation*}
\esp\left(\int_{\dom}(y(0) u_0+z(0)v_0)\dx\right)=0.
\end{equation*}
Since $(u_0,v_0)$ can be chosen arbitrarily in $L^2(\Omega,\mathcal F_0;L^2(\dom)^2)$, we have $(y(0), z(0))=0$ in $\dom$, $P$-a.s. This ends the proof.
\end{proof}

\section{Null controllability of the forward system}\label{sec:forward_cont}

In this section, we prove \Cref{th:mainresult1,th:mainresult2}. The proofs are based on the methodology proposed in \cite{HSLBP20a}. For clarity, we have divided them in several steps, organized as follows

\begin{itemize}
\item Step 1: the control cost for the linear system. Here, we will prove a controllability result for system \eqref{forward_KS-H_nonlin} with $a\equiv 0$ with an explicit control cost. We emphasize that unlike \cite[Theorem 1.1]{Liu14b}, here we provide an explicit constant which is used in the following step.

\item Step 2: source term method. In this step, we use the classical methodology of \cite{LLT13}, recently adapted in the stochastic setting in \cite{HSLBP20a}, to prove a controllability result for \eqref{forward_KS-H_nonlin} (with $a\equiv 0$) with right-hand side terms vanishing as $t\to T$. At this point, the control cost obtained before is fundamental. 

\item Step 3: regular controlled trajectories. Using the result coming from the previous step, we employ some regularity estimates to provide a more regular controlled trajectory. This will allow us to analyze the nonlinear system \eqref{forward_KS-H_nonlin} and its controllability properties. 

\item Step 4: global and \emph{statistical} null controllability. Finally, in this point, using a fixed point argument we prove \Cref{th:mainresult1} for any initial data $(y_0,z_0)$ and a small enough truncation constant $R$. Then, imposing smallness conditions on the size of the initial datum, Markov's inequality allows us to achieve the result presented in \Cref{th:mainresult2}. 
\end{itemize}

\subsection{Step 1: the control cost for the linear system}

Let us fix $T>0$ and consider the following linear system

\begin{equation}\label{forward_KS-H_linear}
\begin{cases}
\d y+y_{xxxx}\,\dt=\chi_{\dom_0}h\,\dt+(b_1y+ b_2 z)\, \d W(t) &\text{in }Q, \\
\d z- z_{xx}\,\dt=y\,\dt+b_3z\, \d W(t) &\text{in }Q, \\
y=y_{xx}=0 &\text{on }\Sigma, \\
z=0 &\text{on }\Sigma, \\
y(0)=y_0, \quad z(0)=z_0 &\text{in } \dom. 
\end{cases}
\end{equation}
This system arises after linearizing \eqref{forward_KS-H_nonlin} around (0,0). According to \Cref{prop:app_well_2}, point a), for given $(y_0,z_0)\in L^2(\Omega,\mathcal F_0; L^2(\dom)^2)$ and $h\in L^2_{\fil}(0,T;L^2(\dom_0))$, \eqref{forward_KS-H_linear} has a unique solution $(y,z)\in L^2_{\mathcal F}(0,T;H^2(\dom)\times H_0^{1}(\dom)) \bigcap L^2\left(\Omega; C([0,T];L^2(\dom)^2)\right)$.

We present the following result.

\begin{prop}\label{prop:with_cost_control}
There exists a positive constant $C=C(\dom,\dom_0,b_1,b_2,b_3)$ such that for every $T>0$, $(y_0,z_0)\in L^2(\Omega,\mathcal F_0; L^2(\dom)^2)$, there exists $h\in \in L^2_{\fil}(0,T;L^2(\dom_0))$ such that the solution $(y,z)$ to \eqref{forward_KS-H_linear} satisfies
\[
y(T)=z(T)=0 \quad\text{in } \dom, \ a.s.
\]
Moreover, the control $h$ satisfies the following estimate 
\begin{equation*}
\esp\left(\int_{Q_{\dom_0}}|h|^2\dx\d{t}\right)\leq C_{T}\esp\left(\|y_0\|^2_{L^2(\dom)}+\|z_0\|^2_{L^2(\dom)}\right)
\end{equation*}
with 
\begin{equation*}
C_T=e^{C/T}.
\end{equation*}
\end{prop}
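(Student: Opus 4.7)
The plan is to reduce the null controllability of the linear system \eqref{forward_KS-H_linear} to an explicit observability inequality for its backward adjoint, and then extract the control via the same Hahn--Banach / Riesz duality argument used in \Cref{sec:control_proof}; the whole game is to track the $T$-dependence so as to obtain an observability constant of the form $e^{C/T}$. First I would write down the backward adjoint coupled system with unknowns $(p,q,P,Q)$, hinged boundary conditions on $p$ (dual to $y = y_{xx} = 0$) and Dirichlet on $q$, of schematic form $\d p + p_{xxxx}\dt = -(q + b_1 P)\dt + P\,\d W(t)$ and $\d q + q_{xx}\dt = -(b_2 P + b_3 Q)\dt + Q\,\d W(t)$, with terminal data $(p_T,q_T)\in L^2(\Omega,\fil_T; L^2(\dom)^2)$. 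The target observability inequality is
\[
\esp\bigl(\|p(0)\|_{L^2(\dom)}^2 + \|q(0)\|_{L^2(\dom)}^2\bigr) \leq e^{C/T}\,\esp\int_{Q_{\dom_0}} |p|^2\dx\dt.
\]

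I would derive this inequality by applying \Cref{lem:carleman_KS} to the $p$-equation and \Cref{lem:carleman_H} to the $q$-equation, both with the common weights $\theta = e^{\lambda \alpha_m}$ and $\phi_m$ from \eqref{eq:weights}. A key simplification compared with \Cref{thm:car_estimate_KSS} is that neither equation produces boundary traces: the hinged condition on $p$ and the Dirichlet condition on $q$ kill all boundary contributions, so the trace and interpolation machinery of Step~1 of that theorem is avoided. The coupling terms ($q$ appearing on the right-hand side of the $p$-equation and $b_2 P$ on the right-hand side of the $q$-equation) are absorbed into the Carleman left-hand sides $I_{KS}(p)$ and $I_H(q)$ by choosing $\lambda$ and $\mu$ sufficiently large; crucially, $\lambda$ and $\mu$ can be fixed once for all depending only on $\dom$, $\dom_0$, $b_1$, $b_2$, $b_3$, which is what guarantees the $T$-independence of the constant $C$.

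Once the weighted bound $I_{KS}(p) + I_H(q) \leq C\,\esp\int_{Q_{\dom_0}} \theta^2\lambda^k\phi_m^k |p|^2\dx\dt$ is established, I would extract the explicit control cost from the elementary bounds
\[
\max_{Q}\theta^2\lambda^k\phi_m^k \leq e^{C_2/T}, \qquad \min_{[T/4,3T/4]\times\dom}\theta^2\phi_m \geq e^{-C_1/T},
\]
which follow directly from the definitions in \eqref{eq:weights} with $\mu$, $\lambda$ frozen. This produces an estimate for $\esp\int_{T/4}^{3T/4}\!\int_\dom(|p|^2 + |q|^2)$ by $e^{C/T}\,\esp\int_{Q_{\dom_0}}|p|^2$. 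To propagate this middle-strip information backwards to $t = 0$, I would use It\^o's formula on $\|p(t)\|_{L^2}^2 + \|q(t)\|_{L^2}^2$ combined with Gronwall's inequality, in complete analogy with the derivation of \eqref{eq:ener_v}--\eqref{eq:est_energy}, and then integrate over $[T/4,3T/4]$. Combining the two estimates yields the target observability inequality, and the quantitative bound on $h$ follows from the operator norm of the extended linear functional, exactly as in \Cref{sec:control_proof}.

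The main obstacle is the careful bookkeeping of the $T$-dependence across every estimate: one must verify that the absorption constants in the Carleman step, the constants arising when dropping the weights on $[T/4,3T/4]$, and the constants from the It\^o--Gronwall energy estimate all aggregate into a single factor of the form $e^{C/T}$ with $C$ depending only on the announced parameters. Because the coupling is here through a zero-order term only (the $y$ in the $z$-equation), none of the intricate multi-step absorption of the first-order couplings of \Cref{sec:proof_main} is needed; the only technical point is to absorb the stochastic couplings $b_1 P$ and $(b_2 P + b_3 Q)$ cleanly into the Carleman right-hand sides without spoiling the desired weight powers.
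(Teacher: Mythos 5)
Your proposal takes a genuinely different route from the paper (the paper proves \Cref{prop:with_cost_control} by the Lebeau--Robbiano method: the spectral inequality of \Cref{lem:spec_4} for the hinged fourth-order operator, the low-frequency observability of \Cref{prop:obs_ineq_Xk}, the partial control of \Cref{prop:size_control}, the dissipation estimate of \Cref{prop:dissipation}, and an iterative gluing over intervals $\tau_j=T/2^j$ with frequencies $r_j=\beta^2 2^{4j}$, which yields $e^{C/T}$ directly). Unfortunately, as written your Carleman-plus-duality strategy has a genuine quantitative gap precisely at the point that matters: the claimed elementary bounds $\max_Q\theta^2\lambda^k\phi_m^k\leq e^{C_2/T}$ and $\min_{[T/4,3T/4]\times\dom}\theta^2\phi_m\geq e^{-C_1/T}$ with $\mu,\lambda$ frozen are false for the weights \eqref{eq:weights}. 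On the middle strip one has $|\alpha_m|\sim c/T^{2m}$, so with $\lambda$ fixed the lower bound for $\theta^2$ is of order $e^{-c\lambda/T^{2m}}$, and even with the optimal scaling of the Carleman parameter (of order $T^m+T^{2m}$, as in the classical case $m=1$) the best one can extract is an observability constant of the form $e^{C(1+1/T^{m})}$. Since the paper's weights require $m>3$, this route gives at best $e^{C/T^{m}}$ with $m>3$, not $e^{C/T}$. This is not cosmetic: the exact exponential rate $e^{C/T}$ is what feeds the choice $\gamma(t)=e^{M/t}$ and the identity \eqref{eq:relationweights} in the source-term method of \Cref{sec:stochasticsource}, so a weaker cost breaks Step 2 as it stands.

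There are two further obstructions you pass over too quickly. First, \Cref{lem:carleman_KS,lem:carleman_H} are stated for \emph{forward} equations (with the diffusion terms $F$, $G$ given data) and, in the fourth-order case, for \emph{clamped} boundary conditions $p=p_x=0$; your adjoint is a \emph{backward} system with hinged conditions $p=p_{xx}=0$ and with the correction processes $P,Q$ as unknowns, so neither lemma applies as cited and new Carleman estimates (controlling the weighted global terms in $P,Q$) would have to be proved. Second, the couplings cannot be handled merely "by choosing $\lambda$ and $\mu$ sufficiently large": the heat-Carleman applied to $q$ produces a \emph{local} observation term in $q$ over $\dom_0$, whereas the only admissible observation is $p$; converting that local $q$-term into local $p$-terms requires an It\^{o}-based local energy argument exploiting the drift coupling (the analogue of Step 3 in the proof of \Cref{thm:car_estimate_KSS}), which is exactly the "intricate" machinery you claim to avoid. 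If you want to keep a Carleman-based proof you would need $m=1$ weights, a backward/hinged version of the estimates, the local conversion step, and a careful $T$-scaling of the Carleman parameter; the paper's spectral-iteration argument sidesteps all of this and delivers the sharp $e^{C/T}$ cost.
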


The proof of this result is based on the well-known Lebeau-Robbiano method. For its development, we shall present first a series of auxiliary lemmas. 

We consider the linear operator $\mathcal A$ in $L^2(\dom)$
\begin{equation}\label{eq:def_op_A}
\mathcal Az:=-z_{xx}, \quad\forall z\in H^2(\dom)\cap H^1_0(\dom).
\end{equation}
Let $\{\lambda_i\}_{i=1}^{\infty}$ be the eigenvalues of $\mathcal A$ and $\{\varphi_i\}_{i=1}^{\infty}$ be the corresponding (normalized) eigenfunctions. The family $\{\varphi_i\}_{i=1}^{\infty}$ is an orthonormal basis of $L^2(\dom)$. Also, in this simple case, one can see that $\lambda_i=(i\pi)^2$ and $\varphi_i=\sqrt{2}\sin(i\pi x)$.

According to \cite{LRR19}, if $\mathcal A^2u:=u_{xxxx}$ on $\dom$ together with the boundary conditions $u=u_{xx}=0$ on $\{0,1\}$, the family $\{\varphi_i\}_{i=1}^{\infty}$ is actually composed of eigenfunctions of $\mathcal A^2$ associated to the eigenvalues $\mu_i=\lambda_i^2$. Moreover, this operator satisfies the following spectral inequality.
\begin{lem}\label{lem:spec_4}
Let $\dom_0$ be an open subset of $\dom$. There exists $C>0$ such that for any  $r>0$ it holds
\begin{equation*}
\|z\|_{L^2(\dom)}\leq Ce^{C r ^{1/4}}\|z\|_{L^2(\dom_0)}
\end{equation*}
for every $z\in\textnormal{span}\{\varphi_i\}_{\mu_i\leq r}$.
\end{lem}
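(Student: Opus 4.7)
The plan is to reduce this fourth-order spectral inequality to the classical Lebeau--Robbiano spectral inequality for the Dirichlet Laplacian on $\dom$, exploiting the very special fact that $\mathcal{A}^2$ (with the hinged boundary conditions $u=u_{xx}=0$ at $\{0,1\}$) and $\mathcal{A}$ (with homogeneous Dirichlet conditions) share the same orthonormal family of eigenfunctions $\{\varphi_i\}_{i\geq 1}$, with eigenvalues related by $\mu_i=\lambda_i^2$. This is exactly the observation quoted above from \cite{LRR19}, and in our one-dimensional setting it is moreover explicit: $\varphi_i(x)=\sqrt{2}\sin(i\pi x)$, $\lambda_i=(i\pi)^2$, $\mu_i=(i\pi)^4$.

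Given this identification, the first step is to observe that the linear space
\[
E_r := \textnormal{span}\{\varphi_i \, : \, \mu_i \leq r\}
\]
coincides with $\textnormal{span}\{\varphi_i \, : \, \lambda_i \leq \sqrt{r}\}$, simply because $\mu_i\leq r$ if and only if $\lambda_i\leq \sqrt{r}$. Consequently, every $z\in E_r$ is a finite linear combination of Dirichlet-Laplacian eigenfunctions whose eigenvalues do not exceed $\sqrt{r}$.

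Next, I would invoke the classical Lebeau--Robbiano spectral inequality for the Dirichlet Laplacian $\mathcal{A}$ on $\dom$ with observation on $\dom_0$: there exists $C_0=C_0(\dom,\dom_0)>0$ such that for every $\lambda>0$ and every $w\in \textnormal{span}\{\varphi_i:\lambda_i\leq \lambda\}$,
\[
\|w\|_{L^2(\dom)} \leq C_0\, e^{C_0 \sqrt{\lambda}}\, \|w\|_{L^2(\dom_0)}.
\]
(This is standard; in the 1D case on $(0,1)$ it can be obtained either by direct arguments on sums of sines, via a Turán-type lemma, or by applying a Carleman estimate for the elliptic operator $-\partial_t^2 -\partial_x^2$ on an augmented cylinder to the function $\sum a_i\varphi_i(x)\sinh(\sqrt{\lambda_i}\,t)/\sqrt{\lambda_i}$, as in Lebeau--Robbiano.) Applying this with $\lambda=\sqrt{r}$ to $z\in E_r$ gives
\[
\|z\|_{L^2(\dom)} \leq C_0\, e^{C_0\, r^{1/4}}\, \|z\|_{L^2(\dom_0)},
\]
which is exactly the desired inequality with $C:=\max\{C_0,C_0\}=C_0$.

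The only real content of the argument is the identification of the two eigenbases and the exponent shift $\sqrt{\lambda}\mapsto r^{1/4}$ coming from $\mu_i=\lambda_i^2$; the main (and only) analytic ingredient is the classical Lebeau--Robbiano spectral inequality for $-\partial_x^2$ with Dirichlet conditions, which I would treat as a black box. There is no serious obstacle here, and in particular no need to revisit Carleman estimates for the fourth-order operator: the hinged boundary conditions $u=u_{xx}=0$ are precisely what allow the fourth-order spectral problem to factor through the second-order one.
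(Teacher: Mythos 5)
Your proposal is correct and follows exactly the route the paper intends: the paper does not write out a proof but cites \cite{LRR19} and explains in the accompanying remark that the hinged boundary conditions make $\mathcal{A}^2$ share the eigenbasis $\{\varphi_i\}$ of the Dirichlet Laplacian with $\mu_i=\lambda_i^2$, so that $\textnormal{span}\{\varphi_i:\mu_i\leq r\}=\textnormal{span}\{\varphi_i:\lambda_i\leq\sqrt{r}\}$ and the classical Lebeau--Robbiano spectral inequality at frequency $\sqrt{r}$ yields the factor $e^{Cr^{1/4}}$. Your write-up simply makes this implicit reduction explicit, which is fine.
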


\begin{rmk} We are using here the boundary conditions $u=u_{xx}=0$ on $\{0,1\}$ to exploit the fact that both operators share eigenfunctions and that $\mu_i=\lambda_i^2$, which is not the case for the fourth-order operator with clamped boundary conditions, i.e., $u=u_{x}=0$ on $\{0,1\}$. We refer the reader to \cite{AE13,Gao_lebeau,LRR19} for variants of Lemma \ref{lem:spec_4} corresponding to the clamped boundary conditions.
\end{rmk}

Now, for any $\tau>0$, we set $Q_\tau=\dom \times (0,\tau)$, $\Sigma_\tau=\dom\times(0,\tau)$ and consider the following backward stochastic system
\begin{equation}\label{eq:system_back_tau}
\begin{cases}
\d u-u_{xxxx}\dt=(-v-b_1\ov{u})\dt+\ov{u}\,\d W(t) &\text{in } Q_\tau, \\
\d v+v_{xx}\dt=(-b_2\ov{u}-b_3\ov{v})\dt+\ov{v}\,\d W(t) &\text{in } Q_\tau, \\
u=u_{xx}=0 &\text{on }\Sigma_\tau, \\
v=0 &\text{on } \Sigma_\tau, \\
u(\tau)=u_\tau, \quad v(\tau)=v_\tau &\text{in } \dom.
\end{cases}
\end{equation}
For any terminal data $(u_{\tau},v_\tau)\in L^2(\Omega,\mathcal F_{\tau}; L^2(\dom)^2)$, we know thanks to \Cref{prop:back_uxx} that system \eqref{eq:system_back_tau} has a unique solution
\begin{equation*}
(u,v,\ov{u},\ov{v})\in \left[L^2_{\mathcal F}(0,\tau;H^2(\dom)\times H_0^{1}(\dom)) \bigcap L^2\left(\Omega; C([0,\tau];L^2(\dom)^2)\right)\right]\times L^2_{\mathcal F}(0,\tau;L^2(\dom)^2).
\end{equation*}

For each $r>0$, consider the space $\mathcal X_r=\textnormal{span}\{\varphi_i\}_{\lambda_i\leq \sqrt{r}}$ and denote by $\Pi_r$ the orthogonal projection from $L^2(\dom)$ to $\mathcal X_r$. {We present the following result, which is an observability inequality with only one observation for system \eqref{eq:system_back_tau} with final data in $\mathcal X_r$.}

\begin{lem}\label{prop:obs_ineq_Xk}
There exists a positive constant {$C=C(\dom,\dom_0,b_1,b_2,b_3)$ such that for any $\tau\in(0,1)$, any $\sqrt{r}\geq \lambda_1$}, and $(u_{\tau},v_{\tau})\in L^2(\Omega,\mathcal{F}_\tau;(\mathcal X_r)^2)$, the corresponding solution $(u,v, \overline u,\overline v)$ of \eqref{eq:system_back_tau} satisfies:

{
\begin{equation}\label{eq:obs_particular}
\esp \left(\|u(0)\|_{L^2(\dom)}^2\right)+\esp\left(\|v(0)\|^2_{L^2(\dom)}\right)\leq C\frac{e^{C{r}^{1/4}}}{\tau}\esp\left(\int_{0}^{\tau}\!\!\! \int_{\dom_0}|u|^2\dx\dt\right).
\end{equation}
}
\end{lem}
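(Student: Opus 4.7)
My plan is to follow the Lebeau--Robbiano spectral strategy, suitably extended to this coupled backward SPDE where only the first component is observed.

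The starting point is an invariance argument. Because the hinged boundary conditions $u=u_{xx}=0$ make $\{\varphi_i\}$ a common family of eigenfunctions of both $\mathcal{A}$ and $\mathcal{A}^2$, the finite-dimensional space $\mathcal{X}_r$ is invariant under these two operators. Combined with the spatial constancy of the coefficients $b_i(t)$, projecting \eqref{eq:system_back_tau} onto $\mathcal{X}_r$ produces a well-posed BSDE whose solution, by uniqueness, coincides with the original; hence $u(t,\omega)$, $v(t,\omega)$, $\overline u(t,\omega)$, $\overline v(t,\omega)\in\mathcal{X}_r$ a.s.\ for every $t\in[0,\tau]$. Applying \Cref{lem:spec_4} pointwise in $(t,\omega)$ then yields $\esp\int_0^\tau\|u(t)\|_{L^2(\dom)}^2\,\dt \le Ce^{2Cr^{1/4}}\esp\int_0^\tau\int_{\dom_0}|u|^2\,\dx\,\dt$.

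Next, I would perform a backward energy estimate. Applying It\^o's formula to $\|u(t)\|^2+\|v(t)\|^2$, the boundary conditions produce the positive dissipation terms $2\|u_{xx}\|^2$ and $2\|v_x\|^2$, while the cross terms $-2\langle u,v\rangle$ and $-2b_i\langle\cdot,\overline{(\cdot)}\rangle$ are handled via Young's inequality with weights calibrated so that the $\|\overline u\|^2$ and $\|\overline v\|^2$ contributions from the It\^o correction cancel (the key trick is a completion of squares of the form $\|\overline u-b_1 u-b_2 v\|^2\ge 0$). This yields a differential inequality $\frac{d}{dt}\esp(\|u\|^2+\|v\|^2) \ge -K\esp(\|u\|^2+\|v\|^2)$ with $K$ depending only on $\|b_i\|_\infty$; Gronwall then gives $\esp(\|u(0)\|^2+\|v(0)\|^2) \le e^{K\tau}\esp(\|u(t)\|^2+\|v(t)\|^2)$ for every $t\in[0,\tau]$. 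Integrating in $t$ produces $\tau\esp(\|u(0)\|^2+\|v(0)\|^2)\le e^{K\tau}\esp\int_0^\tau(\|u\|^2+\|v\|^2)\,\dt$, and since $\tau<1$ the factor $e^{K\tau}$ is absorbed in the overall constant.

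The main obstacle is to control $\esp\int_0^\tau\|v\|^2\,\dt$ by the observation of $u$ alone, since the spectral inequality applies only to $u$. My plan is to exploit the cascade coupling through the $u$-equation: applying It\^o's formula to the pairing $\langle u,v\rangle$ on $\mathcal{X}_r$, and using $du=(\mathcal{A}^2 u-v-b_1\overline u)\,\dt+\overline u\,\d W$ together with the operator-norm bounds $r$ and $\sqrt r$ of $\mathcal{A}^2$ and $\mathcal{A}$ on $\mathcal{X}_r$, one derives after Cauchy--Schwarz and Young manipulations an estimate of the shape
\[
\esp\int_0^\tau\|v\|^2\,\dt \le C_r\,\esp\int_0^\tau\|u\|^2\,\dt + \tfrac12\esp(\|u(0)\|^2+\|v(0)\|^2),
\]
with $C_r$ polynomial in $r$. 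Combining this with the integrated Gronwall bound, absorbing the $\tfrac12$ coefficient on the left-hand side, converting the $\|u\|^2$-integral into the observation via the spectral inequality, and absorbing the polynomial $C_r$ into the exponential $e^{Cr^{1/4}}$, one arrives at the announced inequality \eqref{eq:obs_particular} after dividing by $\tau$. The delicate point throughout is the careful calibration of the Young weights so that the self-bounding factor in the coupling estimate stays strictly less than one, ensuring absorption is legitimate.
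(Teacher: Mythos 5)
Your Steps 1 and 2 are sound: the invariance of $\mathcal X_r$ under the projected dynamics (commutation of $\Pi_r$ with $\mathcal A$, $\mathcal A^2$ and the space-independent $b_i$, plus uniqueness of the BSDE) does give $u(t),v(t),\ov u(t),\ov v(t)\in\mathcal X_r$ a.s., the spectral inequality of \Cref{lem:spec_4} then applies pointwise in $(t,\omega)$, and the backward energy/Gronwall argument yields $\tau\,\esp(\|u(0)\|^2+\|v(0)\|^2)\leq e^{K}\esp\int_0^\tau(\|u\|^2+\|v\|^2)\dt$. The genuine gap is in Step 3, which is precisely the heart of the lemma. Integrating the It\^{o} identity for $\langle u,v\rangle$ over $(0,\tau)$ gives
\begin{equation*}
\esp\int_0^\tau\|v\|^2\dt=\esp\langle u(0),v(0)\rangle-\esp\langle u(\tau),v(\tau)\rangle
+\esp\int_0^\tau\left[\langle\mathcal A^2u,v\rangle+\langle u,\mathcal Av\rangle-b_1\langle\ov u,v\rangle-b_2\langle u,\ov u\rangle-b_3\langle u,\ov v\rangle+\langle\ov u,\ov v\rangle\right]\dt,
\end{equation*}
and two families of terms on the right cannot be handled by the ``Cauchy--Schwarz and Young manipulations'' you invoke. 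First, the terminal term $-\esp\langle u(\tau),v(\tau)\rangle$: the terminal data $(u_\tau,v_\tau)$ is arbitrary in $\mathcal X_r\times\mathcal X_r$ (take $v_\tau=-Mu_\tau$ with $M$ huge), so this term can be arbitrarily large and positive and is controlled neither by the observation of $u$ nor by $\esp(\|u(0)\|^2+\|v(0)\|^2)$. Second, the correction processes: the terms $b_1\langle\ov u,v\rangle$, $b_2\langle u,\ov u\rangle$, $b_3\langle u,\ov v\rangle$ and especially $\langle\ov u,\ov v\rangle$ require an a priori bound on $\esp\int_0^\tau(\|\ov u\|^2+\|\ov v\|^2)\dt$, and the only available estimate (the standard backward energy identity, cf.\ \Cref{prop:back_uxx}) bounds this quantity by the terminal data --- exactly what an observability inequality must not depend on; moreover the dissipative It\^{o} corrections $\|\ov u\|^2+\|\ov v\|^2$ were already spent in your Step 2 to cancel the $b_i$ cross terms, so there is nothing left to absorb them. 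There is also a calibration problem even if the intermediate estimate were granted: the term $\tfrac12\esp(\|u(0)\|^2+\|v(0)\|^2)$, once multiplied by the Gronwall factor, cannot be absorbed into the left-hand side $\tau\,\esp(\|u(0)\|^2+\|v(0)\|^2)$ when $\tau$ is small.

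For comparison, the paper does not reprove this estimate at all: it observes that \eqref{eq:system_back_tau} with data in $\mathcal X_r$ falls within the scope of Proposition 3.1 of \cite{Liu14b} (with $\alpha=2$, $\beta=1$), whose proof needs only the replacement of the spectral inequality by \Cref{lem:spec_4}; the delicate recovery of the unobserved component $v$ and of the processes $\ov u,\ov v$ in the backward stochastic setting is exactly what that cited proof is designed to handle, and it requires more than a single cross-term identity (a weighted-in-time/iterative argument on the finite-dimensional projected system). If you want a self-contained proof, you must at least introduce a time weight vanishing at $t=\tau$ to eliminate the terminal cross term and devise a mechanism (not available from the plain energy identity) to estimate $\ov u,\ov v$ without invoking the terminal data; as written, the key estimate of Step 3 is asserted rather than proved.
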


{ This result is a particular case of \cite[Proposition 3.1]{Liu14b}. Indeed, in the way we are defining the operator $\mathcal A$ in \eqref{eq:def_op_A} it is enough to take $\alpha=2$ in the work \cite{Liu14b} to define the fourth-order operator. A careful inspection of the proof of \cite[Proposition 3.1]{Liu14b} and \Cref{lem:spec_4} allows us to conclude the result without major changes. 
}

With the observability inequality in \Cref{prop:obs_ineq_Xk}, one can prove the following controllability result.

\begin{lem}\label{prop:size_control}
For each $\sqrt{r}\geq \lambda_1$ and any $\tau\in(0,1)$, there exists $h_r\in L^2_{\mathcal{F}}(0,\tau;L^2(\dom_0))$ such that the corresponding controlled solution $(y,z)$ to \eqref{forward_KS-H_linear} satisfies
\begin{equation*}
\Pi_r(y(\tau))=\Pi_r(z(\tau))=0 \quad \text{in } \dom, \ P\textnormal{-a.s.}
\end{equation*}
Moreover, we can estimate the control cost and the size of the controlled solution as
\begin{align}\label{eq:est_control_case1}
\|h_r\|^2_{L^2_{\mathcal F}(0,\tau;L^2(\dom_0))}&\leq C\frac{e^{C{r}^{1/4}}}{\tau}\esp \left(\|y_0\|^2_{L^2(\dom)}+\|z_0\|^2_{L^2(\dom)}\right)
\end{align}
and
\begin{align}\label{eq:est_size_case1}
\esp\left(\|y(\tau)\|_{L^2(\Omega)}^2+\|z(\tau)\|^2_{L^2(\Omega)}\right) &\leq C\frac{e^{C{r}^{1/4}}}{\tau}\esp \left(\|y_0\|^2_{L^2(\dom)}+\|z_0\|^2_{L^2(\dom)}\right),
\end{align}
where $C=C(\dom,\dom_0,b_1,b_2,b_3)>0$ uniform with respect to $\tau$. 
\end{lem}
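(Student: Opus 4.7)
The plan is to derive the controllability-to-$\mathcal{X}_r$ result from the observability estimate \eqref{eq:obs_particular} by the usual duality (HUM) argument, adapted to the stochastic setting as in \cite{TZ09,LL12}. First, via It\^{o}'s formula applied to $\d(yu+zv)$ where $(y,z)$ solves \eqref{forward_KS-H_linear} with control $h\in L^2_{\fil}(0,\tau;L^2(\dom_0))$ and $(u,v,\ov{u},\ov{v})$ solves the backward system \eqref{eq:system_back_tau} with terminal data $(u_\tau,v_\tau)\in L^2(\Omega,\fil_\tau;(\mathcal X_r)^2)$, I would derive the duality identity
\begin{equation*}
\esp\left(\int_{\dom}\bigl(y(\tau)u_\tau+z(\tau)v_\tau\bigr)\dx\right)=\esp\left(\int_{\dom}\bigl(y_0\,u(0)+z_0\,v(0)\bigr)\dx\right)+\esp\left(\int_{0}^{\tau}\!\!\int_{\dom_0}h\,u\,\dx\dt\right).
\end{equation*}
Since $u_\tau,v_\tau\in\mathcal X_r$, the left-hand side equals $\esp\bigl(\int_{\dom}(\Pi_r y(\tau) u_\tau+\Pi_r z(\tau)v_\tau)\dx\bigr)$, so the requirement $\Pi_r y(\tau)=\Pi_r z(\tau)=0$ is equivalent to demanding that this identity hold for every $(u_\tau,v_\tau)\in L^2(\Omega,\fil_\tau;(\mathcal X_r)^2)$ with left-hand side replaced by zero.

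To produce such a control, I would minimize the quadratic functional
\begin{equation*}
J_r(u_\tau,v_\tau)=\tfrac12\esp\left(\int_{0}^{\tau}\!\!\int_{\dom_0}|u|^2\dx\dt\right)+\esp\left(\int_{\dom}\bigl(y_0\,u(0)+z_0\,v(0)\bigr)\dx\right)
\end{equation*}
over the Hilbert space $L^2(\Omega,\fil_\tau;(\mathcal X_r)^2)$, where $(u,v)$ is the solution of \eqref{eq:system_back_tau} driven by $(u_\tau,v_\tau)$. The key point is that \Cref{prop:obs_ineq_Xk} yields
\begin{equation*}
\esp\bigl(\|u(0)\|^2_{L^2(\dom)}+\|v(0)\|^2_{L^2(\dom)}\bigr)\leq C\frac{e^{Cr^{1/4}}}{\tau}\esp\left(\int_{0}^{\tau}\!\!\int_{\dom_0}|u|^2\dx\dt\right),
\end{equation*}
which, combined with Cauchy-Schwarz, makes $J_r$ continuous, strictly convex and coercive on the finite-dimensional-in-space (but $\omega$-dependent) target space. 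Hence $J_r$ admits a unique minimizer $(u_\tau^\star,v_\tau^\star)$ with associated adjoint state $(u^\star,v^\star)$, and setting $h_r:=u^\star\chi_{\dom_0}$ the Euler-Lagrange equation is exactly the duality identity above with vanishing left-hand side. This shows $\Pi_r(y(\tau))=\Pi_r(z(\tau))=0$ $P$-a.s.

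To obtain the control cost \eqref{eq:est_control_case1}, I would evaluate $J_r$ at the minimizer: since $J_r(u_\tau^\star,v_\tau^\star)\leq J_r(0,0)=0$, a Cauchy-Schwarz splitting together with the observability inequality gives
\begin{equation*}
\tfrac12\esp\left(\int_0^\tau\!\!\int_{\dom_0}|u^\star|^2\dx\dt\right)\leq \esp\bigl(\|y_0\|^2+\|z_0\|^2\bigr)^{1/2}\left(C\tfrac{e^{Cr^{1/4}}}{\tau}\right)^{1/2}\esp\left(\int_0^\tau\!\!\int_{\dom_0}|u^\star|^2\dx\dt\right)^{1/2},
\end{equation*}
from which \eqref{eq:est_control_case1} follows. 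For the size estimate \eqref{eq:est_size_case1}, I would apply It\^{o}'s formula to $\|y(t)\|_{L^2}^2+\|z(t)\|_{L^2}^2$ along the controlled trajectory, use the boundedness of $b_1,b_2,b_3$ and Poincar\'e's inequality to absorb the parabolic dissipation terms on the left, and close via Gronwall's lemma. This yields an energy bound of the form $\esp(\|y(\tau)\|^2+\|z(\tau)\|^2)\leq C(\esp(\|y_0\|^2+\|z_0\|^2)+\|h_r\|^2_{L^2_\fil(0,\tau;L^2(\dom_0))})$; inserting \eqref{eq:est_control_case1} produces \eqref{eq:est_size_case1}.

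The main obstacle I anticipate is the careful treatment of the adapted/It\^{o} calculus in the duality identity, in particular verifying that the extra processes $\ov{u},\ov{v}$ that appear in the backward system yield no boundary contribution after taking expectation (they vanish when integrated against $\d W$), and ensuring the constants are genuinely uniform in $\tau\in(0,1)$ and depend on the data only through $\dom,\dom_0,b_1,b_2,b_3$; the finiteness of $r$ keeps everything in a finite-dimensional subspace of $L^2(\dom)$, which sidesteps the usual compactness issues in the stochastic setting.
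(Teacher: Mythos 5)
Your overall route is the one the paper intends: the lemma is proved by duality from the observability inequality of \Cref{prop:obs_ineq_Xk}, following the scheme of the proof of \Cref{main_teo} in Section \ref{sec:control_proof}. Your duality identity is correct (note that the point is not only that the stochastic integrals vanish in expectation, but that the drift terms involving $\ov{u},\ov{v}$ cancel exactly against the It\^{o} correction terms $(b_1y+b_2z)\ov{u}\,\dt$ and $b_3z\ov{v}\,\dt$), the evaluation of the functional at the minimizer does give the cost bound \eqref{eq:est_control_case1} with the constant $Ce^{Cr^{1/4}}/\tau$, and the energy/Gronwall argument for \eqref{eq:est_size_case1} is fine and uniform in $\tau\in(0,1)$.

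There is, however, a gap in the variational step. You claim that $J_r$ is strictly convex and coercive on $L^2(\Omega,\fil_\tau;(\mathcal X_r)^2)$, invoking \eqref{eq:obs_particular}. But that inequality bounds $\esp\bigl(\|u(0)\|^2_{L^2(\dom)}+\|v(0)\|^2_{L^2(\dom)}\bigr)$, i.e.\ the \emph{initial} value of the adjoint state, not the terminal datum $(u_\tau,v_\tau)$ over which you minimize; it gives a lower bound for the observation term only against the linear term of $J_r$, not against the norm of $(u_\tau,v_\tau)$. Strict convexity would likewise require a unique continuation statement ($u\equiv 0$ on $\dom_0\times(0,\tau)$ a.s.\ implies $(u_\tau,v_\tau)=0$) that you have not established. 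The remark that the target space is ``finite-dimensional in space'' does not rescue coercivity, since $L^2(\Omega,\fil_\tau;(\mathcal X_r)^2)$ is infinite-dimensional in $\omega$. So the existence of the minimizer $(u_\tau^\star,v_\tau^\star)$ is not justified as written. Two standard repairs: either minimize the penalized functional $J_r^\epsilon=J_r+\tfrac{\epsilon}{2}\esp\|(u_\tau,v_\tau)\|^2_{L^2(\dom)^2}$, which is coercive for each $\epsilon>0$, use \eqref{eq:obs_particular} to get bounds on the controls and on $\Pi_r(y(\tau)),\Pi_r(z(\tau))$ uniform in $\epsilon$, and pass to the limit; or dispense with the minimization altogether and argue exactly as in the proof of \Cref{main_teo}: define the linear functional $\mathcal L(u|_{Q_{\dom_0}\times\Omega}):=\esp\bigl(\int_\dom(y_0u(0)+z_0v(0))\dx\bigr)$ on the observation space, bound it by Cauchy--Schwarz together with \eqref{eq:obs_particular}, extend by Hahn--Banach and represent by Riesz; this is precisely the ``minor adaptation'' the paper refers to, and it yields the same constant $Ce^{Cr^{1/4}}/\tau$ in \eqref{eq:est_control_case1}, after which your argument for \eqref{eq:est_size_case1} closes the proof.
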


The arguments needed to prove this lemma are similar to those presented in the proof of Theorem \ref{main_teo} and only minor adaptions are required, see Section \ref{sec:control_proof}.

We also need a dissipation result for the uncontrolled solution. We present the following. 
\begin{prop}\label{prop:dissipation}
Assume that $h\equiv 0$ in system \eqref{forward_KS-H_linear}. For any $(y_0,z_0)\in L^2_{\mathcal F}(\Omega,\mathcal F_0; L^2(\dom)^2)$ with $\Pi_{\lambda_k}(y_0)=\Pi_{\lambda_k}(z_0)=0$ in $\dom$, $P$-a.s., the corresponding solution $(y,z)$ satisfies
\begin{equation*}
\esp\left(\|y(t)\|^2_{L^2(\dom)}+\|z(t)\|^2_{L^2(\dom)}\right)\leq e^{-\gamma_{k+1} t}\esp\left(\|y_0\|^{2}_{L^2(\dom)}+\|z_0\|^{2}_{L^2(\dom)}\right), \quad \forall t\in[0,T],
\end{equation*}
where $\gamma_{k+1}=2\lambda_{k+1}-\sigma$ where $\sigma=1+4\left(\sum_{i=1}^{3}\|b_i\|^2_{L^\infty_{\mathcal F}(0,T;\R)}\right)$.
\end{prop}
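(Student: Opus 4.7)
The plan is to combine a spectral invariance argument with an Itô energy estimate. Since the operator $\mathcal A = -\partial_{xx}$ with homogeneous Dirichlet boundary conditions and the operator $\mathcal A^2 = \partial_{xxxx}$ with the hinged conditions $u = u_{xx} = 0$ share the orthonormal eigenbasis $\{\varphi_i\}_{i \geq 1}$ of $L^2(\dom)$, the uncontrolled system in \eqref{forward_KS-H_linear} decouples mode by mode: writing $y(t) = \sum_{i \geq 1} a_i(t) \varphi_i$ and $z(t) = \sum_{i \geq 1} c_i(t) \varphi_i$ with $a_i = (y,\varphi_i)_{L^2}$, $c_i = (z,\varphi_i)_{L^2}$, the coefficients satisfy the two-dimensional linear SDE
\begin{equation*}
da_i = -\lambda_i^2 a_i\, \dt + (b_1 a_i + b_2 c_i)\, \d W(t), \qquad dc_i = (-\lambda_i c_i + a_i)\, \dt + b_3 c_i\, \d W(t).
\end{equation*}
The hypothesis $\Pi_{\lambda_k}(y_0) = \Pi_{\lambda_k}(z_0) = 0$ P-a.s.\ means $a_i(0) = c_i(0) = 0$ for every index $i$ with $\lambda_i \leq \lambda_k$; by uniqueness of the SDE these coefficients remain zero for all $t$, so $(y(t), z(t)) \in \mathcal X_{\lambda_k}^\perp \times \mathcal X_{\lambda_k}^\perp$ P-a.s.

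Next, I would apply It\^o's formula to $\|y(t)\|_{L^2}^2$ and $\|z(t)\|_{L^2}^2$, integrate by parts using the boundary conditions (twice for the $y$-equation, giving $\langle y, y_{xxxx}\rangle_{L^2} = \|y_{xx}\|_{L^2}^2$; once for the $z$-equation, giving $\langle z, z_{xx}\rangle_{L^2} = -\|z_x\|_{L^2}^2$), take expectations so the martingale parts vanish, and sum the two identities to obtain
\begin{equation*}
\frac{d}{\dt} E(t) = -2\esp\|y_{xx}\|^2 - 2\esp\|z_x\|^2 + \esp\|b_1 y + b_2 z\|^2 + 2\esp\langle z, y\rangle + \esp(b_3^2 \|z\|^2),
\end{equation*}
where $E(t) := \esp(\|y(t)\|_{L^2}^2 + \|z(t)\|_{L^2}^2)$.

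The key step is then to exploit the spectral invariance: since $y(t), z(t) \in \mathcal X_{\lambda_k}^\perp$ a.s., Parseval gives the Poincar\'e-type lower bounds $\|y_{xx}\|_{L^2}^2 \geq \lambda_{k+1}^2 \|y\|_{L^2}^2$ and $\|z_x\|_{L^2}^2 \geq \lambda_{k+1}\|z\|_{L^2}^2$, and since $\lambda_1 = \pi^2 > 1$ we have $\lambda_{k+1}^2 \geq \lambda_{k+1}$. Combining these with the elementary bounds $\|b_1 y + b_2 z\|^2 \leq 2\|b_1\|_\infty^2 \|y\|^2 + 2\|b_2\|_\infty^2 \|z\|^2$, $2\langle z,y\rangle \leq \|y\|^2 + \|z\|^2$ and $b_3^2\|z\|^2 \leq \|b_3\|_\infty^2 \|z\|^2$, a short bookkeeping step gives
\begin{equation*}
\frac{d}{\dt} E(t) \leq \bigl(-2\lambda_{k+1} + \sigma\bigr)\, E(t) = -\gamma_{k+1}\, E(t),
\end{equation*}
with $\sigma = 1 + 4\sum_{i=1}^3 \|b_i\|_{L^\infty_{\mathcal F}(0,T;\R)}^2$ absorbing all perturbative contributions. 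Gr\"onwall's inequality concludes.

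I do not expect a serious obstacle; the argument is essentially a stochastic Galerkin-type dissipation estimate. The only delicate point is the spectral invariance in the first step, which depends critically on the hinged condition $y = y_{xx} = 0$ so that the fourth-order and second-order operators share eigenfunctions. Under clamped conditions $y = y_x = 0$ this clean mode-by-mode decoupling would break down, which is consistent with the comment following \Cref{lem:spec_4}.
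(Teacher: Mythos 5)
Your argument is correct and is exactly the standard dissipation proof that the paper itself does not spell out but delegates to \cite[Proposition 2.3]{LU11} and \cite[Proposition 4.1]{Liu14b}: mode-by-mode invariance of the high-frequency subspace (valid because the $b_i$ are $x$-independent and the hinged fourth-order operator shares the Dirichlet eigenbasis), followed by an It\^{o} energy identity, the Poincar\'e-type bounds $\|y_{xx}\|^2\geq\lambda_{k+1}^2\|y\|^2\geq\lambda_{k+1}\|y\|^2$ and $\|z_x\|^2\geq\lambda_{k+1}\|z\|^2$, and Gr\"onwall, with your bookkeeping indeed absorbed by $\sigma=1+4\sum_{i=1}^3\|b_i\|^2_{L^\infty_{\mathcal F}(0,T;\R)}$. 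No gap; your reading of $\Pi_{\lambda_k}$ as killing the first $k$ modes is the intended one, consistent with its use in \Cref{cor:corrolary_franck}.
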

The proof of this proposition is standard and can be done by following almost the same procedure as in \cite[Proposition 2.3]{LU11} and \cite[Proposition 4.1]{Liu14b}.

Using the above results, we prove the following corollary, which will be of interest during the proof of \Cref{prop:with_cost_control}.

\begin{cor}\label{cor:corrolary_franck}
For each $\sqrt{r}\geq \lambda_1$, $\tau\in(0,1)$ and $(y_0,z_0)\in L^2(\Omega,\mathcal F_{0};L^2(\dom)^2)$, there exists a control $h_r\in L^2_{\mathcal F}(0,\tau;L^2(\dom_0))$ such that
\begin{align*}
\|h_r\|^2_{L^2_{\mathcal F}(0,\tau;L^2(\dom_0))}&\leq C\frac{e^{C{r}^{1/4}}}{\tau}\esp \left(\|y_0\|^2_{L^2(\dom)}+\|z_0\|^2_{L^2(\dom)}\right)
\end{align*}
and
\begin{align*}
\displaystyle \esp\left(\|y(\tau)\|_{L^2(\dom)}^2+\|z(\tau)\|^2_{L^2(\dom)}\right) &\leq \frac{C}{\tau}{e^{C{r}^{1/4}-{\sqrt{r}\tau }}}\esp \left(\|y_0\|^2_{L^2(\dom)}+\|z_0\|^2_{L^2(\dom)}\right).
\end{align*}
\end{cor}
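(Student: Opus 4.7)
The plan is a standard one-step Lebeau--Robbiano iteration: split the time interval $[0,\tau]$ in half, use \Cref{prop:size_control} on the first half to annihilate the low-frequency part, and then rely on the dissipation property of \Cref{prop:dissipation} on the second half with $h\equiv 0$. The combination will produce the two required bounds.

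First, I would fix the integer $k=k(r)$ as the largest index such that $\lambda_k\leq\sqrt{r}$ (which is well-defined because $\sqrt{r}\geq\lambda_1$), so that $\Pi_r=\Pi_{\lambda_k}$ and $\lambda_{k+1}>\sqrt{r}$. Applying \Cref{prop:size_control} on the interval $[0,\tau/2]$ (which still lies in $(0,1)$), I obtain a control $\tilde{h}\in L^2_{\mathcal F}(0,\tau/2;L^2(\dom_0))$ such that the solution $(\tilde{y},\tilde{z})$ of \eqref{forward_KS-H_linear} on $[0,\tau/2]$ with control $\tilde{h}$ satisfies $\Pi_r(\tilde{y}(\tau/2))=\Pi_r(\tilde{z}(\tau/2))=0$ in $\dom$ almost surely, together with
\begin{equation*}
\|\tilde{h}\|^2_{L^2_{\mathcal F}(0,\tau/2;L^2(\dom_0))}+\esp\bigl(\|\tilde{y}(\tau/2)\|^2_{L^2(\dom)}+\|\tilde{z}(\tau/2)\|^2_{L^2(\dom)}\bigr)\leq \frac{C\,e^{Cr^{1/4}}}{\tau}\,\esp\bigl(\|y_0\|^2_{L^2(\dom)}+\|z_0\|^2_{L^2(\dom)}\bigr),
\end{equation*}
after absorbing the factor $2$ coming from $\tau/2$ into the generic constant $C$.

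Next, I would define $h_r$ on $[0,\tau]$ as $\tilde{h}$ extended by zero on $(\tau/2,\tau]$. On this second half the system is uncontrolled and starts from $(\tilde{y}(\tau/2),\tilde{z}(\tau/2))$, which has vanishing projection onto $\mathcal X_r=\operatorname{span}\{\varphi_i\}_{\lambda_i\leq\sqrt{r}}$. Invoking \Cref{prop:dissipation} with this index $k$ on the interval of length $\tau/2$ yields
\begin{equation*}
\esp\bigl(\|y(\tau)\|^2_{L^2(\dom)}+\|z(\tau)\|^2_{L^2(\dom)}\bigr)\leq e^{-\gamma_{k+1}\tau/2}\,\esp\bigl(\|\tilde y(\tau/2)\|^2_{L^2(\dom)}+\|\tilde z(\tau/2)\|^2_{L^2(\dom)}\bigr),
\end{equation*}
with $\gamma_{k+1}=2\lambda_{k+1}-\sigma\geq 2\sqrt{r}-\sigma$. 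Since $\tau<1$, the factor $e^{\sigma\tau/2}$ is bounded by a constant depending only on $\sigma$, hence on $b_1,b_2,b_3$, and can be absorbed into $C$. Combining the two displays gives the announced estimate $\frac{C}{\tau}e^{Cr^{1/4}-\sqrt{r}\tau}\esp(\|y_0\|^2+\|z_0\|^2)$, while the cost bound for $h_r$ follows immediately from the one for $\tilde h$ together with $h_r\equiv 0$ on $(\tau/2,\tau]$.

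The main potential obstacle is keeping the various constants consistent: one must check that the index $k$ chosen from $\Pi_r=\Pi_{\lambda_k}$ indeed satisfies the hypothesis of \Cref{prop:dissipation} (which is automatic because $\sqrt{r}\geq\lambda_1$ forces $k\geq 1$), and that the lower-order term $\sigma$ in $\gamma_{k+1}$, together with the $1/\tau$ factors inherited from \Cref{prop:size_control}, can be absorbed into a single constant $C$ of the required form by using $\tau<1$. Everything else reduces to gluing the two pieces, which introduces no additional estimates.
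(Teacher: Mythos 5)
Your proposal is correct and follows essentially the same route as the paper: control on $(0,\tau/2)$ via \Cref{prop:size_control} to kill the modes in $\mathcal X_r$, extend the control by zero, and use the dissipation of \Cref{prop:dissipation} with $\lambda_{k+1}>\sqrt{r}$ and $\tau<1$ to absorb $e^{\sigma\tau/2}$ into the constant. No meaningful differences from the paper's argument.
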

\begin{proof}
The proof is straightforward. Use \Cref{prop:size_control} in the interval $(0,\frac{\tau}{2})$, this will give a control $w_r$ such that $\Pi_r(y(\tau/2))=\Pi_r(z(\tau/2))=0$ together with the estimates \eqref{eq:est_control_case1} and \eqref{eq:est_size_case1}, but with $\tau$ replaced by $\tau/2$. Set
\begin{equation*}
h_r=
\begin{cases}
w_r & \text{for } t\in(0,\tau/2),  \\
0 & \text{for } t\in(\tau/2,\tau).
\end{cases}
\end{equation*}
Clearly, $h_r$ and $w_r$ have the same norm. 

Now, note that there exists $k\in\mathbb N^*$ such that $\lambda_{k+1}>\sqrt{r}$. Applying \Cref{prop:dissipation} in the interval $(\tau/2,\tau)$ and the fact that $\tau\in(0,1)$ allow us to deduce  
\begin{align}\notag
\esp\left(\|y(\tau)\|^2_{L^2(\dom)}+\|z(\tau)\|^2_{L^2(\dom)}\right) & \leq e^{-(2\lambda_{k+1}-\sigma) \frac{\tau}{2}}\esp\left(\|y(\tau/2)\|^{2}_{L^2(\dom)}+\|z(\tau/2)\|^{2}_{L^2(\dom)}\right) \\ \label{est_mm}
&\leq C e^{-\lambda_{k+1}\tau} \esp\left(\|y(\tau/2)\|^{2}_{L^2(\dom)}+\|z(\tau/2)\|^{2}_{L^2(\dom)}\right),
\end{align}
since $\Pi_r(y(\tau/2))=\Pi_r(z(\tau/2))=0$ implies that the first $k$ modes of the equations have been killed. Using that $\lambda_{k+1}>\sqrt{r}$ in the above estimate and combining the result  with \eqref{eq:est_size_case1} (replacing $\tau$ replaced by $\tau/2$), we obtain the desired result. Thus, the proof is complete. 
\end{proof}

Now, we are in position to present the proof of \Cref{prop:with_cost_control}. 

\begin{proof}[Proof of \Cref{prop:with_cost_control}]
We follow the spirit of \cite[Section IV.2]{franck_calavera} and \cite{LU11}. We will pay special attention to the dependence with respect to $T$ in the following estimates. In what follows, $C$ stands for a positive constant changing from line to line depending at most on $\dom$, $\dom_0$, and the coefficients $b_i$.

The idea is to split the time interval $(0,T)$ into subintervals of size $\tau_j$, $j\geq 1$, with
\begin{equation*}
\sum_{j=1}^{\infty}\tau_j=T
\end{equation*}
and apply successively a partial control as in \Cref{cor:corrolary_franck} with a cut frequency $r_j$ tending to infinity as $j\to \infty$. We set
\begin{equation}\label{eq:def_taus_lambdas}
\tau_j=\frac{T}{2^j} \quad\text{and}\quad r_j=\beta^2(2^j)^4
\end{equation}
for some $\beta>0$ to be determined. 

Let $T_j=\sum_{k=1}^{j}\tau_k$, for $j\geq 1$. We proceed as follows.
\begin{enumerate}
\item During the interval $(0,\tau_1)=(0,T_1)$, we apply a control $h_{r_1}$ as given in \Cref{cor:corrolary_franck} with $r={r}_1$, in such a way that
\begin{align*}
\|h_{r_1}\|^2_{L^2_{\mathcal F}(0,T_1;L^2(\dom_0))}&\leq C\frac{e^{C{r_1^{1/4}}}}{\tau_1}\esp \left(\|y_0\|^2_{L^2(\dom)}+\|z_0\|^2_{L^2(\dom)}\right)
\end{align*}
and
\begin{align*}
\esp\left(\|y(T_1)\|_{L^2(\dom)}^2+\|z(T_1)\|^2_{L^2(\dom)}\right) &\leq \frac{C}{\tau_1}{e^{C{r_1^{1/4}}-{\sqrt{r_1}\tau_1 }}}\esp \left(\|y_0\|^2_{L^2(\dom)}+\|z_0\|^2_{L^2(\dom)}\right).
\end{align*}
with 
$$\Pi_{r_1}(y(T_1))=0=\Pi_{r_1}(z(T_1))=0, \quad P\textnormal{-a.s.}$$
\item During the interval $(\tau_1,\tau_1+\tau_2)$, we apply a control $h_{r_2}$ once again given by \Cref{cor:corrolary_franck} with $r=r_2$ in such a way that
\begin{align*}
\|h_{r_2}\|^2_{L^2_{\mathcal F}(T_1,T_2;L^2(\dom_0))}&\leq C\frac{e^{C{r_2^{1/4}}}}{\tau_2}\esp \left(\|y(T_1)\|^2_{L^2(\dom)}+\|z(T_1)\|^2_{L^2(\dom)}\right)
\end{align*}
and
\begin{align*}
\esp\left(\|y(T_2)\|_{L^2(\dom)}^2+\|z(T_2)\|^2_{L^2(\dom)}\right) &\leq \frac{C^2}{\tau_1\tau_2}{e^{C({r_1^{1/4}}+r_2^{1/4})-{\sqrt{r_1}\tau_1 }-{\sqrt{r_2}\tau_2}}}\esp \left(\|y_0\|^2_{L^2(\dom)}+\|z_0\|^2_{L^2(\dom)}\right).
\end{align*}
with
$$\Pi_{r_2}(y(T_2)=0)=\Pi_{r_2}(z(T_2))=0, \quad P\textnormal{-a.s.}$$
\item By an inductive procedure, we can build a control $h_{r_j}$ on the time interval $(T_{j-1},T_j)$ such that
\begin{align*}
\|h_{r_j}\|^2_{L^2_{\mathcal F}(T_{j-1},T_j;L^2(\dom_0))}&\leq C\frac{e^{C{r}_j^{1/4}}}{\tau_j}\esp \left(\|y(T_{j-1})\|^2_{L^2(\dom)}+\|z(T_{j-1})\|^2_{L^2(\dom)}\right)
\end{align*}
and
\begin{align*}
\esp&\left(\|y(T_j)\|_{L^2(\dom)}^2+\|z(T_j)\|^2_{L^2(\dom)}\right) \\
&\quad \leq \frac{C^j}{\prod_{k=1}^{j}\tau_k}{e^{C\left(\sum_{k=1}^{j}{r_k^{1/4}}\right)-\sum_{k=1}^{j}\tau_k\sqrt{r_k} }}\esp \left(\|y_0\|^2_{L^2(\dom)}+\|z_0\|^2_{L^2(\dom)}\right).
\end{align*}
with
\begin{equation}\label{eq:proy_tj}
\Pi_{r_j}(y(T_j)=0)=\Pi_{r_j}(z(T_j))=0, \quad P\textnormal{-a.s.}
\end{equation}

{Moreover, recalling the definitions in \eqref{eq:def_taus_lambdas}, we can refine the above estimates as follows
\begin{align}\label{eq:est_control_j}
\|h_{r_j}\|^2_{L^2_{\mathcal F}(T_{j-1},T_j;L^2(\dom_0))}&\leq C\frac{e^{C\sqrt{\beta}2^j}}{T}\esp \left(\|y(T_{j-1})\|^2_{L^2(\dom)}+\|z(T_{j-1})\|^2_{L^2(\dom)}\right)
\end{align}
and
\begin{align}\notag 
\esp&\left(\|y(T_j)\|_{L^2(\dom)}^2+\|z(T_j)\|^2_{L^2(\dom)}\right) \\ \notag
&\leq \left(\frac{C}{T}\right)^j{e^{[C\sqrt{\beta}-T\beta]\sum_{k=1}^j2^k}}\esp \left(\|y_0\|^2_{L^2(\dom)}+\|z_0\|^2_{L^2(\dom)}\right) \\ \notag
& \leq e^{C/T} e^{\sum_{k=1}^j 2^k} {e^{[C\sqrt{\beta}-T\beta]\sum_{k=1}^j2^k}}\esp \left(\|y_0\|^2_{L^2(\dom)}+\|z_0\|^2_{L^2(\dom)}\right)
\\
\label{eq:est_final_refinado}
& \leq e^{C/T} {e^{[C\sqrt{\beta}-T\beta]\sum_{k=1}^j2^k}}\esp \left(\|y_0\|^2_{L^2(\dom)}+\|z_0\|^2_{L^2(\dom)}\right)
\end{align}

}

\item {We note that 
\begin{align*}
[C\sqrt{\beta}-{\beta}T]\sum_{k=1}^{j}2^k =\left(C\sqrt{\beta}-{\beta}T\right)\left(2^{j+1}-2\right)
\end{align*}
and choosing $\beta$ sufficiently large so that
$
\widetilde{\beta}:={\beta}T-C\sqrt{\beta}>0
$
we can obtain from \eqref{eq:est_final_refinado}
\begin{align} \label{eq:est_ytj}
\esp&\left(\|y(T_j)\|^2_{L^2(\dom)}+\|z(T_j)\|^2_{L^2(\dom)}\right)  \leq e^{C/T} e^{-\widetilde{\beta}2^{j+1}}\esp \left(\|y_0\|^2_{L^2(\dom)}+\|z_0\|^2_{L^2(\dom)}\right).
\end{align}
}
\item Using estimate \eqref{eq:est_ytj} in \eqref{eq:est_control_j}, we obtain
{
\begin{align*}
\|h_{r_j}\|^2_{L^2_{\mathcal F}(T_{j-1},T_{j};L^2(\dom_0))} \leq  \frac{C}{T}e^{C/T}{e^{(C\sqrt{\beta}-\widetilde{\beta})2^{j}}}\esp \left(\|y_0\|^2_{L^2(\dom)}+\|z_0\|^2_{L^2(\dom)}\right),
\end{align*}
}
and increasing the value of $\beta$ to ensure that 
$
\widehat{\beta}:=\widetilde{\beta}-C\sqrt{\beta}>0
$
we obtain
{
\begin{align}\label{eq_est_cont_final}
\|h_{r_j}\|^2_{L^2_{\mathcal F}(T_{j-1},T_j;L^2(\dom_0))} \leq  \frac{C}{T}e^{C/T}e^{-\widehat{\beta}2^j}\esp \left(\|y_0\|^2_{L^2(\dom)}+\|z_0\|^2_{L^2(\dom)}\right).
\end{align}
}
\item {Estimate \eqref{eq_est_cont_final} shows that
$
\sum_{j=1}^{\infty}\|h_{r_j}\|^2_{L^2_{\mathcal F}(T_{j-1},T_j; L^2(\dom_0))}<\infty
$
and in particular the control $h$ that comes from gluing together all the $(h_{r_j})_{j\in\mathbb N^*}$ is an element of $L^2_{\mathcal F}(0,T;L^2(\dom_0))$. From \eqref{eq:proy_tj} and \eqref{eq:est_ytj} and since $T_j\to T$ as $j\to \infty$, we conclude that
\begin{equation*}
y(T)=z(T)=0 \text{ in }\dom, \ P\text{-a.s.}
\end{equation*}

Finally, to get a precise estimate of the control cost it is enough to take $\beta=\alpha/T^2$ for some $\alpha>0$ large enough (and thus $\widetilde{\beta}$ and $\widehat{\beta}$ are proportional to $1/T$) and use \eqref{eq_est_cont_final} to obtain 
\begin{align*}
\|h\|^2_{L^2_{\mathcal F}(0,T;L^2(\dom_0))} &= \sum_{j=1}^{\infty}\|h_{r_j}\|^2_{L^2_{\mathcal F}(T_{j-1},T_j; L^2(\dom_0))} \\
& \leq \frac{C}{T}e^{C/T} \left(\int_0^{+\infty} e^{-\frac{C^\prime \sigma}{T} }\d\sigma \right) \esp \left(\|y_0\|^2_{L^2(\dom)}+\|z_0\|^2_{L^2(\dom)}\right) \\
&\leq C e^{C/T} \esp \left(\|y_0\|^2_{L^2(\dom)}+\|z_0\|^2_{L^2(\dom)}\right).
\end{align*}
}
\end{enumerate}
This concludes the proof.

\end{proof}

\subsection{Step 2: source term method}

\label{sec:stochasticsource}

Let us fix $M \geq C$ where $C=C(\dom,\dom_0,b_1,b_2,b_3)$ comes from \Cref{prop:with_cost_control} and define the weight
\begin{equation}
\label{eq:defgamma}
\forall t >0,\ \gamma(t) := e^{M/t}.
\end{equation}
Let us fix $T \in (0,1)$ and take $Q \in (1, \sqrt{2})$ and $P > Q^{2}/(2-Q^{2})$. We define the weights
\begin{align}
\label{eq:rho0}
&\forall t \in [0,T),\ \rho_0(t) :=  \exp\left(- \frac{MP}{(Q-1)(T-t)}\right),\\
\label{eq:rho}
&\forall t \in [0,T),\ \rho(t) := \exp\left(-\frac{(1+P)Q^2M}{({Q}-1)(T-t)}\right).
\end{align}
For an appropriate source term $F$, we consider
\begin{equation}\label{eq:heat_source}
\begin{cases}
\d y+(y_{xxxx}+F)\dt=\chi_{\dom_0}h\,\dt+(b_1y+ b_2 z)\, \d W(t) &\text{in }Q, \\
\d z- z_{xx}\,\dt=y\,\dt+b_3z\, \d W(t) &\text{in }Q, \\
y=y_{xx}=0 &\text{on }\Sigma, \\
z=0 &\text{on }\Sigma, \\
y(0)=y_0, \quad z(0)=z_0 &\text{in } \dom. 
\end{cases}
\end{equation}
We recall that for $F\in L^2_{\fil}(0,T;L^2(\dom))$, $h \in L^2_{\fil}(0,T;L^2(\dom_0))$ and $(y_0,z_0) \in L^2(\Omega,\fil_0;L^2(\dom)^2)$, system \eqref{eq:heat_source} is well-posed according to \Cref{prop:app_well_2}.
We define associated spaces for the source term, the state and the control
\begin{align*}
& \mathcal{S} := \left\{S \in  L^2_{\fil}(0,T;L^2(\dom)) : \frac{S}{\rho} \in  L^2_{\fil}(0,T;L^2(\dom))\right\},\\
& \mathcal{Y}:= \left\{y \in L^2_{\fil}(0,T;L^2(\dom)) : \frac{y}{\rho_0} \in L^2_{\fil}(0,T;L^2(\dom))\right\},\\
& \mathcal{H} := \left\{h \in L^2_{\fil}(0,T;L^2(\dom)) : \frac{h}{\rho_{0}}  \in L^2_{\fil}(0,T;L^2(\dom))\right\}.
\end{align*}
From the behaviors near $t=T$ of $\rho$ and $\rho_0$, we deduce that each element of $\mathcal{S}$, $\mathcal{Y}$, $\mathcal{H}$ vanishes at time $t=T$.

We have the following null-controllability result for \eqref{eq:heat_source}.
\begin{prop}\label{prop:source_stochastic}
For every $(y_0,z_0) \in L^2(\Omega,\fil_0;L^2(\dom)^2)$ and $F\in \mathcal{S}$, 
there exists a control $h \in \mathcal{H}$ such that the corresponding controlled solution $(y,z)$ to \eqref{eq:heat_source} belongs to $\mathcal{Y}\times \mathcal Y$. Moreover, there exist positive constants $C=C(\dom,\dom_0, b_1, b_2, b_3)$ and $C_T = e^{C/T}$ such that
\begin{align}\notag 
\esp & \left(\sup_{0\leq t\leq T }\left\|\frac{y(t)}{\rho_0(t)}\right\|^2_{L^2(\dom)}+\sup_{0\leq t\leq T }\left\|\frac{z(t)}{\rho_0(t)}\right\|^2_{L^2(\dom)}\right)+\esp\left(\int_{Q_{\dom_0}}\left|\frac{h}{\rho_0}\right|^2\dx\dt\right) \\ \label{eq:sup_y_limit}
&\leq C_T\esp\left(\|y_0\|^2_{L^2(\dom)}+\|z_0\|^2_{L^2(\dom)}+\int_{0}^{T}  \left\| \frac{F(t)}{\rho(t)}\right\|^2_{L^2(\dom)}\dt \right).
\end{align}
In particular, since $\rho_0$ is a continuous function satisfying $\rho_0(T)=0$, the above estimate implies 
\begin{equation*}
\label{eq:ytauzero}
y(T)=z(T)=0 \quad \textnormal{in } \dom, \ \textnormal{a.s.}
\end{equation*}
\end{prop}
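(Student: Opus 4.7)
The plan is to implement the source term method of \cite{LLT13} in the spirit of its stochastic adaptation in \cite{HSLBP20a}, using \Cref{prop:with_cost_control} as the fundamental null-controllability brick. The main idea is to split the interval $[0,T)$ into infinitely many subintervals accumulating at $T$, handle the contribution of the source on each piece by letting the equation evolve freely, and then kill the resulting state on the next piece with a control furnished by \Cref{prop:with_cost_control}. The weights $\rho_0$ and $\rho$ in \eqref{eq:rho0}--\eqref{eq:rho} are engineered precisely so that the exponential blow-up of the control cost $C_{\tau_k}=e^{C/\tau_k}$ on each piece is absorbed by the decay of the source and of the state.

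First, I would introduce the geometric partition $T_k := T(1-Q^{-k})$ for $k\geq 0$, so that $T_0=0$, $T_k\nearrow T$ and $\tau_k := T_{k+1}-T_k = T(Q-1)Q^{-(k+1)}$. Writing $F_k := F\chi_{(T_k,T_{k+1})}$, I would decompose the desired trajectory as $(y,z) = \sum_{k\geq -1}(y_k,z_k)$ and the control as $h = \sum_{k\geq -1} h_k$, where:
\begin{itemize}
\item the piece $(y_{-1},z_{-1})$ is the controlled trajectory on $(0,T_1)$ starting from $(y_0,z_0)$, with no source, driven to zero at $T_1$ by a control $h_{-1}$ provided by \Cref{prop:with_cost_control} on the interval $(0,T_1)$;
\item for $k\geq 0$, the piece $(y_k,z_k)$ solves \eqref{eq:heat_source} on $(T_k,T_{k+1})$ with zero initial data, source $F_k$ and no control; it produces an $\fil_{T_{k+1}}$-measurable state $(a_k,\alpha_k):=(y_k,z_k)(T_{k+1})$; then on $(T_{k+1},T_{k+2})$ I would prolong $(y_k,z_k)$ as the controlled trajectory with no source, initial data $(a_k,\alpha_k)$ at time $T_{k+1}$, and control $h_k$ furnished by \Cref{prop:with_cost_control} applied on $(T_{k+1},T_{k+2})$ so that $(y_k,z_k)(T_{k+2})=0$ a.s.; after $T_{k+2}$, extend $(y_k,z_k)$ and $h_k$ by zero.
\end{itemize}
By construction every $h_k$ is $\fil$-adapted and supported in $(T_k,T_{k+2})$, each $(y_k,z_k)$ is supported in $(T_k,T_{k+2})$ as a process, and summation together with the linearity of \eqref{eq:heat_source} gives a solution $(y,z)$ associated with source $F$, initial datum $(y_0,z_0)$ and control $h$.

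The analytic heart of the argument consists in controlling the weighted norms of $h_k$ and $(y_k,z_k)$. Energy estimates for \eqref{forward_KS-H_linear} with source bound $\esp(\|(a_k,\alpha_k)\|^2_{L^2(\dom)^2})$ by $C\tau_k\,\esp\int_{T_k}^{T_{k+1}}\|F(t)\|^2_{L^2(\dom)}\d t$, and then \Cref{prop:with_cost_control} applied on $(T_{k+1},T_{k+2})$ together with its associated energy estimate produces
\[
\esp\!\left(\int_{T_{k+1}}^{T_{k+2}}\!\!\!\int_{\dom_0}|h_k|^2\dx\dt\right) + \sup_{t\in[T_{k+1},T_{k+2}]}\esp\!\left(\|(y_k,z_k)(t)\|_{L^2(\dom)^2}^2\right) \leq C\,e^{C/\tau_{k+1}}\,\tau_k\int_{T_k}^{T_{k+1}}\!\esp\!\left(\|F(t)\|_{L^2(\dom)}^2\right)\dt.
\]
Since $F/\rho\in L^2_\fil(0,T;L^2(\dom))$, on the slab $(T_k,T_{k+1})$ the source is pointwise bounded by $\rho(t)\,(F/\rho)(t)$, and a direct computation with the explicit forms of $\rho_0$, $\rho$ and $\tau_k$ shows that $e^{C/\tau_{k+1}}\rho^2(t)\lesssim \rho_0^2(s)$ uniformly for $t\in(T_k,T_{k+1})$ and $s\in(T_{k+1},T_{k+2})$, thanks to the tuning of the exponents $(MP,(1+P)Q^2M)$ relative to the control-cost constant $M\geq C$ in \eqref{eq:defgamma}. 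In turn, the geometric series $\sum_k$ converges and yields \eqref{eq:sup_y_limit} with constant $C_T=e^{C/T}$. I would perform the same bookkeeping for the piece $(y_{-1},z_{-1})$, which contributes the $\esp\|(y_0,z_0)\|_{L^2(\dom)^2}^2$ term. Finally, since $\rho_0(t)\to 0$ as $t\to T$ and $(y,z)/\rho_0\in L^\infty_\fil(0,T;L^2(\dom)^2)$ (the supremum part of \eqref{eq:sup_y_limit}), continuity of the trajectory in time forces $y(T)=z(T)=0$ a.s.

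The main obstacle is the bookkeeping with the weights: one has to verify that the specific exponents in \eqref{eq:rho0}--\eqref{eq:rho}, together with $Q\in(1,\sqrt 2)$ and $P>Q^2/(2-Q^2)$, make $e^{C/\tau_{k+1}}\rho(t)^2$ uniformly dominated by $\rho_0(s)^2$ on the relevant slabs, which is the reason for those precise algebraic conditions. The rest is adaptedness and a standard passage from the a.s.\ pointwise summation to summation in $\mathcal Y$ and $\mathcal H$ by monotone convergence, exactly as in \cite{HSLBP20a}.
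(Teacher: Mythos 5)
Your plan is correct and is essentially the paper's own proof: the same geometric grid $T_k=T(1-Q^{-k})$, the same alternation of free evolution with the localized source and controlled evolution killing the inherited state via \Cref{prop:with_cost_control}, and the same weight relation $\rho_0(T_{k+2})=\rho(T_k)\gamma(T_{k+2}-T_{k+1})$ (with $M\geq C$) to absorb the control cost before summing the series. The only difference is cosmetic bookkeeping — you index overlapping pieces $(y_k,z_k)$ supported on $(T_k,T_{k+2})$, while the paper splits each subinterval as $(y_1,z_1)+(y_2,z_2)$ — but the two decompositions coincide term by term.
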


The proof follows the lines of those shown in \cite[Section 2.2]{HSLBP20a}. For completeness, we give a sketch in \Cref{proof:source}.

\subsection{Step 3: regular controlled trajectories}\label{sec:regular}
\indent The next proposition gives more information on the regularity of the controlled trajectory obtained in \Cref{prop:source_stochastic}. Let us define 
\begin{equation}\label{eq:def_rho_hat_exact}
\hat{\rho}(t) =  \exp\left(- \frac{M\zeta}{(Q-1)(T-t)}\right),\quad  \text{with}\ \frac{(1+P)Q^2}{2} < \zeta < P.
\end{equation}
It is straightforward to see that
\begin{gather}
\label{eq:defrho}
\rho_0 \leq C \hat{\rho},\ \rho \leq C \hat{\rho},\ |\hat{\rho}'| \rho_0 \leq C \hat{\rho}^2, \\ \label{eq:hat_rho_bound}
 {\hat{\rho}^2 \leq C \rho}.
\end{gather}

\begin{prop}\label{prop:SourceTermReg}
For every $(y_0,z_0) \in L^2(\Omega,\fil_0;H^2(\dom)\cap H_0^1(\dom))\times L^2(\Omega,\fil_0; H_0^1(\dom)) $, $F \in \mathcal{S}$, there exists a control $h \in \mathcal{H}$, such that the solution $y$ of \eqref{eq:heat_source} satisfies the following estimate
\begin{align}\notag 
\esp & \left(\sup_{0\leq t\leq T }\left\|\frac{y(t)}{\hat\rho(t)}\right\|^2_{H^2(\dom)}+\sup_{0\leq t\leq T }\left\|\frac{z(t)}{\hat \rho(t)}\right\|^2_{H^1_0(\dom)}\right) + \esp\left(\int_{0}^{T}\left\| \frac{y(t)}{\hat\rho(t)}\right\|^2_{H^4(\dom)}\dt+\int_{0}^{T}\left\|\frac{z(t)}{\hat \rho(t)}\right\|^2_{H^2(\dom)}\dt\right)    \\ \label{eq:reg_extra}
&\leq C_T\esp\left(\|y_0\|^2_{H^2(\dom)\cap H_0^1(\dom)}+\|z_0\|^2_{H_0^1(\dom)}+\int_{0}^{T}  \left\| \frac{F(t)}{\rho(t)}\right\|^2_{L^2(\dom)} \dt \right),
\end{align}
where $C_T = e^{C/T}$ with $C=C(\dom,\dom_0, b_1,b_2, b_3)>0$.

In particular, since $\hat{\rho}$ is a continuous function satisfying $\hat{\rho}(T)=0$, the above estimate implies 
\begin{equation*}
y(T)=0 \quad \textnormal{in } \dom, \ \textnormal{a.s.}
\end{equation*}
\end{prop}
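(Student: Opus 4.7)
The plan is to bootstrap \Cref{prop:source_stochastic} via a deterministic weighted change of variables together with higher-order energy estimates for coupled forward stochastic parabolic equations. First, I would take the control $h\in\mathcal{H}$ and the solution $(y,z)$ of \eqref{eq:heat_source} given by \Cref{prop:source_stochastic}, and introduce the rescaled pair $\tilde{y}:=y/\hat{\rho}$, $\tilde{z}:=z/\hat{\rho}$. Since $\hat{\rho}\in C^1([0,T))$ is deterministic, Itô's formula produces a new coupled forward system of the same type,
\begin{equation*}
\begin{cases}
\d\tilde{y}+\tilde{y}_{xxxx}\dt=\bigl(-\tfrac{\hat{\rho}'}{\hat{\rho}}\tilde{y}-\tfrac{F}{\hat{\rho}}+\tfrac{\chi_{\dom_0}h}{\hat{\rho}}\bigr)\dt+(b_1\tilde{y}+b_2\tilde{z})\,\d W(t), \\
\d\tilde{z}-\tilde{z}_{xx}\dt=\bigl(\tilde{y}-\tfrac{\hat{\rho}'}{\hat{\rho}}\tilde{z}\bigr)\dt+b_3\tilde{z}\,\d W(t),
\end{cases}
\end{equation*}
with initial data $\tilde{y}(0)=y_0/\hat{\rho}(0)\in H^2(\dom)\cap H_0^1(\dom)$, $\tilde{z}(0)=z_0/\hat{\rho}(0)\in H_0^1(\dom)$, and the same boundary conditions $\tilde{y}=\tilde{y}_{xx}=0$ and $\tilde{z}=0$ on $\Sigma$.

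Second, I would check that every new drift contribution belongs to $L^2_\fil(0,T;L^2(\dom))$ with norm controlled by the right-hand side of \eqref{eq:reg_extra}. Using $\rho\leq C\hat{\rho}$ from \eqref{eq:defrho} gives $\|F/\hat{\rho}\|_{L^2}\leq C\|F/\rho\|_{L^2}$; using $\rho_0\leq C\hat{\rho}$ gives $\|\chi_{\dom_0}h/\hat{\rho}\|_{L^2}\leq C\|h/\rho_0\|_{L^2}$; and using the key relation $|\hat{\rho}'|\rho_0\leq C\hat{\rho}^2$ from \eqref{eq:defrho} gives the pointwise bound $|(\hat{\rho}'/\hat{\rho})\tilde{w}|\leq C|w|/\rho_0$ for $w\in\{y,z\}$. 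Each of these is finite by \Cref{prop:source_stochastic}, so the transformed drift is controlled in $L^2_\fil(0,T;L^2(\dom))$ by the right-hand side of \eqref{eq:reg_extra}.

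Third, I would apply the higher-regularity well-posedness theory for forward stochastic parabolic equations (the natural $H^2$/$H^4$ lift of \Cref{prop:app_well_2}) to this transformed system. A clean way to proceed is to treat $\tilde{z}$ first: the drift $\tilde{y}-(\hat{\rho}'/\hat{\rho})\tilde{z}\in L^2_\fil(0,T;L^2(\dom))$ and the $H_0^1$-initial datum yield $\tilde{z}\in L^2_\fil(\Omega;C([0,T];H_0^1(\dom)))\cap L^2_\fil(0,T;H^2(\dom))$ by the standard stochastic heat estimate. Then, regarding $b_2\tilde{z}$ as an Itô coefficient with $H_0^1$-regularity and using the Navier boundary conditions (which diagonalise the bi-Laplacian along $\{\sin(i\pi x)\}$, cf.\ the discussion after \Cref{lem:spec_4}), the same machinery applied to the $\tilde{y}$-equation returns $\tilde{y}\in L^2_\fil(\Omega;C([0,T];H^2(\dom)))\cap L^2_\fil(0,T;H^4(\dom))$. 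The main difficulty lies precisely in closing the $H^2$-energy identity for the fourth-order equation: one must absorb the Itô correction $\|b_1\tilde{y}+b_2\tilde{z}\|_{H^2}^2$ (hence the prior $H_0^1$-regularity on $\tilde{z}$), and the Navier boundary conditions are essential to kill boundary contributions when testing against $\tilde{y}_{xxxx}$. Summing both estimates and rewriting every source term via step~2 produces \eqref{eq:reg_extra}; since $\hat{\rho}$ is continuous with $\hat{\rho}(T)=0$, the finiteness of the left-hand side forces $y(T)=z(T)=0$ in $\dom$ a.s.
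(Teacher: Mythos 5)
Your proposal is correct and follows essentially the same route as the paper: rescale by the deterministic weight $\hat{\rho}$, derive the transformed system via It\^{o}'s (product) rule, control the new drift terms through \eqref{eq:defrho} and the bounds of \Cref{prop:source_stochastic}, and invoke the maximal regularity estimate for the hinged system (\Cref{prop:app_well_2}, part b)). The only difference is that you apply the heat and fourth-order regularity estimates sequentially instead of the coupled estimate in one shot, which is an inessential variation.
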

The proof of \Cref{prop:SourceTermReg} is can be obtained as in \cite[Proposition 2.6]{HSLBP20a}. It is enough to consider  a control $h \in \mathcal{H}$ and $(y,z) \in \mathcal{Y}\times \mathcal Y$ the corresponding controlled solution provided by \Cref{prop:source_stochastic} and define $w:=\frac{y}{\hat{\rho}}$, $r=\frac{z}{\hat \rho}$. By means of It\^{o}'s formula, one derives the equations in the new variable and apply the maximal regularity estimate of \Cref{prop:app_well_2} together with estimates \eqref{eq:defrho}.


\subsection{Step 4: global and \emph{statistical} null controllability}\label{sec:contr_nonlin}

\subsubsection{Proof of the global null-controllability result}

Here, we will achieve the proof \Cref{th:mainresult1}. Let us fix $T\in(0,1)$. In what follows, we denote by $C=C_T$ positive constants (that may change from line to line) of the form $e^{C/T}$ with $C>0$ depending at most on $\dom$, $\dom_0$, $b_1$, $b_2$, and $b_3$. 

For the sake of clarity, we divide the proof in three steps.

\smallskip


\textit{Step 1.} Here, we will provide a Lipschitz-type estimate for the nonlinear function $f$, where we recall that $f(y,y_x)=y y_x$. 

We take $t\in(0,T)$ and consider pairs $(y_1,z_1),(y_2,z_2)\in X_t$.  We have
\begin{align}\notag
\norme{f(y_1,y_{1,x})-f(y_2,y_{2,x})}_{L^2(\dom)}&=\norme{y_1 y_{1,x}-y_{2}y_{2,x}}_{L^2(\dom)} \\ \label{eq:norm_f_dif}
&\leq \norme{(y_1-y_2)y_{1,x}}_{L^2(\dom)}+\norme{y_2(y_{1,x}-y_{2,x})}_{L^2(\dom)}=: I_{1}+I_{2}
\end{align}
Using H\"{o}lder inequality and the definition of the $H^1$-norm, we readily see that
\begin{align*}
I_1&\leq \norme{y_1-y_2}_{L^\infty(\dom)}\norme{y_{1,x}}_{L^2(\dom)} \leq \norme{y_1-y_2}_{L^\infty(\dom)}\norme{y_1}_{H^1(\dom)}
\end{align*}
and using the Sobolev embedding gives
\begin{align}\label{eq:I1_lip}
I_1\leq C \norme{y_1-y_2}_{H^1(\dom)}\norme{y_1}_{H^1(\dom)},
\end{align}
for some constant $C>0$ only depending on $\dom$. In the same spirit, it is not difficult to see that that
\begin{align}\label{eq:I2_lip}
I_2&\leq \norme{y_2}_{L^\infty}\norme{y_{1,x}-y_{2,x}}_{L^2(\dom)} \leq C\norme{y_{2}}_{H^1(\dom)}\norme{y_1-y_2}_{H^1(\dom)}.
\end{align}
Thus, putting \eqref{eq:I1_lip}--\eqref{eq:I2_lip} in \eqref{eq:norm_f_dif} yields
\begin{equation*}
\norme{f(y_1,y_{1,x})-f(y_2,y_{2,x})}_{L^2(\dom)} \leq C \norme{y_{1}-y_{2}}_{H^1(\dom)} \left(\norme{y_1}_{H^1(\dom)}+\norme{y_2}_{H^1(\dom)}\right)
\end{equation*}

Since the weight functions $\rho$ and $\hat \rho$ are independent of the variable $x$, we can introduce them in the above inequality and using property \eqref{eq:hat_rho_bound} we can deduce
\begin{equation}\label{eq:est_lipschitz}
\norme{\frac{f(y_1,y_{1,x})-f(y_2,y_{2,x})}{\rho}} \leq C \norme{\frac{y_1-y_2}{\hat\rho}}_{H^1(\dom)} \left(\norme{\frac{y_1}{\hat \rho}}_{H^1(\dom)}+\norme{\frac{y_2}{\hat\rho}}_{H^1(\dom)}\right).
\end{equation}
This completes Step 1. 

\smallskip
\textit{Step 2.} Let us recall the functions defined in \eqref{c_tr} and \eqref{eq:def_fR}, given by
\begin{align*}
\varphi_{R}=\begin{cases}
1& \mbox{if } x\leq R, \\
0& \mbox{if } x\geq 2R, \\
\end{cases}
\end{align*}
\begin{equation*}
f_{R}(y,y_x)=\varphi_{R}(\|(y,z)\|_{X_t})f(y,y_x).
\end{equation*} 
Without loss of generality, we assume that 
\begin{equation}\label{b_0}
\|(y_2,z_2)\|_{X_t}\leq \|(y_1,z_1)\|_{X_t}.
\end{equation}
Then, applying triangle inequality we get 
\begin{align}\label{b_1}
\left\|\frac{f_{R}(y_1,y_{1,x})-f_{R}(y_2,y_{2,x})}{\rho}\right\|_{L^2(\dom)}\leq I_1+I_2,
\end{align}
where
\begin{align*}
I_1:=&\left\|\frac{\left(\varphi_{R}(\|(y_1,z_1)\|_{X_t})-\varphi_{R}(\|(y_2,z_2)\|_{X_t})\right)f(y_2,y_{2,x})}{\rho}\right\|_{L^2(\dom)},\\
I_2:=&\left\|\frac{\varphi_{R}(\|(y_1,z_1)\|_{X_t})\left(f(y_1,y_{1,x})-f(y_2,y_{2,x})\right)}{\rho}\right\|_{L^2(\dom)}.
\end{align*}
Note that the mean value theorem and \eqref{deriv_varphi_R} imply that 
\begin{align}\notag
&\left|\varphi_{R}(\|(y_1,z_1)\|_{X_t})-\varphi_{R}(\|(y_2,z_2)\|_{X_t})\right|\\ \label{b_2}
&\quad \leq C/R\left|\|(y_1,z_1)\|_{X_t}-\|(y_2,z_2)\|_{X_t}\right|\chi_{\left\{\|(y_2,z_2)\|_{X_t}\leq 2R\right\}}.
\end{align}
On the other hand, the property \eqref{eq:hat_rho_bound} and the H\"older inequality yield that
\begin{equation}\label{b_3}
\left\|\frac{f(y_2,y_{2,x})}{\rho}\right\|^2_{L^2(\dom)}\leq C\left\|\frac{y_2}{\hat{\rho}}\right\|^2_{H^{1}(\dom)}.
\end{equation}
In this way, inequalities \eqref{b_2}--\eqref{b_3} and the embedding $H^2(\dom)\hookrightarrow H^1(\dom)$ allows us to conclude that
\begin{align*}
I_1&\leq C/R \left\|(y_2,z_2)-(y_1,z_1)\right\|_{X_t}\left\|\frac{y_2}{\hat{\rho}}\right\|_{H^2(\dom)}\chi_{\left\{\|(y_2,z_2)\|_{X_t}\leq 2R\right\}}\\
&\leq CR\left\|(y_2,z_2)-(y_1,z_1)\right\|_{X_t}.
\end{align*}

Now, 
%
%
from estimate \eqref{eq:est_lipschitz} obtained in the Step 1 along with the definition of $\varphi_{R}$ and the assumption \eqref{b_0} yield that 
\begin{align*}
I_2&\leq C\left\|\frac{y_1-y_2}{\hat{\rho}}\right\|_{H^{1}(\dom)}\left(\left\|\frac{y_1}{\hat{\rho}}\right\|_{H^1(\dom)}+\left\|\frac{y_2}{\hat{\rho}}\right\|_{H^1(\dom)}\right)\chi_{\{\|(y_1,z_1)\|_{X_t}\leq 2R\}}\\
&\leq CR \left\|\frac{y_1-y_2}{\hat{\rho}}\right\|_{H^{1}(\dom)}.
\end{align*}
Finally, replacing the estimates for the terms $I_1$ and $I_2$ in inequality \eqref{b_1} we get
\begin{align}\label{inq:trun}
\left\|\frac{f_{R}(y_1,y_{1,x})-f_{R}(y_2,y_{2,x})}{\rho}\right\|_{L^2(\dom)}\leq CR\left(\left\|(y_2,z_2)-(y_1,z_1)\right\|_{X_t}+ \left\|\frac{y_1-y_2}{\hat{\rho}}\right\|_{H^{1}(\dom)}\right).
\end{align}

\smallskip
\textit{Step 3.} Consider the following map

\begin{equation*}
\mathcal{N}: F \in \mathcal{S} \to f_{R}(y,y_x)\in \mathcal{S},
\end{equation*}
where $y$ is taken from $(y,z)$ the solution of the system \eqref{eq:heat_source}. First, we prove that $\mathcal{N}$ is well defined. 

From inequality \eqref{inq:trun} we can deduce that 
\begin{align*}
\esp\left(\int_0^T\left\|\frac{f_R(y_1,y_{1,x})}{\rho}\right\|^2_{L^2(\dom)}\d{t}\right)\leq C^2R^2\esp\left(\int_0^T\left\|\frac{y_1}{\hat{\rho}}\right\|^2_{H^1(\dom)}\d{t}+\int_0^T\left\|(y_1,z_1)\right\|^2_{X_t}\d{t}\right). 
\end{align*}

Using that $\|\cdot\|_{X_t}\leq \|\cdot\|_{X_T}$ for all $t\in [0,T]$ and estimate \eqref{eq:reg_extra} in the above inequality we get
\begin{align}\notag 
\esp&\left(\int_0^T\left\|\frac{f_R(y_1,y_{1,x})}{\rho}\right\|^2_{L^2(\dom)}\d{t}\right)\\ \label{eq:est_wpp}
&\leq C_T\esp\left(\|y_0\|^2_{H^2(\dom)\cap H_0^1(\dom)}+\|z_0\|^2_{H_0^1(\dom)}+\int_{0}^{T}  \left\| \frac{F(t)}{\rho(t)}\right\|^2_{L^2(\dom)} \dt \right) <+\infty,
\end{align}
and therefore, we can conclude that $f_R(y,y_x)\in \mathcal{S}$.

To prove that $\mathcal{N}$ is a strictly contraction mapping we apply a similar argument as previously. It can be deduced from inequality \eqref{inq:trun}, the fact that  that $\|\cdot\|_{X_t}\leq \|\cdot\|_{X_T}$ for all $t\in [0,T]$, and estimate \eqref{eq:reg_extra}, that 
\begin{align*}
\esp\left(\int_0^T\left\|\frac{f_{R}(y_1,y_{1,x})-f_{R}(y_2,y_{2,x})}{\rho}\right\|^2_{L^2(\dom)}\d{t}\right)\leq CC_TR^2 \esp\left(\int_0^T\left\|\frac{F_1(t)-F_2(t)}{\rho}\right\|^2_{L^2(\dom)}\d{t}\right),
\end{align*}
which is equivalent to 
\begin{align*}
\|\mathcal{N}(F_1)-\mathcal{N}(F_2)\|_{\mathcal{S}}\leq C^2R^2 \|F_1-F_2\|_{\mathcal{S}}.
\end{align*}
Thus, if we choose $R$ such that $$C^2R^2<1,$$
we conclude that $\mathcal{N}$ is a strictly contraction mapping. This yields that $\mathcal N$ has a unique fixed point $F$ on $\mathcal S$ and the associated trajectory to this $F$ verifies the equation \eqref{forward_KS-H_nonlin_R} and satisfies \eqref{eq:ynulth1}.  The estimate \eqref{eq:estimatenonlinearityPropfR} follows from this fact, \eqref{eq:est_wpp} and \eqref{eq:reg_extra}. This ends the proof of \Cref{th:mainresult1}

\subsubsection{Proof of the \emph{statistical} controllability result}
Here, we present the proof of \Cref{th:mainresult2}. Markov's inequality and estimate \eqref{eq:estimatenonlinearityPropfR} coming from \Cref{th:mainresult1} imply that
\begin{align*}
\mathbb{P}\left(\|(y,z)\|^2_{X_T}\leq R^2\right)&\leq \frac{\esp\left(\|(y,z)\|^2_{X_T}\right)}{R^2}\\
&\leq \frac{e^{2C/T}}{R^2}\esp\left(\|y_0\|^2_{H^2(\dom)\cap H_0^1(\dom)}+\|z_0\|^2_{H_0^1(\dom)}\right),
\end{align*}
{whence, by hypothesis \eqref{eq:small_data}, we get}
\begin{align*}
\mathbb{P}\left(\|(y,z)\|^2_{X_T}\leq R^2\right)&<\frac{e^{2C/T}}{R^2} \delta^2<\epsilon,
\end{align*}
and this concludes the proof.

\section{Further remarks and conclusions}\label{sec:further}

We finish our work by presenting some concluding remarks regarding the controllability of fourth- and second-order stochastic parabolic systems.

\begin{enumerate}

\item \textit{On the backward nonlinear system}. In the context of \Cref{main_teo}, we have presented a stochastic model that resembles very much the stabilized Kuramoto-Sivashinsky system \eqref{KS-H}. Nevertheless, system \eqref{backward_KS-H} is linear and therefore it would be interesting to treat the nonlinear case. According to \cite[Section 4]{HSLBP20a}, it is indeed possible to adapt the \emph{statistical} null-controllability framework to backward equations. So, in this case, a careful analysis with respect to $T$ of the constant $C$ in the observability inequality \eqref{eq:obs_forward} is the first step to perform the methodology. To keep this paper at a reasonable length, we have decided not to pursue this goal but it is indeed a doable task.

\item \textit{Controlling from the heat equation}. The main results we have presented deal with the case where the systems are being controlled from the fourth-order equation and thus a natural question that arises is the possibility of controlling from the second-order parabolic equation. In this direction, it seems that \Cref{th:mainresult1,th:mainresult2} can be adapted without major modifications, since we have at hand a spectral inequality for the heat equation (see, e.g. \cite{LR95}) and the same methodology for proving \Cref{prop:obs_ineq_Xk} still applies. Nonetheless, a rigorous proof is still needed. 

However, the case of the backward equation \eqref{backward_KS-H} requires a more delicate analysis. In the deterministic setting, this question was answered in \cite{cc16} by proving a Carleman estimate for the fourth-order equation with non-homogeneous boundary conditions (see Theorem 3.5 in the aforementioned reference). The proof is based on duality arguments and requires to define the solution of the corresponding equation by transposition. In this regard, we think that the results from \cite{Gao18} to define the solution of a fourth-order stochastic equation by transposition  can be combined  with the well-known duality analysis of \cite{Liu14} to deduce the analogous result in the stochastic framework. This will be analyzed in a forthcoming paper \cite{peralta}.

\item \textit{More general coupling for the forward equation}. For the case of \Cref{th:mainresult1,th:mainresult2}, we can consider the more general coupled system 
\begin{equation}\label{eq:conc_forw}
 \begin{cases}
\d y+(y_{xxxx}+yy_x)\,\dt=(a_1 y+a_2z+\chi_{\dom_0}h)\dt+(b_1y+ b_2 z)\, \d W(t) &\text{in }Q, \\
\d z- z_{xx}\,\dt=(a_3y+a_4z)\dt+ b_3z\d W(t) &\text{in }Q, 
\end{cases}
\end{equation}
where $a_i\in L^\infty_{\mathcal F}(0,T;\mathbb R)$ and $a_3(t)\geq d_0$ or $a_3(t)\leq -d_0$ for some positive constant $d_0$ and for all $t\in[0,T]$. This comes from the fact that \Cref{prop:obs_ineq_Xk} is also valid for this system according to \cite{Liu14b}. The rest of the proof can be followed exactly.
 
It is important to mention that here we are still considering only time-dependent coefficients. As discussed in \cite[Remark 1.4]{HSLBP20a}, considering more general $x$-dependent coefficients is out of reach. 

\item \textit{A true local null-controllability resut}. We may wonder if a local null-controllability result holds for \eqref{forward_KS-H_nonlin}, that is, if there exists $\delta >0$ such that for every initial data $(y_0,z_0) \in L^2(\Omega,\fil_0;H^2(\dom)\cap H_0^1(\dom))\times L^2(\Omega,\fil_0; H_0^1(\dom)) $ verifying 
\[\norme{(y_0,z_0)}_{L^2(\Omega,\fil_0;H^2(\dom)\cap H_0^1(\dom)\times H_0^1(\dom))} \leq \delta,\]
 one can find $h \in L^2_\fil(0,T;L^2(\dom_0))$ such that the solution $(y,z) \in L_{\mathcal{F}}^2(\Omega; C([0,T];H^2(\dom)))\times  L_{\mathcal{F}}^2(\Omega; C([0,T];H_0^1(\dom)))$ of \eqref{forward_KS-H_nonlin} satisfies $y(T, \cdot) = 0$, a.s. With our current approach it seems that is not possible to get rid of the truncation procedure which is the main limitation to study the original problem. This remains as an open problem, even for the simpler nonlinear heat equation.

\item \textit{Higher dimensions}. Here we have limited ourselves to one-dimensional problems. In the case of \Cref{main_teo}, we expect that the analysis can be extended to dimensions $N\geq 2$ thanks to the very recent results of \cite{GK19} on deterministic Carleman estimates for fourth-order equations in higher dimensions. Of course, the first step is to translate the results in \cite{GK19} to the stochastic setting and this deserves further special attention. 

In the case of \Cref{th:mainresult1,th:mainresult2}, the translation of the results to higher dimensions is indeed possible but with some modifications. Indeed, the Lebeau-Robbiano technique used to prove \Cref{prop:with_cost_control} and the adaptation of the source term method are somehow independent of the dimension, but the structure of the nonlinearity should change to carry out the analysis in \Cref{sec:contr_nonlin}. As pointed out in \cite{HSLBP20a}, considering nonlinearities depending on the gradient in dimension greater than one is not so clear at the moment and only power nonlinerities of the form $f(y)=y^{p}$ for some $p>0$ only depending on $N$ can be treated. 
 
\end{enumerate}

\appendix


\section{Proof of \Cref{prop:source_stochastic}}\label{proof:source}

In the following, the constant $C_T>0$ is always of the form $\exp(C/T)$ where $C>0$ only depends on $\dom,\dom_0, b_1,b_2$, and $b_3$. In addition, $C_T$ can vary from line to line.

We define $T_k=T-\frac{T}{Q^{k}}$ for $k\geq 0$. We can easily deduce the following relation between the weights defined in \eqref{eq:defgamma}, \eqref{eq:rho0} and \eqref{eq:rho}
\begin{equation}
\label{eq:relationweights}
\rho_0(T_{k+2})=\rho(T_{k})\gamma(T_{k+2}-T_{k+1}). 
\end{equation}

We consider the equation 
\begin{equation}\label{eq:y_1}
\begin{cases}
\d y_1+(y_{1,xxxx}+F)\dt=(b_1y_1+ b_2 z_1)\, \d W(t) &\text{in } \dom \times (T_k,T_{k+1}), \\
\d z_1- z_{1,xx}\,\dt=y_1\,\dt+b_3z_1\, \d W(t) &\text{in } \dom\times (T_k,T_{k+1}), \\
y_1=y_{1,xx}=0 &\text{on } \{0,1\}\times(T_k,T_{k+1}), \\
z_1=0 &\text{on }\{0,1\}\times(T_k,T_{k+1}), \\
y_1(T_k)=z_1(T_k)=0 &\text{in } \dom. 
\end{cases}
\end{equation}

We introduce the sequence of random variables $\{(a_{k},b_{k})\}_{k\geq 0}$ such that
\begin{equation*}
(a_0,b_0) = (y_0,z_0)\ \text{and}\ (a_{k+1},b_{k+1})=(y_1(T_{k+1}),z_1(T_{k+1})).
\end{equation*}
We also consider the equation
\begin{equation}\label{eq:y_2}
\begin{cases}
\d y_2+y_{,2xxxx}\dt=\chi_{\dom_0}h_k\,\dt+(b_1y_2+ b_2 z_2)\, \d W(t) &\text{in } \dom \times (T_k,T_{k+1}), \\
\d z_2- z_{2,xx}\,\dt=y_2\,\dt+b_3z_2\, \d W(t) &\text{in }  \dom \times (T_k,T_{k+1}), \\
y_2=y_{2,xx}=0 &\text{on } \{0,1\}\times (T_k,T_{k+1}), \\
z_2=0 &\text{on } \{0,1\}\times (T_k,T_{k+1}), \\
y_2(T_k)=a_k, \quad z(T_k)=b_k &\text{in } \dom. 
\end{cases}
\end{equation}
Observe that due to the regularity of the solution of \eqref{eq:y_1}, each $a_{k},b_k$ with $k\geq 0$, is $\mathcal F_{T_k}$-measurable and belongs to $L^2(\Omega\times \dom)$. Therefore, thanks to \Cref{prop:app_well_2} the system \eqref{eq:y_2} is well posed for each $h_k\in L^2_{\mathcal F}(T_k,T_{k+1};L^2(\dom))$.

According to \Cref{prop:with_cost_control}, we can construct a control $h_k\in L^2_{\mathcal F}(T_k,T_{k+1};L^2(\dom))$ such that
\begin{equation*}
y_2(T_{k+1})=z_2(T_{k+1})=0, \quad \textnormal{a.s.}
\end{equation*}
and the following estimates holds
\begin{equation}\label{eq:cost_hk}
\esp\left(\int_{T_k}^{T_{k+1}}\!\!\!\!\int_{\dom_0}|h_k(t,x)|^2\dx\dt\right)\leq \gamma^2(T_{k+1}-T_k)\esp\left(\|a_k\|^2_{L^2(\dom)}+\|b_k\|^2_{L^2(\dom)}\right).
\end{equation}
On the other hand, appliying \Cref{prop:app_well_2} to \eqref{eq:y_1}, we have
\begin{align}\label{eq:ener_y1}
\esp&\left(\|a_{k+1}\|_{L^2(\dom)}^2+\|b_{k+1}\|_{L^2(\dom)}^2\right) \leq C_T\esp \left(\int _{T_k}^{T_{k+1}} \|F(t)\|^2_{L^2(\dom)}\dt\right) .
\end{align}

Then, combining estimates  \eqref{eq:cost_hk} and \eqref{eq:ener_y1} we get
\begin{align*}
&\esp\left(\int_{T_{k+1}}^{T_{k+2}}\!\!\!\int_{\dom_0}|h_{k+1}(t,x)|^2\dx\dt\right) \leq C_T\gamma^2\left(T_{k+2}-T_{k+1}\right)\esp\left(\int _{T_k}^{T_{k+1}} \|F(t)\|^2_{L^2(\dom)}\dt\right)
\end{align*}
and using the fact that $\rho$ is a non-increasing, deterministic function together with equality \eqref{eq:relationweights} yield
\begin{align*}
&\esp\left(\int_{T_{k+1}}^{T_{k+2}}\!\!\!\int_{\dom_0}|h_{k+1}(t,x)|^2\dx\dt\right) \leq  C_T \rho_0^2\left(T_{k+2}\right) \esp\left(\int_{T_k}^{T_{k+1}}\left\|\frac{F(t)}{\rho(t)}\right\|^2_{L^2(\dom)}\dt\right).
\end{align*}
The fact that $\rho_0$ is a non-increasing, deterministic function, implies 
\begin{align}\label{eq:norm_hk_rho}
\esp&\left(\int_{T_{k+1}}^{T_{k+2}}\!\!\!\int_{\dom_0}\left|\frac{h_{k+1}(t,x)}{\rho_0(t)}\right|^2\dx\dt\right) \leq C_T \esp\left(\int_{T_k}^{T_{k+1}}\left\|\frac{F(t)}{\rho(t)}\right\|^2_{L^2(\dom)}\dt\right).
\end{align}

Let $n\in\mathbb \N^*$. From \eqref{eq:norm_hk_rho}, we have
\begin{align} \label{eq:sum_hk} 
\esp&\left(\int_{T_1}^{T}\!\int_{\dom_0}\sum_{k=0}^{n}\mathbf{1}_{[T_{k+1},T_{k+2})}(t)\left|\frac{h_{k+1}}{\rho_0}\right|^2\dx\dt\right) \leq C_T\esp\left(\int_{0}^{T}\sum_{k=0}^n\mathbf{1}_{[T_{k},T_{k+1})}(t)\left\|\frac{F(t)}{\rho(t)}\right\|^2_{L^2(\dom)}\dt\right).
\end{align}
From \eqref{eq:cost_hk} at $k=0$, recalling that $(a_0,b_0)=(y_0,z_0)$ and using the properties of $\rho_0$, we get
\begin{equation}\label{eq:est_h0}
\esp\left(\int_{0}^{T_1}\!\!\!\int_{\dom_0}\left|\frac{h_0}{\rho_0}\right|^2\dx\dt\right)\leq \frac{\gamma^2\left(T_1\right)}{\rho_0^2(T_1)}\esp\left(\|y_0\|^2_{L^2(\dom)}+\|z_0\|^2_{L^2(\dom)}\right).
\end{equation}
Putting together \eqref{eq:sum_hk} and \eqref{eq:est_h0} yields the existence of a constant $C_T>0$ independent of $n$ such that
\begin{align*} 
\esp&\left(\int_{0}^{T_1}\!\!\!\int_{\dom_0}\left|\frac{h_0}{\rho_0}\right|^2\dx\dt\right)+\esp\left(\int_{T_1}^{T}\!\int_{\dom_0}\sum_{k=0}^{n}\mathbf{1}_{[T_{k+1},T_{k+2})}(t)\left|\frac{h_{k+1}}{\rho_0}\right|^2\dx\dt\right) \\
&\leq C_T\esp\left(\|y_0\|^2_{L^2(\dom)}+\|z_0\|^2_{L^2(\dom)}+\int_{0}^{T}\sum_{k=0}^n\mathbf{1}_{[T_{k},T_{k+1})}(t)\left\|\frac{F(t)}{\rho(t)}\right\|^2_{L^2(\dom)}\dt\right).
\end{align*}
Finally, we set $h:=\sum_{k=0}^\infty h_k$ thus, using Lebesgue's convergence theorem, we obtain
\begin{align}\label{est:control_rho_0} 
\esp&\left(\int_{0}^{T}\!\!\!\int_{\dom_0}\left|\frac{h}{\rho_0}\right|^2\dx\dt\right) \leq C_T\esp\left(\|y_0\|^2_{L^2(\dom)}+\|z_0\|^2_{L^2(\dom)}+\int_{0}^{T} \left\|\frac{F(t)}{\rho(t)}\right\|^2_{L^2(\dom)}\right).
\end{align}

Applying It\^{o}'s rule to $y:=y_1+y_2$ and $z:=z_1+z_2$ for $t\in[T_k,T_{k+1})$, we get
\begin{align}
\label{eq:y}
\begin{cases}
\d y+(y_{xxxx}+F)\dt=\chi_{\dom_0}h_k\,\dt+(b_1y+ b_2 z)\, \d W(t) &\text{in } \dom \times (T_k,T_{k+1}), \\
\d z- z_{xx}\,\dt=y\,\dt+b_3z\, \d W(t) &\text{in } \dom \times (T_k,T_{k+1}), \\
y=y_{xx}=0 &\text{on }\{0,1\}\times(0,T), \\
z=0 &\text{on }\{0,1\}\times(0,T), \\
y(T_k)=a_k, \quad z(T_k)=b_k &\text{in } \dom. 
\end{cases}
\end{align}
Note that by construction $(y,z)$ is continuous at $T_k$ a.s., for all $k\geq 0$, therefore by using \eqref{eq:y}, $(y,z)$ is a solution to \eqref{eq:heat_source}.

Moreover, applying \Cref{prop:app_well_2} to \eqref{eq:y} for $k\geq 1$, we have
\begin{align*}
\esp&\left(\sup_{T_k\leq t\leq T_{k+1}} \|y(t)\|_{L^2(\dom)}^2+\sup_{T_k\leq t\leq T_{k+1}} \|z(t)\|_{L^2(\dom)}^2\right) \\ 
&\leq C_T\esp \left(\|a_k\|^2_{L^2(\dom)}+ \|b_k\|^2_{L^2(\dom)} +\int _{T_{k}}^{T_{k+1}}\left[ \|\chi_{\dom_0}h_{k}(t)\|^2_{L^2(\dom)}+\|F(t)\|^2_{L^2(\dom)}\right]\dt\right).
\end{align*}
Then using inequalities \eqref{eq:cost_hk} and \eqref{eq:ener_y1} to estimate in the above equation yields
\begin{align*}
\esp&\left(\sup_{T_k\leq t\leq T_{k+1}} \|y(t)\|_{L^2(\dom)}^2+\sup_{T_k\leq t\leq T_{k+1}} \|z(t)\|_{L^2(\dom)}^2\right) \\ 
& \leq C_T \gamma^2(T_{k+1}-T_k) \esp \left(\int _{T_{k-1}}^{T_{k+1}} \|F(t)\|^2_{L^2(\dom)}\dt\right).
\end{align*}

The identity \eqref{eq:relationweights} allows us to get
\begin{align*}
&\esp\left(\sup_{T_k\leq t\leq T_{k+1}} \|y(t)\|_{L^2(\dom)}^2+\sup_{T_k\leq t\leq T_{k+1}} \|z(t)\|_{L^2(\dom)}^2\right) \leq C_T\rho_0^2(T_{k+1}) \esp \left(\int _{T_{k-1}}^{T_{k+1}} \left\| \frac{F(t)}{\rho(t)}\right\|^2_{L^2(\dom)}\dt\right),
\end{align*}
so by using that $\rho_0$ is non-increasing, we have
\begin{align}
\esp\left(\sup_{T_k\leq t\leq T_{k+1}} \left\|\frac{y(t)}{\rho_0(t)}\right\|_{L^2(\dom)}^2+\sup_{T_k\leq t\leq T_{k+1}} \left\|\frac{z(t)}{\rho_0(t)}\right\|_{L^2(\dom)}^2\right) \leq C_T \esp \left(\int _{T_{k-1}}^{T_{k+1}} \left\| \frac{F(t)}{\rho(t)}\right\|^2_{L^2(\dom)}\dt\right). \label{eq:sup_y_rho_0}
\end{align}
Moreover, arguing as before, it is not difficult to establish that 
\begin{align}
\notag
\esp &  \left(\sup_{0\leq t\leq T_1}\left\|\frac{y(t)}{\rho_0(t)}\right\|^2_{L^2(\dom)}+\sup_{0\leq t\leq T_1}\left\|\frac{z(t)}{\rho_0(t)}\right\|^2_{L^2(\dom)}\right) \\ \label{eq:estimateynear0}
&\leq C_T \left(\|y_0\|^2_{L^2(\dom)}+\|z_0\|^2_{L^2(\dom)}+\int_{0}^{T_1}  \left\| \frac{F(t)}{\rho(t)}\right\|^2_{L^2(\dom)}\dt \right).
\end{align}

Let $n\in\mathbb N^*$. From inequalities \eqref{eq:sup_y_rho_0} and \eqref{eq:estimateynear0}, we have
\begin{align}\notag 
\esp & \left(\sup_{0\leq t\leq T_1}\left\|\frac{y(t)}{\rho_0(t)}\right\|^2_{L^2(\dom)}+\sup_{0\leq t\leq T_1}\left\|\frac{z(t)}{\rho_0(t)}\right\|_{L^2(\dom)}^2\right) \\ \notag
&+\sum_{k=1}^{n}\esp\left(\sup_{T_{k}\leq t\leq T_{k+1}}\left\|\frac{y(t)}{\rho(t)}\right\|^2_{L^2(\dom)}+\sup_{T_{k}\leq t\leq T_{k+1}}\left\|\frac{z(t)}{\rho(t)}\right\|^2_{L^2(\dom)}\right) \\ \label{eq:sup_y_n}
&\leq C_T\esp\left(\|y_0\|^2_{L^2(\dom)}+\|z_0\|^2_{L^2(\dom)}+\sum_{k=1}^{n}\int_{0}^{T}\mathbf{1}_{[T_{k-1},T_{k+1})}  \left\| \frac{F(t)}{\rho(t)}\right\|^2_{L^2(\dom)} \right)
\end{align}
where $C_T>0$ is uniform with respect to $n$. Letting $n\to \infty$ in \eqref{eq:sup_y_n} yields
\begin{align} \notag
\esp &  \left(\sup_{0\leq t\leq T }\left\|\frac{y(t)}{\rho_0(t)}\right\|^2_{L^2(\dom)}+\sup_{0\leq t\leq T }\left\|\frac{z(t)}{\rho_0(t)}\right\|^2_{L^2(\dom)}\right) \\
\label{eq:sup_y_limit_aux}
&\leq  C_T\esp\left(\|y_0\|^2_{L^2(\dom)}+\|z_0\|^2_{L^2(\dom)}+\int_{0}^{T}  \left\| \frac{F(t)}{\rho(t)}\right\|^2_{L^2(\dom)}\dt \right).
\end{align}
Finally, combining \eqref{est:control_rho_0} and \eqref{eq:sup_y_limit_aux} gives the desired result. This concludes the proof.
%

\section{Well-posedness results}

We devote this section to present some general results and make some comments about the well-posedness of systems \eqref{backward_KS-H}, \eqref{eq:adjoint_KSS} and systems \eqref{eq:heat_source}, \eqref{eq:system_back_tau}. For conciseness, we assume that the coefficients $d_i$ and $b_i$ have the same regularity as in \Cref{main_teo} and \Cref{th:mainresult1,th:mainresult2}, respectively.

\subsection{Clamped boundary conditions}

We present here the well-posedness results for the case of \textit{clamped} boundary conditions for the fourth-order operator, i.e., $y=y_x=0$ on $\Sigma$. We begin with the forward system. 

\begin{prop}\label{prop:app_well_1}
Assume that $u_0,v_0\in L^2(\Omega,\mathcal F_0;L^2(\dom))$ and $f_i,g_i\in L^2_{\mathcal F}(0,T;L^2(\dom))$, $i=1,2$. Then, the system
\begin{equation}\label{app:forward_system}
\begin{cases}
\d u+(\gamma u_{xxxx}+u_{xxx}+u_{xx})\dt=(f_1+v_x )dt+(g_1+d_1 u)\, \d W(t) &\text{in }Q, \\
\d v-\Gamma v_{xx}\dt=(f_2+v_x+u_x)\dt+(g_2+d_2u+d_3 v) \d W(t) &\text{in }Q, \\
u=u_x=0 &\text{on }\Sigma, \\
v=0 &\text{on }\Sigma, \\
u(0)=u_0, \quad v(0)=v_0 &\text{in } \dom,
\end{cases}
\end{equation}
has a unique solution $(u,v)\in L^2_{\mathcal F}(0,T;H_0^2(\dom)\times H_0^{1}(\dom)) \bigcap L^2\left(\Omega; C([0,T];L^2(\dom)^2)\right)$. Moreover, there exists some $C>0$ only depending on $T$, $\Gamma$, $\gamma$ and $d_i$, $i=1,2,3$, such that
\begin{align*}
\esp\left(\sup_{0\leq t \leq T} \|u\|_{L^2(\dom)}^2+ \sup_{0\leq t \leq T} \|v\|^2_{L^2(\dom)}\right)+\esp\left(\int_{0}^{T}\|u(t)\|^2_{H_0^2(\dom)}\dt+\int_{0}^{T}\|v(t)\|^2_{H_0^1(\dom)}\dt\right) \\
\leq C\esp \left(\|u_0\|^2_{L^2(\dom)}+\|v_0\|_{L^2(\dom)}^2+\sum_{i=1}^{2}\left\{\int_{0}^{T}\|f_i(t)\|^2_{L^2(\dom)}\dt
+\int_{0}^{T}\|g_i(t)\|^2_{L^2(\dom)}\dt
\right\}\right).
\end{align*}
\end{prop}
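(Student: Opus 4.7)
The plan is to treat \eqref{app:forward_system} as a coupled linear stochastic evolution system in the pivot space $H := L^2(\dom)\times L^2(\dom)$ with domain $V := H_0^2(\dom)\times H_0^1(\dom)$, and to produce solutions via a Galerkin scheme. First I would introduce an orthonormal basis $\{e_k\}$ of $L^2(\dom)$ consisting of eigenfunctions of the clamped bi-harmonic operator (and a corresponding basis $\{\varphi_k\}$ for $H_0^1(\dom)$ given by eigenfunctions of $-\partial_{xx}$), project the system onto the $N$-dimensional subspaces, and obtain a finite-dimensional linear SDE. Standard SDE theory yields a unique adapted solution $(u^N, v^N)$ continuous in time.

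The heart of the argument is a uniform a priori estimate. I would apply It\^o's formula to $\|u^N(t)\|_{L^2}^2$ and $\|v^N(t)\|_{L^2}^2$, integrate by parts to turn the leading $\gamma u_{xxxx}$ and $-\Gamma v_{xx}$ terms into the coercive quantities $\gamma\|u^N_{xx}\|_{L^2}^2$ and $\Gamma\|v^N_x\|_{L^2}^2$, and collect the remaining lower-order drift and diffusion contributions on the right-hand side. The coupling terms $v_x$, $u_x$ and the skew-like contributions from $u_{xxx}$ and $v_x$ are first-order and can be absorbed using Young's inequality: for instance
\begin{equation*}
\left|\esp\int_0^t\!\!\int_\dom u^N u^N_{xxx}\,\dx\d s\right| \leq \tfrac{\gamma}{4}\esp\int_0^t \|u^N_{xx}\|_{L^2}^2\d s + C\esp\int_0^t\|u^N\|_{L^2}^2\d s,
\end{equation*}
after one integration by parts. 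The stochastic integrals contribute, via It\^o's isometry, terms of the form $\esp\int_0^t \|d_1 u^N + g_1\|_{L^2}^2\d s$ and similarly for the $v^N$-equation; these are controlled by $\|d_i\|_{L^\infty}$. Applying Gronwall's inequality yields the desired bound in $L^2_\fil(0,T;V)\cap L^2(\Omega;L^\infty(0,T;H))$, uniformly in $N$.

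To upgrade the $L^\infty_t$ bound in expectation to the supremum inside the expectation, I would use the Burkholder-Davis-Gundy inequality on the martingale parts arising from the It\^o's formula computation, together with Young's inequality to reabsorb a fraction of $\esp\sup_{[0,T]}\|(u^N,v^N)\|_H^2$ into the left-hand side. This produces the $L^2(\Omega;C([0,T];L^2(\dom)^2))$-type estimate stated in the proposition (continuity of $t\mapsto (u(t),v(t))$ in $H$ follows from the stochastic integral being a continuous martingale plus a Bochner-integrable drift in the dual $V^*$).

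Passage to the limit is standard: weak-$\ast$ compactness in $L^2_\fil(0,T;V)$ together with weak compactness in $L^2(\Omega\times[0,T];H)$ gives a candidate $(u,v)$; linearity of the equation lets one pass to the limit in each term and identify the limit as a solution in the appropriate variational sense. Uniqueness is obtained by applying the same energy-plus-Gronwall argument to the difference of two solutions with zero data, which, by linearity, must vanish. The main obstacle I anticipate is purely bookkeeping: carefully tracking the first-order coupling terms ($v_x$ in the $u$-equation, $u_x$ in the $v$-equation, and the skew term $u_{xxx}$) so that after one or two integrations by parts they are absorbed by the coercive quadratic terms $\gamma\|u_{xx}\|^2$ and $\Gamma\|v_x\|^2$, rather than by themselves. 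Once this algebra is in place, the rest is routine.
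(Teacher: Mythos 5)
Your proposal is correct, but it takes a more self-contained route than the paper. The paper does not run a Galerkin scheme for the coupled system: it simply invokes the known well-posedness results for each scalar equation separately (the stochastic heat equation as in Krylov, or Proposition 2.1 of Zhou, and the stochastic fourth-order equation with clamped boundary conditions as in Gao's work) and then observes that, since the coupling terms $v_x$, $u_x$ and the terms $u_{xxx}$, $u_{xx}$ are of lower order relative to the leading operators, the combined energy estimate for the system follows by integration by parts — essentially the same algebra you describe, but layered on top of the cited scalar results rather than rebuilt from scratch. Your approach redoes the full machinery (finite-dimensional projections on eigenbases adapted to each boundary condition, It\^o's formula for the $L^2$-norms, Young and interpolation to absorb the first-order couplings into $\gamma\|u_{xx}\|^2$ and $\Gamma\|v_x\|^2$ after summing the two identities, Burkholder--Davis--Gundy for the supremum inside the expectation, weak compactness and linearity for the limit, Gronwall for uniqueness), which buys a proof that does not depend on external references and treats the coupled system in one pass; the paper's route is shorter but delegates the core estimates. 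Two small points to make explicit if you write this up: the bound on $\int u\,u_{xxx}$ needs, after the integration by parts giving $-\int u_x u_{xx}$, the interpolation $\|u_x\|_{L^2}^2\leq \epsilon\|u_{xx}\|_{L^2}^2+C_\epsilon\|u\|_{L^2}^2$ (valid here since $u$ vanishes at the boundary), and in the Galerkin limit you should note that adaptedness is preserved under weak limits because the subspace of adapted processes is closed and convex, hence weakly closed.
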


The proof of this result can be done by applying well-posedness results to each equation in the system and some integration by parts. Seeing individually, the existence and uniqueness of the solutions to the stochastic parabolic equation have been studied in \cite{krylov} (see \cite[Proposition 2.1]{zhou92} for a more accessible reference), while the analysis of the fourth-order PDE has been made in \cite{Gao15}. 

We present a general result for the backward system \eqref{backward_KS-H}. 

\begin{prop}\label{prop:app_well_2}
Assume that $y_T,z_T\in L^2(\Omega,\mathcal F_T;L^2(\dom))$ and $F_i\in L^2_{\mathcal F}(0,T;L^2(\dom))$, $i=1,2$. Then, the system
\begin{equation}\label{app:backward_system}
\begin{cases}
\d y-(\gamma y_{xxxx}-y_{xxx}+y_{xx})\dt=(z_x-d_1 Y-d_2 Z+F_1)\dt+ Y\d W(t) &\text{in }Q, \\
\d z+\Gamma z_{xx}\dt=(z_x+y_x-d_3Z+F_2)\dt+Z\d W(t) &\text{in }Q, \\
y=y_x=0 &\text{on }\Sigma, \\
z=0 &\text{on }\Sigma, \\
y(T)=y_T, \quad z(T)=z_T &\text{in } \dom,
\end{cases}
\end{equation}
has a unique solution $(y,z,Y,Z)\in \left[L^2_{\mathcal F}(0,T;H_0^2(\dom)\times H_0^{1}(\dom)) \bigcap L^2\left(\Omega; C([0,T];L^2(\dom)^2)\right)\right]\times L^2_{\mathcal F}(0,T;L^2(\dom))^2$. Moreover, there exists some $C>0$ only depending on $T$, $\Gamma$, $\gamma$ and $d_i$, $i=1,2,3$, such that
\begin{align*}
\esp&\left(\sup_{0\leq t \leq T}\|y\|_{L^2(\dom)}^2+\sup_{0\leq t \leq T}\|z\|^2_{L^2(\dom)}\right)+\esp\left(\int_{0}^{T}\|y(t)\|^2_{H_0^2(\dom)}\dt+\int_{0}^{T}\|z(t)\|^2_{H_0^1(\dom)}\dt\right) \\
&+\esp \left(\int_{0}^T \|Y(t)\|_{L^2(\dom)}^2\dt+\int_{0}^T \|Z(t)\|_{L^2(\dom)}^2\dt\right) \\
&\quad \leq C\esp \left(\|y_T\|^2_{L^2(\dom)}+\|z_T\|_{L^2(\dom)}^2+\int_{0}^{T}\|F_1(t)\|^2_{L^2(\dom)}\dt
+\int_{0}^{T}\|F_2(t)\|^2_{L^2(\dom)}\dt
\right).
\end{align*}
\end{prop}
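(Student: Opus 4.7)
\textbf{Proof proposal for \Cref{prop:app_well_2}.} The plan is to establish existence and uniqueness by a fixed-point argument built on the scalar well-posedness theories for the backward fourth-order stochastic equation and for the backward stochastic heat equation, and then to derive the stated energy estimate by an It\^o computation combined with a backward Gronwall inequality.

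First, I would freeze the couplings. Given $(\tilde y,\tilde z) \in L^2_{\mathcal F}(0,T;H^2_0(\dom)\times H^1_0(\dom))$, the terms $\tilde z_x$ and $\tilde y_x$ sit in $L^2_{\mathcal F}(0,T;L^2(\dom))$ and can be absorbed into the right-hand sides. The two decoupled BSDEs are then:
\begin{equation*}
\begin{cases}
\d y-(\gamma y_{xxxx}-y_{xxx}+y_{xx})\dt=(\tilde z_x-d_1Y-d_2Z+F_1)\dt+Y\d W(t),\\
y=y_x=0 \text{ on }\Sigma,\quad y(T)=y_T,
\end{cases}
\end{equation*}
which is well-posed by the results for linear fourth-order BSDEs in \cite{Gao15}, and
\begin{equation*}
\begin{cases}
\d z+\Gamma z_{xx}\dt=(z_x+\tilde y_x-d_3Z+F_2)\dt+Z\d W(t),\\
z=0 \text{ on }\Sigma,\quad z(T)=z_T,
\end{cases}
\end{equation*}
which is well-posed by the classical theory of linear backward stochastic heat equations (see e.g.\ \cite{TZ09} or the references of \Cref{prop:app_well_1}). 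Each equation yields a quadruple $(y,Y)$ or $(z,Z)$ with the expected maximal-regularity bounds controlled by the $L^2_{\mathcal F}(0,T;L^2(\dom))$-norms of the inputs.

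Next, I would set up the fixed-point map $\Phi:(\tilde y,\tilde z)\mapsto (y,z)$ on the Banach space
\[
\mathcal V_\tau:=L^2_{\mathcal F}(0,\tau;H^2_0(\dom))\times L^2_{\mathcal F}(0,\tau;H^1_0(\dom)), \qquad \tau\in(0,T].
\]
Subtracting two instances of the decoupled systems and using the linear energy estimates from the step above, together with the boundedness of $d_1,d_2,d_3$, yields a Lipschitz constant of the form $C\sqrt{\tau}$ for $\Phi$ on $\mathcal V_\tau$. Hence $\Phi$ is a strict contraction on a short interval, and a standard concatenation argument over $[T-\tau,T],[T-2\tau,T-\tau],\ldots$ produces a unique solution $(y,z,Y,Z)$ on the full interval $[0,T]$, belonging to the claimed spaces and with continuous trajectories in $L^2(\dom)$ a.s.

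Finally, for the a priori estimate, I would apply It\^o's formula to $\|y(t)\|^2_{L^2(\dom)}$ and $\|z(t)\|^2_{L^2(\dom)}$, integrate from $t$ to $T$, and integrate by parts using the clamped boundary conditions $y=y_x=0$ and $z=0$ on $\Sigma$. The key identities are
\[
\int_\dom 2y(\gamma y_{xxxx}-y_{xxx}+y_{xx})\,\dx=2\gamma\|y_{xx}\|^2_{L^2(\dom)}-2\|y_x\|^2_{L^2(\dom)},
\]
where the $y_{xxx}$ contribution vanishes thanks to $y_x|_{\partial\dom}=0$, and $\int_\dom 2z(-\Gamma z_{xx})\,\dx=2\Gamma\|z_x\|^2_{L^2(\dom)}$. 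After summing, the cross terms $\int_\dom z_x y\,\dx$ and $\int_\dom y_x z\,\dx$ are controlled via Cauchy--Schwarz and Young inequalities (splitting any $H^2_0$-cost against the positive $\|y_{xx}\|^2$ term and trading the remainder against $\|y\|^2+\|z\|^2$), while the terms with $d_iY,d_iZ$ are absorbed using $\varepsilon$-Young against the It\^o correction $\|Y\|^2+\|Z\|^2$. A backward Gronwall inequality then yields the supremum-in-$t$ bound on $\mathbb E\,\|y\|^2$ and $\mathbb E\,\|z\|^2$, and the bound on $\mathbb E\int_0^T\|Y\|^2+\|Z\|^2$ comes for free from the same computation. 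The $L^2(\Omega;C([0,T];L^2))$ bound then follows by the Burkholder--Davis--Gundy inequality applied to the martingale terms $\int_t^T\int_\dom 2yY\,\dx\,\d W$ and $\int_t^T\int_\dom 2zZ\,\dx\,\d W$.

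The main obstacle is the first-order coupling: $\tilde z_x$ and $\tilde y_x$ are not zero-order terms and require the fixed-point topology to control one derivative of $z$ and two of $y$; this is why the maximal-regularity version of the scalar BSDE estimates (not merely the energy version) must be invoked in Step 1. Once this is accepted, the contraction-on-small-interval scheme and the It\^o/Gronwall computation are routine.
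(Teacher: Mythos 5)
Your overall route coincides with what the paper intends: the paper's proof is only a two-line reduction, decoupling the system and invoking the scalar backward well-posedness results (\cite[Theorem 3.1]{zhou92} for the heat part, \cite[Proposition 2.4]{Gao15} for the fourth-order part) ``plus some integration by parts'', and your It\^o/energy computation is exactly that missing integration-by-parts step. That part of your argument is correct: the identities under the clamped conditions are right, the terms $d_1Y, d_2Z, d_3Z$ are absorbed by $\varepsilon$-Young against the It\^o corrections $\|Y\|^2,\|Z\|^2$, and backward Gronwall plus Burkholder--Davis--Gundy gives the stated bound. (In fact the first-order cross terms need no Young at all, since $\int_{\dom}\bigl(y z_x + z y_x\bigr)\dx=\int_{\dom}(yz)_x\dx=0$ under $y=z=0$ on $\Sigma$.)

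The gap is in your fixed-point step, and it concerns precisely the feature that distinguishes this backward system: the fourth-order equation is coupled to the heat equation not only through $z_x$ but also through the heat equation's martingale component via the drift term $-d_2Z$. In your ``decoupled'' fourth-order BSDE this $Z$ is neither an unknown of that scalar equation (whose unknowns are only $(y,Y)$) nor frozen, and your fixed-point space $\mathcal V_\tau=L^2_{\mathcal F}(0,\tau;H^2_0(\dom))\times L^2_{\mathcal F}(0,\tau;H^1_0(\dom))$ has no $Z$-component, so the map $\Phi$ is not actually defined as written. The repair is either to freeze $Z=\tilde Z$ and run the contraction on triples $(\tilde y,\tilde z,\tilde Z)$ with the extra factor $L^2_{\mathcal F}(0,\tau;L^2(\dom))$, or to use a triangular scheme (solve the heat BSDE first with source $\tilde y_x$, then feed its $z_x$ and $Z$ into the fourth-order BSDE); either choice must be stated and enters the contraction estimate. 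Relatedly, the asserted Lipschitz constant $C\sqrt{\tau}$ does not follow from ``the linear energy estimates from the step above'': those estimates bound the solution by $\bigl(\mathbb E\int_0^\tau\|f\|^2_{L^2(\dom)}\dt\bigr)^{1/2}$ with a constant that stays of order one as $\tau\to0$, so freezing $\tilde z_x$, $\tilde y_x$ and $d_2\tilde Z$ only gives a Lipschitz constant $C$. To recover smallness you need the refined form of the energy estimate in which the drift source enters through its $L^1(0,\tau;L^2(\dom))$-norm (then Cauchy--Schwarz in time produces the $\sqrt{\tau}$, for the $d_2\tilde Z$ term as well), or alternatively a weighted-norm (Pardoux--Peng type) contraction on the whole interval, or a weighted product norm exploiting the asymmetry of the coupling. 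Any of these closes the argument, but one of them has to be carried out; as stated, the contraction does not follow from the estimates you cite.
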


The proof is totally analogous to the one of \Cref{prop:app_well_1}. We just need to change the existence and uniqueness result for each individual equation, that is, we need to consider \cite[Theorem 3.1]{zhou92} for the parabolic equation and \cite[Proposition 2.4]{Gao15} for the fourth-order one.

\subsection{Hinged boundary conditions}

Here, we present existence and uniqueness results for the case of systems with \textit{hinged} boundary conditions, that is, when we impose $y=y_{xx}=0$ on $\Sigma$ to the fourth-order operator. 

In what follows, we assume that $a_{i}\in L^\infty_{\fil}(0,T;\R)$, $i=1,\dots,4$ and that $T\in(0,1)$. We have the following result for the forward system.

\begin{prop}\label{prop:app_well_2}
\begin{enumerate}
\item[a)] Assume that $y_0,z_0\in L^2(\Omega,\mathcal F_0;L^2(\dom))$ and $f_i,g_i\in L^2_{\mathcal F}(0,T;L^2(\dom))$, $i=1,2$. Then, the system
\begin{equation}\label{app:forward_system_simp}
\begin{cases}
\d y+y_{xxxx}\dt=(f_1+a_1 y+ a_2 z)dt+(g_1+b_1 y+b_2 z)\, \d W(t) &\text{in }Q, \\
\d z- z_{xx}\dt=(f_2+a_3y+a_4z)\dt+(g_2+b_3 z) \d W(t) &\text{in }Q, \\
y=y_{xx}=0 &\text{on }\Sigma, \\
z=0 &\text{on }\Sigma, \\
y(0)=y_0, \quad z(0)=z_0 &\text{in } \dom,
\end{cases}
\end{equation}
has a unique solution $(y,z)\in L^2_{\mathcal F}(0,T;H^2(\dom)\times H_0^{1}(\dom)) \bigcap L^2\left(\Omega; C([0,T];L^2(\dom)^2)\right)$. Moreover, there exists some $C>0$ only depending on  $a_j$ and $b_i$, such that
\begin{align}\notag
\esp\left(\sup_{0\leq t \leq T} \|y(t)\|_{L^2(\dom)}^2+\sup_{0\leq t \leq T}\|z(t)\|^2_{L^2(\dom)}\right)+\esp\left(\int_{0}^{T}\|y(t)\|^2_{H^2(\dom)}\dt+\int_{0}^{T}\|z(t)\|^2_{H_0^1(\dom)}\dt\right) \\ \label{eq:app_reg_a}
\leq C\esp \left(\|y_0\|^2_{L^2(\dom)}+\|z_0\|_{L^2(\dom)}^2+\sum_{i=1}^{2}\left\{\int_{0}^{T}\|f_i(t)\|^2_{L^2(\dom)}\dt
+\int_{0}^{T}\|g_i(t)\|^2_{L^2(\dom)}\dt
\right\}\right).
\end{align}
\end{enumerate}
\item[b)] Assume that $(y_0,z_0)\in L^2(\Omega,\mathcal F_0;H^2(\dom)\cap H_0^1(\dom))\times L^2(\Omega,\mathcal F_0;H_0^1(\dom))$ and $f_i\in L^2_{\mathcal F}(0,T;L^2(\dom))$, $g_i\in L^2_{\mathcal F}(0,T; H^{3-i}(\dom))$, $i=1,2$. Then, the system \eqref{app:forward_system_simp}
has a unique solution $$(y,z)\in L^2_{\mathcal F}(0,T;H^4(\dom)\times H^2(\dom)) \bigcap L^2\left(\Omega; C([0,T];H^2(\dom)\times H_0^1(\dom))\right).$$
Moreover, there exists some $C>0$ only depending on $a_j$ and $b_i$, such that
\begin{align*}
\esp\left(\sup_{0\leq t \leq T} \|y(t)\|_{H^2(\dom)}^2+\sup_{0\leq t \leq T}\|z(t)\|^2_{H_0^1(\dom)}\right)+\esp\left(\int_{0}^{T}\|y(t)\|^2_{H^4(\dom)}\dt+\int_{0}^{T}\|z(t)\|^2_{H^2(\dom)}\dt\right) \\
\leq C\esp \left(\|y_0\|^2_{H^2(\dom)\cap H_0^1(\dom)}+\|z_0\|_{H^1_0(\dom)}^2+\sum_{i=1}^{2}\left\{\int_{0}^{T}\|f_i(t)\|^2_{L^2(\dom)}\dt
+\int_{0}^{T}\|g_i(t)\|^2_{H^{3-i}(\dom)}\dt
\right\}\right).
\end{align*}
\end{prop}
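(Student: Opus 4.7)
The plan is to treat both parts by a Galerkin approximation built on the common eigenbasis $\{\varphi_k\}_{k\geq 1}$ with $\varphi_k(x)=\sqrt{2}\sin(k\pi x)$, which diagonalises both $-\partial_{xx}$ with Dirichlet conditions and $\partial_{x}^{4}$ with hinged conditions (corresponding to eigenvalues $(k\pi)^2$ and $(k\pi)^4$, respectively). Taking $y^N,z^N\in\mathrm{span}\{\varphi_1,\dots,\varphi_N\}$, the projected system is a coupled finite-dimensional It\^o SDE with globally Lipschitz coefficients, and therefore admits a unique strong solution by classical theory. The remainder is then a matter of deriving uniform-in-$N$ a priori estimates and passing to the limit; uniqueness in the limit will follow by applying the same estimates to the difference of two candidate solutions.

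For part (a), the core step is to apply It\^o's formula to $\|y^N(t)\|_{L^2(\dom)}^2+\|z^N(t)\|_{L^2(\dom)}^2$. Thanks to the choice of basis, the integrations by parts of $\int_{\dom} y^N y^N_{xxxx}\dx$ and $-\int_{\dom} z^N z^N_{xx}\dx$ produce exactly $\|y^N_{xx}\|_{L^2(\dom)}^2$ and $\|z^N_x\|_{L^2(\dom)}^2$ with no boundary remainders, which delivers the coercive dissipation needed to control the $H^2(\dom)\times H_0^1(\dom)$ norm after time integration. The coupling terms $a_2 z^N$, $a_3 y^N$, $b_1 y^N$, $b_2 z^N$ and the forcings $f_i,g_i$ are handled by Cauchy--Schwarz, Young, and the $L^\infty$-bounds on the coefficients, while the quadratic variation contributions are closed by Gronwall's inequality. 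The supremum in time is then recovered by applying the Burkholder--Davis--Gundy inequality to the martingale part, yielding \eqref{eq:app_reg_a} uniformly in $N$. This strategy parallels the well-posedness arguments already invoked in the text, namely \cite{Gao15} for the fourth-order equation and \cite{zhou92} for the heat equation.

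For part (b), the same approximation is used, but It\^o's formula is now applied to the higher-order functional $\|y^N_{xx}(t)\|_{L^2(\dom)}^2+\|z^N_x(t)\|_{L^2(\dom)}^2$, which -- by the hinged and Dirichlet boundary conditions -- is equivalent to the norm on $(H^2(\dom)\cap H_0^1(\dom))\times H_0^1(\dom)$. The key observation is that each $\varphi_k$ satisfies $\varphi_k=(\varphi_k)_{xx}=0$ on $\partial\dom$, so the same property is inherited by $y^N$ and $y^N_{xx}$; integrating $\int_{\dom} y^N_{xx} y^N_{xxxxxx}\dx$ by parts twice therefore leaves no boundary remainders and yields the coercive term $\|y^N_{xxxx}\|_{L^2(\dom)}^2$, and analogously one obtains $\|z^N_{xx}\|_{L^2(\dom)}^2$ from the $z$-equation. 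The forcings $f_i\in L^2$, $g_1\in H^2$, $g_2\in H^1$, the lower-order linear drift contributions, and the quadratic variations of $g_1+b_1y^N+b_2z^N$ in $H^2$ and $g_2+b_3z^N$ in $H^1$ are then estimated via Cauchy--Schwarz, Young, and the uniform bound already obtained in part (a), and absorbed into the dissipation. Gronwall and Burkholder--Davis--Gundy close the estimate, and weak compactness in the appropriate Bochner spaces delivers the limit $(y,z)$ with the required regularity.

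The main technical obstacle is the absorption step in part (b): one has to verify that the cross terms arising in the It\^o correction -- most notably $\|(b_2 z^N)_{xx}\|_{L^2(\dom)}^2$, which couples the $y$-estimate at order two to $z^N_{xx}$ -- can actually be dominated by the coercive $\|z^N_{xx}\|_{L^2(\dom)}^2$ coming from the $z$-equation. Since $b_2$ depends only on time and is in $L^\infty_{\fil}(0,T;\R)$, this is achieved by weighting the $y$- and $z$-estimates with a suitable small constant before adding them, so that the dissipation on the $z$-side dominates the stochastic cross term, and the remaining lower-order contributions are handled by Gronwall's inequality together with $T\in(0,1)$. Once the uniform estimates are secured, continuity in time with values in $H^2(\dom)\cap H_0^1(\dom)$ (resp.\ $H_0^1(\dom)$) is recovered from It\^o's formula applied in the limit together with a density argument.
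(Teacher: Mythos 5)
Your argument is correct in substance and relies on the same machinery the paper points to — a Galerkin scheme in the sine basis $\{\sqrt{2}\sin(k\pi x)\}_{k\ge1}$, which diagonalises both $-\partial_{xx}$ with Dirichlet conditions and $\partial_x^4$ with hinged conditions, followed by It\^o energy identities, Burkholder--Davis--Gundy and Gronwall — but it is organised differently. The paper does not run a coupled Galerkin scheme: it treats the equations separately, citing the single-equation results (the fourth-order well-posedness of \cite{Gao15} adapted to hinged boundary conditions by changing the eigenbasis, the classical parabolic estimates of \cite{zhou92}, and for part (b) the higher-regularity energy estimate as in \cite{gao2017}), and then combines them. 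In part (b) this amounts to a sequential bootstrap: part (a) gives $y,z\in L^2_{\fil}(0,T;H^2\times H^1_0)$; maximal regularity for the heat equation with drift $f_2+a_3y+a_4z$ and diffusion $g_2+b_3z$ then gives $z\in L^2_{\fil}(0,T;H^2(\dom))\cap L^2(\Omega;C([0,T];H^1_0(\dom)))$; and this is finally fed into the $H^2$--$H^4$ estimate for the fourth-order equation as a known right-hand side. Your monolithic treatment is what creates the obstacle you correctly single out: the It\^o correction $\|(g_1+b_1y^N+b_2z^N)_{xx}\|_{L^2(\dom)}^2$ produces a term $\|b_2\|_\infty^2\|z^N_{xx}\|_{L^2(\dom)}^2$ that must be absorbed by the $z$-dissipation, and your weighted sum of the two estimates (with a weight depending only on $b_2$, hence admissible in the final constant) handles it; the paper's sequential route avoids any such smallness weighting. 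One caveat, common to both routes but worth stating explicitly in yours: at the $H^2$-level the Galerkin quadratic-variation terms involve $\|\partial_{xx}\Pi_N g_1\|_{L^2(\dom)}$ and $\|\partial_x\Pi_N g_2\|_{L^2(\dom)}$, and the spectral projections $\Pi_N$ are uniformly bounded in these norms only when $g_1,g_2$ vanish at the boundary (for $g_2\equiv1\in H^1(\dom)$ the relevant sum diverges), so the part (b) estimate genuinely requires $g_1\in H^2\cap H^1_0$ and $g_2\in H^1_0$; this is satisfied in the paper's application in \Cref{prop:SourceTermReg}, where the diffusion terms are built from $y$ and $z$ themselves, but you should record the hypothesis when passing to the limit in $N$.
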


The proof of this result can be obtained in two steps. For a), we can adapt \cite[Proposition 2.3]{Gao15} to get a well-posedness result for the fourth-order equation with hinged boundary conditions. Actually, we just need to change the eigenvalue problem and the basis employed for the Galerkin method in that work. Once this is done, we can put together the obtained result with classical regularity estimates (see \cite[Proposition 2.1]{zhou92}) for the heat equation to deduce \eqref{eq:app_reg_a}. Point b) comes by arguing similar to \cite[Proposition 2.1]{gao2017} to deduce an energy estimate for the fourth-order equation with more regular data.

We conclude this section by giving the following result for the corresponding backward system. 

\begin{prop}\label{prop:back_uxx} Assume that $u_T,v_T\in L^2(\Omega,\mathcal F_T; L^2(\dom))$ and $F_i\in L^2_{\fil}(0,T;L^2(\dom))$. Then, the system
\begin{equation}\label{eq:system_back_tau_app}
\begin{cases}
\d u-u_{xxxx}\dt=-(a_1 u+a_3v+F_1+b_1\ov{u})\dt+\ov{u}\,\d W(t) &\text{in } Q, \\
\d v+v_{xx}\dt=-(a_2 u + a_4 v+F_2+b_2\ov{u}+b_3\ov{v})\dt+\ov{v}\,\d W(t) &\text{in } Q, \\
u=u_{xx}=0 &\text{on }\Sigma, \\
v=0 &\text{on } \Sigma, \\
u(T)=u_T, \quad v(T)=v_T &\text{in } \dom,
\end{cases}
\end{equation}
has a unique solution 
\begin{equation*}
(u,v,\ov{u},\ov{v})\in \left[L^2_{\mathcal F}(0,T;H^2(\dom)\times H_0^{1}(\dom)) \bigcap L^2\left(\Omega; C([0,T];L^2(\dom)^2)\right)\right]\times L^2_{\mathcal F}(0,T;L^2(\dom))^2.
\end{equation*}

Moreover, there exists some $C>0$ only depending on $a_j$ and $b_i$, such that
\begin{align*}
\esp&\left(\sup_{0\leq t \leq T}\|u\|_{L^2(\dom)}^2+\sup_{0\leq t \leq T}\|v\|^2_{L^2(\dom)}\right)+\esp\left(\int_{0}^{T}\|u(t)\|^2_{H^2(\dom)}\dt+\int_{0}^{T}\|v(t)\|^2_{H_0^1(\dom)}\dt\right) \\
&+\esp \left(\int_{0}^T \|\ov{u}(t)\|_{L^2(\dom)}^2\dt+\int_{0}^T \|\ov{v}(t)\|_{L^2(\dom)}^2\dt\right) \\
&\quad \leq C\esp \left(\|u_T\|^2_{L^2(\dom)}+\|v_T\|_{L^2(\dom)}^2+\int_{0}^{T}\|F_1(t)\|^2_{L^2(\dom)}\dt
+\int_{0}^{T}\|F_2(t)\|^2_{L^2(\dom)}\dt
\right).
\end{align*}
\end{prop}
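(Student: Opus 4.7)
The plan is to prove existence and uniqueness of the coupled backward system by mirroring the strategy used for the clamped version treated just before in the appendix, adapting only the ingredient that depends on the boundary conditions of the fourth-order operator. Since the coupling in \eqref{eq:system_back_tau_app} appears exclusively through zero-order terms with $L^\infty_\fil$ coefficients, once well-posedness for each scalar equation is in hand, a standard Picard/fixed-point argument will close the system in the announced functional space.

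For the scalar backward heat equation for $v$ (with the $u$-dependent terms frozen as a source), I would invoke \cite[Theorem 3.1]{zhou92}, which directly produces the solution and the associated energy estimate. For the scalar backward fourth-order SPDE
\[
\d u - u_{xxxx}\,\dt = f\,\dt + \ov u\,\d W(t),\qquad u=u_{xx}=0 \ \text{on}\ \Sigma, \quad u(T)=u_T,
\]
I would adapt the Galerkin construction of \cite[Proposition 2.4]{Gao15}, replacing the basis used there (tailored to clamped BCs) by $\{\sqrt 2\,\sin(k\pi x)\}_{k\geq 1}$. These are simultaneous eigenfunctions of $-\partial_{xx}$ (with eigenvalue $k^2\pi^2$) and of $\partial_{xxxx}$ (with eigenvalue $k^4\pi^4$) under hinged boundary conditions; in particular the identity $(u_{xxxx},u)_{L^2(\dom)}=\|u_{xx}\|^2_{L^2(\dom)}$ still holds after integrating by parts (the boundary traces vanish thanks to $u=u_{xx}=0$), which supplies the coercive term controlling the $H^2(\dom)$-norm in the a priori estimate.

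With those two single-equation results available, I would define the map $\Phi:(v,\ov v)\mapsto(\widetilde v,\ov{\widetilde v})$ by first solving the $u$-equation with $(v,\ov v)$ frozen inside the coupling terms, and then feeding the resulting $(u,\ov u)$ into the $v$-equation. Applying It\^o's formula to the difference of two successive iterates and using Gronwall's inequality together with the $L^\infty$ bounds on $a_j,b_i$ yields contraction on a short subinterval whose length depends only on those bounds; iterating backward from $t=T$ finitely many times covers $[0,T]$. The global energy estimate would then be obtained by applying It\^o's formula to $\|u(t)\|^2_{L^2(\dom)}+\|v(t)\|^2_{L^2(\dom)}$ on the coupled system on $[t,T]$ (the hinged and Dirichlet conditions kill all boundary contributions during integration by parts), absorbing the cross terms by Young's inequality, and closing via Gronwall. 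The $H^2\times H^1_0$ regularity of $(u,v)$ and the $L^2_\fil(0,T;L^2(\dom))^2$ regularity of the martingale integrands $(\ov u,\ov v)$ follow from the single-equation estimates of the two blocks.

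The main obstacle I anticipate is verifying that the Galerkin argument of \cite[Proposition 2.4]{Gao15} really carries over to hinged BCs. However, this case is structurally simpler than the clamped one: the basis $\{\sin(k\pi x)\}$ simultaneously diagonalizes both $-\partial_{xx}$ and $\partial_{xxxx}$, so the deterministic part of the Galerkin ODEs decouples mode by mode, and all pointwise and integration-by-parts identities needed for the a priori bounds hold without any boundary contribution. The stochastic component of the estimate is handled exactly as in \cite{Gao15} via the Burkholder--Davis--Gundy inequality on the martingale term $\int \ov u\,\d W$, so no genuinely new ideas are required beyond the change of basis and the bookkeeping for the coupling terms.
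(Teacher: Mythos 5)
Your proposal is correct and follows essentially the route the paper intends: the paper states this proposition without a detailed proof, relying (as indicated for the neighbouring propositions) on \cite[Theorem 3.1]{zhou92} for the backward heat equation, on adapting \cite[Proposition 2.4]{Gao15} to hinged boundary conditions by switching the Galerkin basis to the sine eigenfunctions, and on standard energy/integration-by-parts arguments to handle the zero-order couplings. Your added detail (the Picard iteration on short subintervals and the It\^o--Gronwall closure, using that $u=u_{xx}=0$ kills all boundary terms so $(u_{xxxx},u)_{L^2(\dom)}=\|u_{xx}\|^2_{L^2(\dom)}$) fills in exactly what the paper leaves implicit, so there is no substantive divergence or gap.
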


\section{Sketch of the proof of Lemma \ref{lem:local_u3x}}\label{proof_lemma_u3x}
Since most of the arguments are similar to those in Step 3 of the proof of Theorem \ref{thm:car_estimate_KSS}, we proceed briefly. Let us consider an open set $\dom_{2}$ such that $\dom_{3}\subset\subset\dom_{2}\subset\subset\dom_0$ and take $\hat{\eta}\in C^\infty_0(\dom_{2})$ satisfying $\hat\eta\equiv1$ in $\dom_3$. Using It\^{o}'s formula, we compute $\d(\hat{\zeta}u u_{xx})$, where $\hat{\zeta}:=\hat{\eta}\lambda^5\phi_m^5\theta^2$. After a long, but straightforward computation we get
\begin{align}\notag
2\gamma \esp\left(\int_{Q}\hat\zeta|u_{xxx}|^2\dx\dt\right)=&-\gamma\esp\left(\int_Q\left[3\hat\zeta_{xx}u_xu_{xxx}+\hat{\zeta}_{xxx}uu_{xxx}-2\hat{\zeta}_{xx}|u_{xx}|^2\right]\dx\dt\right) \\ \notag
&+\esp\left(\int_{Q}\left[2\hat{\zeta}_xu_xu_{xxx}+\hat{\zeta}_{xx}uu_{xxx}+\hat{\zeta}_{xx}uu_{xx}-\hat{\zeta}_{xx}|u_x|^2\right]\dx\dt\right) \\ \notag
&+\esp\left(\int_{Q}\left[\hat{\zeta}u_x v_{xx}+\hat{\zeta}_x u v_{xx}-\hat{\zeta}_t u u_{xx}-\hat{\zeta}_x|u_{xx}|^2\right]\dx\dt\right) \\ \notag
&+\esp\left(\int_{Q}\left[2\hat{\zeta}|u_{xx}|^2-\hat{\zeta}u_{xx}v_x\right]\dx\dt\right) \\ \label{eq:est_uxxx}
&+\esp\left(\int_{Q}\left[\hat\zeta|(d_1u)_x|^2-\frac{1}{2}\hat{\zeta}_{xx}|d_1 u|^2\right]\dx\dt\right).
\end{align}

Using the definition of $\phi_m$ and $\theta$, we can see that
\begin{gather*}
|\partial_x^{i}(\theta^2\phi_m^p)|\leq C_i \lambda^{i}\phi_m^{p+i}\theta^2, \quad i=1,2,3, \\
|\partial_t(\theta^2\phi_m^p)|\leq C\lambda\phi_m^{p+1+\frac{1}{m}}\theta^2, \quad \forall p\in\mathbb N^*,
\end{gather*}
from which we deduce
\begin{equation}\label{eq:prop_zetahat}
\begin{split}
|\hat{\zeta}_t|&\leq C\lambda^6\phi_m^{6+\frac{1}{m}}\theta^2\eta, \\
|\partial_x^{i}(\hat{\zeta})|&\leq C_i\lambda^{5+i}\phi_m^{5+i}\theta^2\sum_{j=0}^{i}(\partial_x^j\eta), \quad i=1,2,3,\\
\end{split}
\end{equation}
for all $(x,t)\in \dom_{2}\times(0,T)$. 

Using Cauchy-Schwarz and Young inequalities together with \eqref{eq:prop_zetahat} and taking into account the properties of the function $\hat\eta$, we can obtain from \eqref{eq:est_uxxx} the following estimate
\begin{align*}\notag 
\esp&\left(\int_{Q_{\dom_3}}\lambda^5\phi_m^5\theta^2|u_{xxx}|^2\dx\dt\right)\\
&\leq 4\epsilon\esp\left(\int_{Q}\lambda\phi_m|u_{xxx}|^2\dx\dt\right)+2\delta\esp\left(\int_{Q}\lambda\phi_m\theta^2|v_{xx}|^2\dx\dt\right) \\
&+\rho\esp\left(\int_{Q}\lambda^3\phi_m^3\theta^2|v_{x}|^2\dx\dt\right)+C\esp\left(\int_{Q_{\dom_{2}}}\lambda^{13}\phi_m^{13}\theta^2|u_x|^2\dx\dt\right) \\ \notag
&+C\esp\left(\int_{Q_{\dom_{2}}}\lambda^{15}\phi_m^{15}\theta^2|u|^2\dx\dt\right)+C\esp\left(\int_{Q_{\dom_{2}}}\lambda^{7}\phi_m^{7}\theta^2|u_{xx}|^2\dx\dt\right),
\end{align*}
for any positive constants $\epsilon,\delta,\rho$ and where $C>0$ depends on $\|d_1\|_{L^\infty_{\mathcal F}(0,T;W^{2,\infty}(\dom))}$. This concludes the proof.

\bibliographystyle{plain}
\small{\bibliography{bibstoch}}

\end{document}